\documentclass[11pt,reqno]{amsart}

\usepackage[T1]{fontenc}
\usepackage[utf8]{inputenc}
\usepackage{lmodern}
\usepackage{microtype}
\usepackage[a4paper,margin=1in]{geometry}
\usepackage{amsmath,amssymb,amsthm,mathtools}
\usepackage{xcolor}
\usepackage[colorlinks=true,linkcolor=blue,citecolor=blue,urlcolor=blue]{hyperref}
\usepackage[capitalize,nameinlink,noabbrev]{cleveref}

\usepackage{tikz}
\usetikzlibrary{arrows.meta,decorations.pathreplacing,decorations.markings,calc}

\theoremstyle{plain}
\newtheorem{theorem}{Theorem}[section]

\newtheorem{definition}{Definition}[section]
\newtheorem{inputthm}[theorem]{Theorem}
\newtheorem{lemma}[theorem]{Lemma}
\newtheorem{proposition}[theorem]{Proposition}
\newtheorem{corollary}[theorem]{Corollary}

\theoremstyle{remark}
\newtheorem{remark}[theorem]{Remark}

\newcommand{\Z}{\mathbb{Z}}
\newcommand{\R}{\mathbb{R}}

\newcommand{\Prim}{\Z^2_{\mathrm{prim}}}
\newcommand{\ProjPrim}{\mathcal{P}}

\DeclareMathOperator{\height}{ht}
\DeclareMathOperator{\Turn}{Turn}
\DeclareMathOperator{\Len}{Len}
\DeclareMathOperator{\arccosh}{arccosh}

\title[Geometry of McShane--Rivin Norm Balls]{McShane--Rivin Norm Balls and Simple-Length Multiplicities}

\author[N. M. Doan, X. Li, and V. Nguyen]{Nhat Minh Doan}
\address{Institute of Mathematics, Vietnam Academy of Science and Technology, Hanoi, Vietnam}
\email{dnminh@math.ac.vn}
\urladdr{https://sites.google.com/view/dnminh}

\author{Xiaobin Li}
\address{School of Mathematics, Southwest Jiaotong University,
West zone, High-tech district, Chengdu, Sichuan 611756, China}
\email{lixiaobin@home.swjtu.edu.cn}
\author{Van Nguyen}
\address{Institute of Mathematics, Vietnam Academy of Science and Technology, Hanoi, Vietnam}
\email{vannguyen1112203@gmail.com}
\date{}

\subjclass[2020]{Primary 30F60; Secondary 57K20, 52A10, 11H06, 11J06, 11J70.}

\keywords{McShane--Rivin norm, once-punctured torus,
simple length spectrum, Markov numbers, convex lattice counting, exponential Diophantine approximation, continued fractions}

\begin{document}

\begin{abstract}
We use normal-turn estimates to study the global and local geometry of the boundaries of McShane--Rivin norm
balls $B_X$ for complete finite-area hyperbolic once-punctured tori $X$. This yields  a logarithmic-square bound for the number of integer points on the
boundary of each dilated norm ball. Consequently, the number of simple closed
geodesics of length exactly $L\geq 2$ is at most $C_X(\log L)^2$. For the
modular torus, this gives
$$
\#\lambda_M^{-1}(m)\leq C(\log\log(3m))^2
$$
for every Markoff number $m$, improving the previous logarithmic bounds for
Markoff fibers. 

Our second result shows that the boundary $\partial B_X$ is a convex-geometric detector of
exponential Diophantine approximation: a rational direction gives genuine
corner with exponentially small exterior angle in the hyperbolic length of the
corresponding simple closed geodesic,  while at an irrational
direction $\beta$ the graph-flatness order admits an explicit formula in terms of the exponential rate at which
rational directions approach $\beta$ and the $\ell^\infty$-radius of $B_X$ in the projective
direction $\beta$. Thus, irrational directions
are not uniformly flat to infinite order, correcting the
McShane--Rivin local picture. We also determine all possible irrational flatness
orders and the size of the corresponding level sets; in particular, every
intermediate finite-flatness level determines the marked torus.
\end{abstract}

\dedicatory{Dedicated to Greg McShane, with admiration and gratitude, on the occasion of his sixtieth birthday}

\maketitle
\markboth{N. M. DOAN, X. LI, AND V. NGUYEN}{McShane--Rivin Norm Balls and Simple-Length Multiplicities}

\section{Introduction}
\label{sec:introduction}

Let $X$ be a complete finite-area hyperbolic once-punctured torus.
The free homotopy classes of oriented essential simple closed curves on
$X$ are naturally indexed by primitive elements of $H_1(X;\mathbb Z)$.
After choosing an identification
$$
H_1(X;\mathbb Z)\cong \mathbb Z^2,
$$
the unoriented classes are indexed by primitive vectors modulo sign:
$
\mathbb Z^2_{\mathrm{prim}}/\{\pm 1\}=\mathbb P^1(\mathbb Q).
$
Thus a primitive vector $w=(p,q)$ and its negative $-w$ determine the same
unoriented curve. We denote the corresponding geodesic representative by
$\gamma_w$, and its length on $X$ by $\ell_X(\gamma_w)$.

The key insight of McShane--Rivin \cite{McShaneRivinNorm} is that the length function on primitive integral classes extends to a norm. More precisely, there is a norm $\|\cdot\|_X$ on $H_1(X,\mathbb R)$ such that
$$
\|w\|_X=\ell_X(\gamma_w)
$$
for every primitive integral class $w\in H_1(X,\mathbb Z)$. Under the induced identification $H_1(X;\mathbb R)\cong \mathbb R^2$, we view
$\|\cdot\|_X$ as a norm on $\mathbb R^2$. By McShane--Rivin, its unit ball
$$
B_X=\{v\in \mathbb R^2:\|v\|_X\leq 1\}
$$
is a centrally symmetric strictly convex body (see Figure \ref{fig:intro-fricke-mr-ball-dilate}). For $L>0$, we write
$$
LB_X=\{v\in\mathbb R^2:\|v\|_X\leq L\},
\qquad
\partial(LB_X)=L\partial B_X=\{v\in\mathbb R^2:\|v\|_X=L\}.
$$
\begin{figure}[h]
\centering
\begin{tikzpicture}[scale=0.82,>=Latex,font=\small]

\def\UnitScale{0.25}
\def\LengthScale{5.408}
\def\Dil{1.352}

\def\MRpoints{
2.078/0.000,
2.148/0.358,
2.162/0.432,
2.184/0.546,
2.219/0.740,
2.239/0.896,
2.269/1.135,
2.239/1.344,
2.219/1.480,
2.184/1.638,
2.162/1.730,
2.148/1.790,
2.078/2.078,
1.790/2.148,
1.730/2.162,
1.638/2.184,
1.480/2.219,
1.344/2.239,
1.135/2.269,
0.896/2.239,
0.740/2.219,
0.546/2.184,
0.432/2.162,
0.358/2.148,
0.000/2.078,
-0.305/1.832,
-0.358/1.790,
-0.432/1.730,
-0.546/1.638,
-0.628/1.571,
-0.740/1.480,
-0.837/1.395,
-0.896/1.344,
-0.963/1.284,
-1.001/1.251,
-1.025/1.230,
-1.135/1.135,
-1.230/1.025,
-1.251/1.001,
-1.284/0.963,
-1.344/0.896,
-1.395/0.837,
-1.480/0.740,
-1.571/0.628,
-1.638/0.546,
-1.730/0.432,
-1.790/0.358,
-1.832/0.305,
-2.078/0.000,
-2.148/-0.358,
-2.162/-0.432,
-2.184/-0.546,
-2.219/-0.740,
-2.239/-0.896,
-2.269/-1.135,
-2.239/-1.344,
-2.219/-1.480,
-2.184/-1.638,
-2.162/-1.730,
-2.148/-1.790,
-2.078/-2.078,
-1.790/-2.148,
-1.730/-2.162,
-1.638/-2.184,
-1.480/-2.219,
-1.344/-2.239,
-1.135/-2.269,
-0.896/-2.239,
-0.740/-2.219,
-0.546/-2.184,
-0.432/-2.162,
-0.358/-2.148,
0.000/-2.078,
0.305/-1.832,
0.358/-1.790,
0.432/-1.730,
0.546/-1.638,
0.628/-1.571,
0.740/-1.480,
0.837/-1.395,
0.896/-1.344,
0.963/-1.284,
1.001/-1.251,
1.025/-1.230,
1.135/-1.135,
1.230/-1.025,
1.251/-1.001,
1.284/-0.963,
1.344/-0.896,
1.395/-0.837,
1.480/-0.740,
1.571/-0.628,
1.638/-0.546,
1.730/-0.432,
1.790/-0.358,
1.832/-0.305
}

\def\MRpath{
(2.078,0.000) --
(2.148,0.358) --
(2.162,0.432) --
(2.184,0.546) --
(2.219,0.740) --
(2.239,0.896) --
(2.269,1.135) --
(2.239,1.344) --
(2.219,1.480) --
(2.184,1.638) --
(2.162,1.730) --
(2.148,1.790) --
(2.078,2.078) --
(1.790,2.148) --
(1.730,2.162) --
(1.638,2.184) --
(1.480,2.219) --
(1.344,2.239) --
(1.135,2.269) --
(0.896,2.239) --
(0.740,2.219) --
(0.546,2.184) --
(0.432,2.162) --
(0.358,2.148) --
(0.000,2.078) --
(-0.305,1.832) --
(-0.358,1.790) --
(-0.432,1.730) --
(-0.546,1.638) --
(-0.628,1.571) --
(-0.740,1.480) --
(-0.837,1.395) --
(-0.896,1.344) --
(-0.963,1.284) --
(-1.001,1.251) --
(-1.025,1.230) --
(-1.135,1.135) --
(-1.230,1.025) --
(-1.251,1.001) --
(-1.284,0.963) --
(-1.344,0.896) --
(-1.395,0.837) --
(-1.480,0.740) --
(-1.571,0.628) --
(-1.638,0.546) --
(-1.730,0.432) --
(-1.790,0.358) --
(-1.832,0.305) --
(-2.078,0.000) --
(-2.148,-0.358) --
(-2.162,-0.432) --
(-2.184,-0.546) --
(-2.219,-0.740) --
(-2.239,-0.896) --
(-2.269,-1.135) --
(-2.239,-1.344) --
(-2.219,-1.480) --
(-2.184,-1.638) --
(-2.162,-1.730) --
(-2.148,-1.790) --
(-2.078,-2.078) --
(-1.790,-2.148) --
(-1.730,-2.162) --
(-1.638,-2.184) --
(-1.480,-2.219) --
(-1.344,-2.239) --
(-1.135,-2.269) --
(-0.896,-2.239) --
(-0.740,-2.219) --
(-0.546,-2.184) --
(-0.432,-2.162) --
(-0.358,-2.148) --
(0.000,-2.078) --
(0.305,-1.832) --
(0.358,-1.790) --
(0.432,-1.730) --
(0.546,-1.638) --
(0.628,-1.571) --
(0.740,-1.480) --
(0.837,-1.395) --
(0.896,-1.344) --
(0.963,-1.284) --
(1.001,-1.251) --
(1.025,-1.230) --
(1.135,-1.135) --
(1.230,-1.025) --
(1.251,-1.001) --
(1.284,-0.963) --
(1.344,-0.896) --
(1.395,-0.837) --
(1.480,-0.740) --
(1.571,-0.628) --
(1.638,-0.546) --
(1.730,-0.432) --
(1.790,-0.358) --
(1.832,-0.305) --
cycle
}

\def\PrimLattice{
-3/-2,-3/-1,-3/1,-3/2,
-2/-3,-2/-1,-2/1,-2/3,
-1/-3,-1/-2,-1/-1,-1/0,-1/1,-1/2,-1/3,
0/-1,0/1,
1/-3,1/-2,1/-1,1/0,1/1,1/2,1/3,
2/-3,2/-1,2/1,2/3,
3/-2,3/-1,3/1,3/2
}

\def\BoundaryPrimitivePoints{
1/-2,
-1/2,
2/-1,
-2/1,
1/3,
-1/-3,
2/3,
-2/-3,
3/1,
-3/-1,
3/2,
-3/-2
}

\tikzset{
  boundary primitive point/.style={
    circle,
    fill=red!75!black,
    draw=black,
    line width=0.25pt,
    inner sep=1.75pt
  }
}

\newcommand{\visiblelattice}{%
  \foreach \x in {-3,-2,-1,0,1,2,3} {
    \draw[black!42,line width=0.42pt] (\x,-3.25) -- (\x,3.25);
  }
  \foreach \y in {-3,-2,-1,0,1,2,3} {
    \draw[black!42,line width=0.42pt] (-3.25,\y) -- (3.25,\y);
  }
  \foreach \x in {-3,-2,-1,0,1,2,3} {
    \foreach \y in {-3,-2,-1,0,1,2,3} {
      \fill[black!70] (\x,\y) circle (0.75pt);
    }
  }
  \foreach \x/\y in \PrimLattice {
    \fill[red!75!black] (\x,\y) circle (1.15pt);
  }
}

\begin{scope}[xshift=-4.7cm]

\node[font=\bfseries] at (0,4.05) {McShane--Rivin unit ball};

\begin{scope}[scale=\UnitScale]
\path[
  fill=blue!6,
  draw=none,
  line join=round
] \MRpath;
\end{scope}

\visiblelattice

\draw[->,gray!80,line width=0.55pt] (-3.35,0) -- (3.55,0);
\draw[->,gray!80,line width=0.55pt] (0,-3.35) -- (0,3.55);

\begin{scope}[scale=\UnitScale]
\path[
  draw=blue!65!black,
  very thick,
  fill=none,
  line join=round
] \MRpath;
\end{scope}

\foreach \x/\y in \MRpoints {
  \fill[blue!65!black] ({\UnitScale*\x},{\UnitScale*\y}) circle (0.38pt);
}

\node[blue!65!black] at (0,0) {$B_X$};
\node[blue!65!black] at (0.65,-0.62) {$\partial B_X$};

\end{scope}

\begin{scope}[xshift=4.7cm]

\node[font=\bfseries] at (0,4) {Dilated ball};

\begin{scope}[scale=\Dil]
\path[
  fill=orange!8,
  draw=none,
  line join=round
] \MRpath;
\end{scope}

\visiblelattice

\draw[->,gray!80,line width=0.55pt] (-3.35,0) -- (3.55,0);
\draw[->,gray!80,line width=0.55pt] (0,-3.35) -- (0,3.55);

\begin{scope}[scale=\Dil]
\path[
  draw=orange!85!black,
  very thick,
  fill=none,
  line join=round
] \MRpath;
\end{scope}

\foreach \x/\y in \MRpoints {
  \fill[orange!85!black] ({\Dil*\x},{\Dil*\y}) circle (0.35pt);
}

\foreach \x/\y in \BoundaryPrimitivePoints {
  \node[boundary primitive point] at (\x,\y) {};
}

\node[orange!85!black] at (2.45,-1.5) {$L\partial B_X$};

\node[orange!85!black] at (0,0) {$L B_X$};


\end{scope}

\draw[->,very thick,black!55] (-0.95,0) -- (0.95,0)
  node[midway,above] {dilate by $L$};

\end{tikzpicture}
\caption{The McShane--Rivin unit ball $B_X$, where $X$ is the modular torus. The three shortest unoriented primitive classes, represented by $(1,0)$, $(0,1)$, and $(1,1)$, have trace $3$. The dilate $LB_X$, where $L=2\arccosh(15/2)$, has 12 red primitive lattice points on $L\partial B_X$ correspond to $6$ unoriented simple closed geodesics of length $L$ and trace $15$.}

\label{fig:intro-fricke-mr-ball-dilate}
\end{figure}

The dilated balls $LB_X$ naturally encode cumulative simple-length counting. Indeed, define
$$
N_X(L)\overset{\mathrm{def}}{=}
\#\{\gamma:\gamma\text{ is an unoriented simple closed geodesic and }
\ell_X(\gamma)\leq L\},
$$
then estimating $N_X(L)$ is exactly the problem of counting primitive lattice
points in $LB_X$, modulo sign. From this point of view, McShane and Rivin proved the (best-known)
estimate \cite{McShaneRivinNorm,McShaneRivinTori}:
\begin{equation}\label{eq:MRestimate}
    N_X(L)=\frac{3\operatorname{Area}(B_X)}{\pi^2}L^2+O_X(L\log L).
\end{equation}

For general finite-type hyperbolic surfaces $X$, Rivin \cite{Rivin2001} proved the correct polynomial order of growth for $N_X(L)$ by a topological counting
argument based on arc systems and twist parameters. Mirzakhani later proved the precise asymptotic formula for $N_X(L)$ using measured laminations, mapping class group
dynamics, and Weil--Petersson volume recursions arising from generalized
McShane identities \cite{mcshane1998simple,Mirzakhani}. Related counting results and the geodesic-current approach were developed by Erlandsson--Souto \cite{ErlandssonSoutoCounting,ErlandssonSoutoBook}.

\subsection*{Simple-Length Multiplicity.} Beyond cumulative counting, it is natural to ask for the corresponding
boundary-counting problem: how many lattice points lie on $L\partial B_X$ rather
than in $LB_X$ (see Figure \ref{fig:intro-fricke-mr-ball-dilate}). A classical theorem of Jarník \cite{Jarnik} gives the universal bound
$$
\#(\gamma\cap\mathbb Z^2)\leq C\,\Len(\gamma)^{2/3}
$$
for every closed strictly convex plane curve $\gamma$. In particular, for every
fixed such curve $\gamma$, there exists $C_\gamma>0$ such that, for all
$L\geq 1$,
$$
\#(L\gamma\cap\mathbb Z^2)\leq C_\gamma L^{2/3}.
$$
The exponent $2/3$ is optimal for general strictly convex curves. Gaster asked whether the special curve $\partial B_X$ for the modular torus satisfies a stronger estimate \cite{GasterBoundary,BIRSMarkov2025}. Our first main result answers this question in greater generality: for every hyperbolic once-punctured torus $X$, there is a constant $C_X>0$ such that, for all $L\ge 2$,
$$
\#(L\partial B_X\cap\mathbb Z^2)\leq C_X(\log L)^2.
$$
On the other hand, after restricting to primitive lattice points and quotienting by sign, this boundary count is exactly the simple-length multiplicity:
$$
m_X(L)
\overset{\mathrm{def}}{=}
\#\{\gamma:\gamma\text{ is an unoriented simple closed geodesic and }
\ell_X(\gamma)=L\}.
$$

Unlike cumulative counting, simple-length multiplicity on a fixed surface is a finer and substantially harder problem. Schmutz Schaller \cite{SchmutzSchaller} conjectured the sharp uniform bound
$$m_X(L)\le 6,$$ 
for every hyperbolic once-punctured torus $X$ and every length $L>0$. On the modular torus, simple closed geodesics have absolute traces $3m$ and lengths $2\operatorname{arcosh}(3m/2)$, with $m$ a Markoff number; this correspondence was found independently by Gorshkov and Cohn \cite{Gorshkov1981,Cohn1955,Cohn1971,Cohn1972}. Thus Schmutz Schaller's bound is a hyperbolic-geometric extension of the Frobenius unicity conjecture for Markoff numbers \cite{Frobenius1913,Aigner}. McShane and Parlier \cite{McShaneParlier} proved that the simple length spectrum is
Baire-generically simple on the Teichmüller space and showed that a naive one-holed torus analog of the Schmutz Schaller bound fails: along the
symmetric line, a dense set of tori has at least $12$ distinct simple closed
geodesics of the same length. Using stable norms, Makover--Parlier--Sutton proved a position theorem for equal simple lengths on tori \cite{makover2012constructing}, which, combined with the McShane--Rivin cumulative estimate, recovers the Jarn\'ik-scale bound $$m_X(L)\leq C_XL^{2/3}$$ for  once-punctured tori $X$. This remains far from Schmutz Schaller's conjectural
uniform bound $m_X(L)\leq 6$. Our result improves the general bound to
$$
m_X(L)\leq C_X(\log L)^2.
$$

 For general finite-type hyperbolic surfaces, it remains open whether
multiplicities in the simple length spectrum are bounded above by a constant
depending only on the topology of the surface.
This question goes back to Schmutz Schaller's survey \cite{schaller1998geometry}, where the conjecture is attributed to Rivin,
see also \cite{McShaneParlier,parlier2014simple}.
A related bounded-multiplicity
problem for the simple Teichm\"uller length spectrum of moduli space appears
in Farb's problem list \cite[Question~7.7]{farb2006problems}.

 \subsection*{Local boundary geometry.}
The boundary-counting problem above is global, but the local shape of $\partial B_X$ also carries arithmetic information. In the modular case, Aigner formulated monotonicity conjectures for the Markoff ordering \cite{Aigner}, later proved and refined by continued-fraction and cluster-algebra methods \cite{RabideauSchiffler2020,LLRS,LPVT}. Shortly thereafter, McShane gave a unified geometric proof by applying  the convexity of the
norm ball \cite{McShaneAigner}.
Building on this work, Gaster \cite{GasterBoundary} computed the one-sided slopes at the rational
corners of the modular norm ball and used them to give a sharp slope interval for which the Markoff ordering is monotone in the denominator,
confirming the refined conjectures of Lee--Li--Rabideau--Schiffler
\cite{LLRS}. This is the modular model for the normal-turn estimate in this paper.

More generally, for every once-punctured torus $X$, McShane and Rivin stated that rational directions give genuine corners of $\partial B_X$ with exponentially small exterior angles, whereas the boundary points
lying over irrational directions are flat to infinite order \cite{McShaneRivinTori}. Their paper gives a sketch of the
argument, but, as noted in \cite{GasterBoundary,SorrentinoVeselov}, a complete
proof of this full local statement does not seem to have appeared.
Sorrentino--Veselov \cite{SorrentinoVeselov} related this picture to Mather's $\beta$-function and, using
Bangert's theorem 
\cite{bangert1994geodesic}, proved differentiability at irrational directions and
non-differentiability at rational directions. They also asked for more
quantitative information about the rational corners. This leads to a finer
local problem: to estimate the corner sizes and the flatness orders at irrational
directions. 

Our second main result (Theorem~\ref{thm:intro-local-regularity}) addresses
this problem and corrects the irrational part of the McShane--Rivin picture:
boundary points lying over irrational directions are not uniformly flat to
infinite order. Instead, the McShane--Rivin boundary acts as a
convex-geometric detector of exponential rational approximation. In the usual slope coordinate, the relevant arithmetic scale is
$$
\limsup_{j\to\infty}\frac{\log q_{j+1}}{q_j},
$$
where $q_j$ are the denominators of the continued-fraction convergents. We show that
subexponential rational approximation gives infinite flatness, finite
exponential approximation gives finite flatness order, and approximation
faster than every fixed exponential scale gives the least possible irrational
flatness. This is the same exponential-Diophantine threshold that appears in
continued fractions, cusp excursions, homogeneous dynamics, and small-divisor
problems \cite{SeriesModularSurface,SullivanLogLaw,Dani1985,KleinbockMargulis1998,AvilaARC,AvilaGlobal}.

\section*{Result statements}
Our first result concerns the boundary-counting problem and gives an answer to the question of Gaster \cite{GasterBoundary,BIRSMarkov2025}, improving the Jarn\'ik-scale bound of order $L^{2/3}$ obtained from classical convex lattice-point estimates \cite{Jarnik} and from the work of Makover--Parlier--Sutton \cite{makover2012constructing}.
\begin{theorem}
\label{thm:main}
For every complete finite-area hyperbolic once-punctured torus $X$, there is a
constant $C_X>0$ such that for every $L\geq 2$,
$$
\#(L\partial B_X\cap\Z^2)\leq C_X(\log L)^2,
$$
and consequently, $m_X(L) \le C_X(\log L)^2$.
\end{theorem}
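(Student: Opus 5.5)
We will prove the bound by a convex-geometric argument driven by the normal-turn (exterior-angle) estimates at rational corners, which we take as available in the form stated later in the paper. Write $\theta(w)$ for the exterior angle of $\partial B_X$ at the corner in the rational direction $w\in\Prim$. The estimate we use is
$$
\theta(w)\asymp_X e^{-\ell_X(\gamma_w)/2}\asymp_X e^{-c\|w\|}
$$
for some $c>0$.

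\textbf{Reduction.} First reduce to primitive points. A point $v\in L\partial B_X\cap\Z^2$ has the form $v=kw$ with $w$ primitive and $\|w\|_X=L/k$. Hence
$$
\#(L\partial B_X\cap\Z^2)\le\sum_{k\le L/\ell_0} P(L/k),
$$
where $\ell_0$ is the systole and $P(t)$ counts primitive points on $t\partial B_X$. It therefore suffices to show $P(t)\le C\log t$ and to let the sum over $k$ supply the second factor of $\log L$. We will not attempt anything sharper than $\log t$ for $P(t)$.

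\textbf{Counting primitive points on one level.} Fix $t$ and let $w_1,\dots,w_N$ be the primitive points on $t\partial B_X$, listed in cyclic order. Each $w_i$ is a corner, and any two consecutive $w_i,w_{i+1}$ form an integer chord of the strictly convex curve $t\partial B_X$. The key step is a lower bound on the turning of $t\partial B_X$ between consecutive points.

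Because the boundary is strictly convex and there are no lattice points strictly inside the relevant cap, the determinant $\det(w_i,w_{i+1})$ is a positive integer. Consider the arc of $t\partial B_X$ between $w_i$ and $w_{i+1}$. The rational direction $w_i+w_{i+1}$ (up to primitivity) lies on this arc, at height roughly $\|w_i+w_{i+1}\|\le 2t$. More usefully, the Farey/Stern–Brocot structure gives a primitive $u$ of norm $O(t)$ whose direction lies strictly between $w_i$ and $w_{i+1}$.

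Using the turn estimates, we lower-bound the normal turn along this arc by the contribution of the corners whose directions lie between $w_i$ and $w_{i+1}$. The heuristic is that a short rational direction between them contributes turn $\gtrsim e^{-ct'}$ with $t'\lesssim$ its norm. We then combine this with the upper bound on the total turn over the level, which is $2\pi$. A cleaner route is to group the points $w_i$ dyadically by the norm of the smallest intermediate Farey vector. We aim to show that the $i$-th consecutive pair, ordered by the turn it carries, forces turn at least $\gg 1/i^{2}$ or similar. We also aim to show that the smallest Farey vector strictly between consecutive boundary primitive points has norm bounded by $C\log t$ up to a multiplicative factor, so that only $O(\log t)$ "Farey levels" can separate points.

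\textbf{Main obstacle.} The crux is turning the exponential smallness of the angles into a logarithmic count. The plan is to exploit convexity quantitatively through a bound of the form
$$
\det(w_i,w_{i+1})\le t^2\cdot(\text{turn between }w_i,w_{i+1}).
$$
This holds because a chord of length $\asymp t\phi$ in a curve of radius $\asymp t$ spans area $\asymp t^2\phi^2$. Combined with $\det\ge1$, it forces turn $\ge t^{-1}$ only. So we must additionally show that the turn of $\partial B_X$ is concentrated near rational directions of norm $\lesssim\log t$ (by the exponential decay), while away from these the curve is flat to very high order. Flatness beyond that order would force consecutive lattice points to violate $\det\ge1$.

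Concretely, the $O(\log t)$ short rational directions partition the circle into arcs. On each arc between Farey neighbours $a,b$ of norm $\lesssim\log t$, the boundary is within $t\,e^{-c\log t\cdot(\cdot)}$ of a line. So the lattice points on $t\partial B_X$ over that arc lie on a line, up to an error less than $1/|a\wedge b|$, and a strictly convex arc meets a line in at most 2 points. This yields $O(1)$ points per arc and $P(t)=O(\log t)$.

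The delicate part is choosing the cutoff $\asymp\log t$ so that the flatness error beats the lattice spacing along the line. We expect this to need the normal-turn estimate uniformly in the norm.
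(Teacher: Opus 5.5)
\textbf{Verdict: the proposal has genuine gaps.} Its counting skeleton does not give the stated bound, and the key flatness input is deferred rather than proved.

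\textbf{The counting skeleton.} The reduction to primitive points fails. The sum $\#(L\partial B_X\cap\Z^2)=\sum_{k\le L/\ell_0}P(L/k)$ has about $L/\ell_0$ terms. So even granting $P(t)\le C\log t$, you only get a bound of order $L$. Nothing in your argument limits how many $k$ have $L/k$ in the simple length spectrum. The reduction is also unnecessary: your arc argument never uses primitivity, so, as the paper does, you should count all of $L\partial B_X\cap\Z^2$ at once. Moreover, there are $\asymp H^2$ rational directions of height (or norm) at most $H$, not $O(H)$. Your cutoff at $H\asymp\log t$ therefore creates $\asymp(\log t)^2$ arcs, and the best per-level bound your method can give is $O((\log t)^2)$. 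That is already the theorem: the second logarithm comes from the number of height-$H$ Farey gaps, not from a sum over multiples.

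\textbf{The missing flatness input.} Corner-angle estimates control only the atoms of the normal-turn measure. (Incidentally, the paper's corner size is $\height(w)e^{-\|w\|_X}$, not $e^{-\ell_X(\gamma_w)/2}$.) They do not rule out a non-atomic part of the turn measure. Neither does uniqueness of supporting lines at irrational directions, since a $C^1$ convex curve can have singular continuous curvature.

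What you need is that the whole open arc over a height-$H$ Farey gap has turn at most $C_XHe^{-\kappa_XH}$. This is Proposition~\ref{prop:gap-turn}. It is proved by comparing the two one-sided endpoint supporting functionals through the Fricke recurrence along the fans $nu+v$ and $u+nv$ (Proposition~\ref{prop:functional-estimate}). Bowditch's sink (Lemma~\ref{lem:large-root}) guarantees that the gap side carries the larger Fricke root. In the paper even the upper bound on corner angles is deduced from this estimate. So your sentence ``we expect this to need the normal-turn estimate uniformly'' defers the heart of the proof.

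\textbf{The collinearity threshold.} An error less than $1/|\det(a,b)|=1$ from a line does not force collinearity. A strip of width $<1$ about a line of irrational slope contains many non-collinear lattice points. The correct criterion is by area. A dilated gap arc of length $R\le C_Xt$ and turn $\theta<1$ lies in the triangle bounded by its chord and its one-sided endpoint support lines, which has area $\lesssim R^2\theta$. Three non-collinear lattice points span area at least $1/2$. With $\theta\le C_XHe^{-\kappa_XH}$ and $H\ge(2/\kappa_X+\epsilon)\log t$, the lattice points on each gap arc are therefore collinear, hence at most two by strict convexity. Adding the $O(H^2)$ points over low-height directions gives $O((\log L)^2)$.

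\textbf{Comparison with the paper.} Repaired this way, your route is a legitimate and somewhat more elementary alternative to the paper's localized Jarn\'ik estimate (Lemma~\ref{lem:lattice-sector}, combined with H\"older and the summed tail bound of Theorem~\ref{thm:tail-turn}). It needs only the per-gap turn bound. It loses the same factor $\asymp H^2$, coming from the number of gaps, that is identified in Remark~\ref{rem:active-gap-loss}.
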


The key idea is to separate the boundary points lying over rational directions
$\bar w$ of height $\height(\bar w) \le H$ from the rest of the boundary. Here
$\bar w\in \mathbb P^1(\mathbb Q)=\mathbb Z^2_{\mathrm{prim}}/\{\pm 1\}$, and if
$w=(p,q)$ is either primitive representative of $\bar w$, then
$$
\height(\bar w)=\height(w)=\max\{|p|,|q|\}=\|(p,q)\|_{\infty}.
$$
 After removing the low-height directions, we show that the remaining part of $\partial B_X$ has total normal turn at most $C_Xe^{-\kappa_XH}$. This exponential turn decay is the source of the improved count. On an arc with small normal turn, the chords between consecutive lattice points have primitive directions lying in a narrow angular sector; a localized Jarník-type estimate then gives a much sharper bound than the global $L^{2/3}$ theorem. Choosing $H$ of order $\log L$ gives the stated $(\log L)^2$ bound.

Our second result is a sharp local description of $\partial B_X$, refining and correcting the
McShane--Rivin local picture \cite{McShaneRivinTori}.

\begin{theorem}
\label{thm:intro-local-regularity}
The boundary $\partial B_X$ has the following local behavior.

\begin{itemize}
\item Rational directions give genuine corners. More precisely, there are
constants $c_X,C_X>0$ such that, if $w$ is a primitive representative of a
rational direction, then its corner angle $\alpha_X(w)$ satisfies
$$
c_X\height(w)e^{-\|w\|_X}
\leq
\alpha_X(w)
\leq
C_X\height(w)e^{-\|w\|_X}.
$$

\item For every irrational direction $\beta$, the boundary $\partial B_X$ has
a unique supporting line at each of the two antipodal points of
$\beta\cap\partial B_X$.

\item For an irrational direction $\beta$, define
$$
\Omega(\beta)
=
\limsup_{\bar w\to\beta}
\frac{\log(1/d_{\mathbb P^1}(\bar w,\beta))}{\height(\bar w)},
$$
where $\bar w$ ranges over rational directions. Let $\tau_X(\beta)$ be the reciprocal of the $\ell^\infty$-radius of $B_X$ in the projective
direction $\beta$, that is, the common value $1/{\|p\|_\infty}$ for
$p\in\beta\cap\partial B_X$. Then the graph-flatness order
(see Definition \ref{def:graph-flatness}) of $\partial B_X$ at either point of
$\beta\cap\partial B_X$ is
$$
\mathfrak f_X(\beta)
=
\begin{cases}
\infty, & \Omega(\beta)=0,\\
1+\dfrac{\tau_X(\beta)}{\Omega(\beta)}, &
0<\Omega(\beta)<\infty,\\
1, & \Omega(\beta)=\infty.
\end{cases}
$$

\item Set
$$
\mathcal I=\mathbb P^1(\mathbb R)\setminus\mathbb P^1(\mathbb Q),
\qquad
\mathcal F_A=\mathcal F_A(X)=\{\beta\in\mathcal I:\mathfrak f_X(\beta)=A\}.
$$
Then the flatness strata have the following size properties.

\begin{itemize}
\item Every flatness order occurs densely: for every $A\in[1,\infty]$, every
nonempty interval in $\mathcal I$ contains some direction in $\mathcal F_A$.

\item The least-flatness set $\mathcal F_1$ is topologically large but
metrically small: it is a dense $G_\delta$ subset of $\mathcal I$, but has
Hausdorff dimension $0$.

\item The infinite-flatness set $\mathcal F_\infty$ is metrically large but
topologically small: it has full Lebesgue measure, its complement has
Hausdorff dimension $0$, but $\mathcal F_\infty$ is meagre in $\mathcal I$.

\item Every intermediate finite level is small in both senses: for every
$1<A<\infty$, the set $\mathcal F_A$ is meagre in $\mathcal I$ and has
Hausdorff dimension $0$.
\end{itemize}

\end{itemize}
\end{theorem}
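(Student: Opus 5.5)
The plan is to derive every part of Theorem~\ref{thm:intro-local-regularity} from one quantitative input: a normal-turn estimate. We parametrize $\partial B_X$ by the polar angle and write $\Turn$ for the total normal turn, i.e.\ the measure of the Gauss image. The estimate we aim for is that $\partial B_X$ is the closure of a union of genuine corners at the rational points $w/\|w\|_X$. Any arc of $\partial B_X$ over a projective interval $I$ then has normal turn equal to $\sum_{\bar w\in I}\alpha_X(w)$. There is no singular continuous part, because the boundary has no flat segments and no absolutely continuous curvature: the lengths of simple geodesics are extremal in each direction.

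\textbf{Corner angles.} For a rational corner we compute the two one-sided supporting slopes at $w$ from the Farey neighbours $u,v$ of $w$, with $\det(u,w)=\det(w,v)=1$. The left and right derivatives of $\|\cdot\|_X$ at $w$ should be the limits of the difference quotients $\|w\pm n u\|_X$-type sequences, which are controlled by the trace identities $\mathrm{tr}(AB)+\mathrm{tr}(AB^{-1})=\mathrm{tr}A\,\mathrm{tr}B$ (Fricke). Along the Dehn-twist orbit $w^n u$, the trace grows like $\lambda^n$, where $\lambda\approx e^{\ell(w)/2}$. The difference of the two one-sided derivatives is then of order $1/\mathrm{tr}(w)^2\sim e^{-\|w\|_X}$. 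Converting this derivative jump into an angle introduces the factor $\|w\|/\|w\|_X^{\,\cdot}$ of the Euclidean geometry. Measuring in graph coordinates normalised by $\|\cdot\|_\infty$ yields the factor $\height(w)$. This is the modular computation of Gaster, done for general $X$ with constants depending on the compact fundamental domain of the trace triples.

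\textbf{Differentiability and the tail bound.} Uniqueness of supporting lines at irrational $\beta$ follows because $\beta$ is not a corner and $\Turn$ is purely atomic on rational points. The key intermediate bound is
$$\sum_{\height(\bar w)>H}\alpha_X(w)\le C_Xe^{-\kappa_X H}.$$
This follows from the corner estimate and $\|w\|_X\ge c\,\height(w)$, together with the fact that the number of directions of height $h$ is $O(h)$.

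\textbf{Flatness order.} At a point $p\in\beta\cap\partial B_X$ we write $\partial B_X$ locally as a graph $g$ over the tangent line. Then $g(s)=\int_0^s(\text{turn on }[0,t])\,dt$, and the turn on $[0,t]$ is $\sum\alpha_X(w)$ over rational directions within projective distance $\asymp t$ of $\beta$. The lower bound on flatness (smallness of $g$) comes from observing that every $\bar w$ within distance $t$ of $\beta$ has $\height(\bar w)\ge h(t)$, where $h(t)$ is defined through the best-approximation data of $\beta$. This gives turn $\lesssim e^{-\height\cdot\|w\|_X/\height}$. Since $\|w\|_X/\height(w)\to 1/\|p\|_\infty$ along $w\to\beta$, the exponent is $\tau_X(\beta)\height(\bar w)$, and the relation $t\approx e^{-\Omega\height}$ converts this into $t^{\tau/\Omega}$. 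The upper bound on flatness is obtained by taking the sequence of approximants realizing the $\limsup$ in $\Omega(\beta)$ and using the lower corner bound at a single corner. This yields exactly $1+\tau_X/\Omega$ as the graph-flatness order of Definition~\ref{def:graph-flatness}, and the cases $\Omega=0,\infty$ follow by the same estimates. The main obstacle is this step. It requires matching the Euclidean angular distance, the tangent-line parameter $s$, and $d_{\mathbb P^1}$ up to constants. More importantly, it requires proving $\|w\|_X/\height(w)\to\tau_X(\beta)$ uniformly enough that the error is $o(\height)$ in the exponent. Continuity of the norm gives this.

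\textbf{Size of strata.} These are statements about the exponential Diophantine exponent $\Omega$, perturbed by the continuous positive function $\tau_X$. For density, we build continued fractions with partial quotients $a_{j+1}\approx e^{cq_j}$, adjusting $c$ to hit any target $A$; this is possible because $\tau_X$ is continuous, so a fixed-point/intermediate-value choice of $c$ works. We also use that $\Omega=0$ holds whenever $\log a_{j+1}=o(q_j)$, which is Lebesgue-full. The complement of $\mathcal F_\infty$ and every $\mathcal F_A$ with $A<\infty$ require $\log q_{j+1}\ge cq_j$ infinitely often. Covering these sets by intervals of length $q^{-2}e^{-cq}$ around all rationals of height $q$ gives Hausdorff dimension $0$. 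Finally, $\mathcal F_1=\{\Omega=\infty\}$ is a countable intersection of open dense sets, hence a dense $G_\delta$, which makes the other strata meagre.
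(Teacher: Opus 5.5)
Your proposal has a genuine gap: the whole argument rests on the claim that the normal-turn measure of $\partial B_X$ is purely atomic and carried by the rational points $\pm w/\|w\|_X$, so that the turn of any arc is the sum of the corner atoms it contains. You do not prove this, and the reason you give does not work. Strict convexity only says that every nontrivial arc has positive turn; a circle is strictly convex and has no atoms at all. ``No flat segments and no absolutely continuous curvature'' also does not exclude a singular continuous part, and the absence of absolutely continuous curvature is itself unproved.

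Everything after the corner computation depends on this claim:
\begin{itemize}
\item Your tail bound $\sum_{\height(\bar w)>H}\alpha_X(w)\le C_Xe^{-\kappa_XH}$ controls only the atomic part of the tail.
\item Your uniqueness of supporting lines at irrational $\beta$ (``$\beta$ is not a corner'') is circular without it.
\item The lower bound on the flatness order needs an upper bound for $\Turn(\Gamma_\beta(\rho))$ over the whole arc, not just for the atoms inside it.
\end{itemize}
Your corner computation is sound in outline. Done exactly, it even gives both corner bounds, since $\lambda_{w|u}-\lambda_{w|-u}=4\log\coth(\|w\|_X/2)\,\det(w,\cdot)$. But it is local to $w$ and cannot detect turn carried by irrational points.

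The missing idea is an estimate for the turn of the entire open arc over a height-$H$ Farey gap with endpoints $u,v$, $\det(u,v)=1$. By monotonicity of outward normals, this turn is the angle between the one-sided functionals $\lambda_{u|v}$ and $\lambda_{v|u}$. Proposition~\ref{prop:functional-estimate} solves the Fricke recurrence along $nu+v$ and $u+nv$ and shows that these two functionals agree on $u$ and $v$ up to $C_Xe^{-\|u\|_X-\|v\|_X}$; this gives Proposition~\ref{prop:gap-turn}.

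This step needs the gap side to carry the larger Fricke root, $x_{u+v}\ge x_{u-v}$. If $x_{u+v}$ is the smaller root, then for long $u,v$ one gets $\lambda_{u|v}(v)\approx-\|v\|_X$ instead of $\|v\|_X$, and the gap turn is not small. The paper obtains this orientation for every gap with $H\ge H_0$ from Bowditch's sink theorem (Lemma~\ref{lem:large-root}). Summing over the $O(H^2)$ gaps then yields Theorem~\ref{thm:tail-turn}, so atomicity of $\Turn$ is a consequence of the argument rather than an input.

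Once this estimate is in place, your remaining steps follow the paper: the single-corner lower bound for the upper flatness bound, the continued-fraction realization, and the covering and Baire arguments. One correction in the density step: choose each partial quotient adaptively from the current convergent, as in Proposition~\ref{prop:prescribed-flatness-orders}. Fixing $c$ and appealing to an intermediate-value argument does not work, because $\mathfrak b$ depends discontinuously on $c$ through the integer partial quotients.
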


The proof is based on the normal turn of the boundary. Rational directions $\bar w$
produce corner atoms of size comparable to
$
\height(w)e^{-|w|_X}.
$
Near an irrational direction $\beta$, the question is whether nearby rational
corners remain visible after zooming in. Thus, for irrational $\beta$, the flatness order is controlled
by a balance between two quantities: the size of nearby rational corners and
their projective distance to $\beta$.

The intrinsic exponent $\Omega(\beta)$ measures this balance. We show that it has a simple continued-fraction
interpretation in slope coordinates (see Lemma \ref{lem:continued-fractions-slope}). If
$\beta=[(1,\mathfrak b)]$ and $p_j/q_j$ are the continued-fraction convergents
of $\mathfrak b$, then
$$
\Omega(\beta)
=
\frac{1}{\max\{1,|\mathfrak b|\}}
\limsup_{j\to\infty}\frac{\log q_{j+1}}{q_j}.
$$
Thus the flatness order is governed by the largest exponential jumps in the
continued-fraction denominators of the slope. By prescribing these jumps, one
obtains irrational directions with any desired flatness order. The Hausdorff
dimension and category statements then follow from standard covering and
Baire-category arguments.

The borderline case
$\mathfrak f_X(\beta)=1$ occurs when rational directions approach $\beta$
faster than every fixed exponential rate relative to their height. Their corner
angles are small, but their projective distances to $\beta$ can be much
smaller. After rescaling near $p\in\beta\cap\partial B_X$, these nearby corners
remain visible and can make the boundary look almost V-shaped at arbitrarily
fine scales; see the middle panel of
Figure~\ref{fig:corner-like-vs-rational-corners} for an illustration. At the opposite extreme, when $\Omega(\beta)=0$, no
rational directions approach $\beta$ at any fixed exponential rate, and the
boundary is flat to infinite order at $p$; see the rightmost panel of
Figure~\ref{fig:corner-like-vs-rational-corners} for an illustration.

\begin{figure}[!htbp]
\centering
\begin{tikzpicture}[scale=1.0,>=Latex,font=\small]

\tikzset{
  boundary/.style={blue!70!black,very thick,line cap=round,line join=round},
  support/.style={gray!70,dashed,line width=0.55pt},
  corner/.style={orange!85!black,thick},
  point/.style={black,fill=black,circle,inner sep=1.1pt},
  body/.style={blue!6},
  panel title/.style={font=\small\bfseries,align=center}
}

\begin{scope}[xshift=-3.6cm]

\node[panel title] at (0,1.40) {Rational: genuine corner};

\fill[body]
(-1.25,0.95) -- (1.25,0.95) -- (1.25,0.42) --
(0,0) -- (-1.25,0.42) -- cycle;


\draw[boundary] (-1.15,0.38) -- (0,0) -- (1.15,0.38);

\node[point] at (0,0) {};




\end{scope}

\begin{scope}[xshift=1.5cm]

\node[panel title] at (0,1.40) {Irrational: $\mathfrak f_X(\beta)=1$};

\fill[body]
(-1.25,0.95) -- (1.25,0.95) --
(1.25,0.72) --
(0.95,0.38) --
(0.45,0.075) --
(0.15,0.005) --
(0.00,0.000) --
(-0.15,0.005) --
(-0.45,0.075) --
(-0.95,0.38) --
(-1.25,0.72) --
cycle;

\draw[support] (-1.25,0) -- (1.25,0);

\draw[boundary,smooth]
plot coordinates {
(-1.25,0.72)
(-0.95,0.38)
(-0.45,0.075)
(-0.15,0.005)
(0.00,0.000)
(0.15,0.005)
(0.45,0.075)
};
\draw[boundary] (0.45,0.075) -- (0.95,0.38) -- (1.20,0.68);

\node[point] at (0,0) {};






\end{scope}

\begin{scope}[xshift=7cm]

\node[panel title] at (0,1.40) {Irrational: $\mathfrak f_X(\beta)=\infty$};

\fill[body]
(-1.25,0.95) -- (1.25,0.95) --
(1.25,0.25) --
(1.10,0.170) --
(0.75,0.035) --
(0.35,0.002) --
(0,0) --
(-0.35,0.002) --
(-0.75,0.035) --
(-1.10,0.170) --
(-1.25,0.25) --
cycle;

\draw[support] (-1.25,0) -- (1.25,0);

\draw[boundary,smooth]
plot coordinates {
(-1.10,0.170)
(-0.75,0.035)
(-0.35,0.002)
(0,0)
(0.35,0.002)
(0.75,0.035)
(1.10,0.170)
};

\node[point] at (0,0) {};



\end{scope}

\end{tikzpicture}

\caption{A comparison of local boundary behavior in the symmetric cases.}
\label{fig:corner-like-vs-rational-corners}
\end{figure}

\begin{remark}
Figure~\ref{fig:corner-like-vs-rational-corners} illustrates the symmetric cases, in which the two one-sided flatness orders agree.
\begin{itemize}
\item At a general irrational direction, the two one-sided orders need not agree, since rational directions may approach from the left and from the right at different exponential rates. The graph-flatness order is two-sided and is therefore the smaller of the two one-sided orders.

\item At a rational direction, nearby rational directions can approach only polynomially in their height, whereas their corner angles decay exponentially. Consequently, each one-sided branch is flat to infinite order relative to its own tangent line, although the boundary itself has a genuine corner. This behavior is illustrated in the leftmost panel of Figure~\ref{fig:corner-like-vs-rational-corners}.
\end{itemize}
\end{remark}

As a by-product of the exact flatness formula in Theorem \ref{thm:intro-local-regularity}, a dense overlap of two intermediate
finite-flatness levels for two marked tori forces the two marked tori to be
equal.
\begin{corollary}
\label{cor:intro-finite-flatness-level-determines-marked-torus}
Let $A,B\in(1,\infty)$, and let $X,Y$ be marked complete finite-area hyperbolic
once-punctured tori. If
$$
\mathcal F_A(X)\cap\mathcal F_B(Y)
$$
is dense in $\mathcal I$, then $A=B$,
$
\|\cdot\|_X=\|\cdot\|_Y,
$
and $X=Y$ as marked hyperbolic tori. In particular, for every fixed
$A\in(1,\infty)$, the level set $\mathcal F_A(X)$ determines the marked torus
$X$.
\end{corollary}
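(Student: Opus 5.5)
The plan is to read off the ratio $\tau_X/\Omega$ from the flatness formula of Theorem~\ref{thm:intro-local-regularity}, and to use density of the level sets to recover $\tau_X$ everywhere. Suppose $\beta\in\mathcal F_A(X)\cap\mathcal F_B(Y)$. Since $1<A<\infty$, the formula forces $0<\Omega(\beta)<\infty$ and
$$
A-1=\frac{\tau_X(\beta)}{\Omega(\beta)},\qquad B-1=\frac{\tau_Y(\beta)}{\Omega(\beta)}.
$$
The key point is that $\Omega(\beta)$ is intrinsic: it depends only on $\beta$ and the identification $H_1\cong\Z^2$, not on the metric. Dividing the two identities gives
$$
\frac{\tau_X(\beta)}{\tau_Y(\beta)}=\frac{A-1}{B-1}=:\lambda,
$$
a constant, on the dense set $D=\mathcal F_A(X)\cap\mathcal F_B(Y)$.

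Next I extend this identity to all directions. The function $\tau_X(\beta)=1/\|p\|_\infty$ with $p\in\beta\cap\partial B_X$ equals $\|u\|_X$ for $u$ a unit $\ell^\infty$ vector in direction $\beta$. Hence $\tau_X$ is a continuous positive function on $\mathbb P^1(\mathbb R)$, and so is $\tau_Y$. Continuity and density of $D$ give $\tau_X=\lambda\tau_Y$ on all of $\mathbb P^1(\mathbb R)$. By homogeneity this yields $\|v\|_X=\lambda\|v\|_Y$ for every $v\in\R^2$.

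It remains to show $\lambda=1$. I would try the rational corner asymptotics first. By the first bullet of Theorem~\ref{thm:intro-local-regularity}, $\alpha_X(w)\asymp\height(w)e^{-\|w\|_X}$. The corner angle of a norm ball is invariant under scaling, so $\alpha_X(w)=\alpha_Y(w)$ for every primitive $w$. This gives
$$
e^{-\lambda\|w\|_Y}\asymp e^{-\|w\|_Y}
$$
with uniform constants. Since $\|w\|_Y\to\infty$ along primitive $w$, this forces $\lambda=1$. As a backup, one can use the McShane identity $\sum_\gamma (1+e^{\ell(\gamma)})^{-1}=1/2$ applied to both $X$ and $Y$: the sum is strictly monotone in $\lambda$, which again gives $\lambda=1$. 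Then $A=B$ and $\|\cdot\|_X=\|\cdot\|_Y$.

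Finally, I recover the marked torus from its simple length function. Equality of norms means $\ell_X(\gamma_w)=\ell_Y(\gamma_w)$ for all primitive $w$. In Fricke coordinates, the traces of $(1,0),(0,1),(1,1)$, namely $2\cosh(\ell/2)$, determine the marked once-punctured torus. Hence $X=Y$.

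For the last claim, take $Y$ with $\mathcal F_A(Y)=\mathcal F_A(X)$. By the density bullet of Theorem~\ref{thm:intro-local-regularity}, this set is dense, so the main statement applies with $B=A$ and gives $X=Y$.

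The main obstacle is the step ruling out $\lambda\neq1$. It relies on the corner bound having constants that do not absorb an exponential scaling, which holds because the error is only a multiplicative constant. The other delicate point is confirming that $\Omega$ is truly metric-independent, which is clear from its definition via $d_{\mathbb P^1}$ and height.
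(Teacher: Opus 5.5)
Your proof is correct and follows the paper's argument essentially step for step. The exact flatness formula forces $0<\Omega(\beta)<\infty$ and gives $\tau_X=c\,\tau_Y$ on the dense overlap, with $c=(A-1)/(B-1)$ (your $\lambda$). Continuity of $\tau_X,\tau_Y$ then upgrades this to $\|\cdot\|_X=c\|\cdot\|_Y$. Finally, the Fricke trace triple $(x_a,x_b,x_{a+b})$ determines the marked torus.

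The only real divergence is how you rule out $c\neq1$.

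\begin{itemize}
\item The paper uses McShane's identity $\sum_\gamma(1+e^{\ell(\gamma)})^{-1}=1/2$ together with strict monotonicity in $c$. This is exactly your backup argument.
\item Your primary argument combines two facts. Exterior angles are invariant under dilation, so $\alpha_X(w)=\alpha_Y(w)$. The two-sided estimate of Proposition~\ref{prop:rational-corner-atoms} then applies to both tori. Since $c$ enters through the exponent $e^{-c\|w\|_Y}$ while the errors are only multiplicative constants, $e^{|c-1|\,\|w\|_Y}$ would be bounded over all primitive $w$. This forces $c=1$.
\end{itemize}

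Your argument is valid. Its advantage is that it stays internal to the paper, reusing the corner estimate instead of importing McShane's identity. The paper's route, by contrast, does not depend on the corner constants and needs only monotonicity of a single series.

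You also justify the ``in particular'' clause explicitly, using the density of $\mathcal F_A(X)$ from Proposition~\ref{prop:prescribed-flatness-orders}. The paper leaves this step implicit.
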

\subsection*{Markoff-fiber estimates}
On the modular torus, simple closed geodesics have absolute traces $3m$ and lengths $L_m=2\operatorname{arcosh}(3m/2)$, with $m$ a Markoff number. We normalize the Markoff fiber $\#\lambda_M^{-1}(m)$ as the number of primitive homology classes modulo sign, equivalently the number of unoriented simple closed geodesics of length $L_m$. This is the all-slope convention; Gaster's sector convention \cite{GasterBoundary} differs from it by at most the six images of a standard sector under the finite symmetry group of $M$. Applying
Theorem \ref{thm:main} with $L=L_m$ gives:

\begin{corollary}
\label{cor:markoff-fibers}
There is a constant $C>0$ such that for every Markoff number $m$,
$$
\#\lambda_M^{-1}(m)\leq C(\log\log(3m))^2.
$$
\end{corollary}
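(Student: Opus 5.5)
The plan is to deduce Corollary~\ref{cor:markoff-fibers} directly from Theorem~\ref{thm:main} applied to the modular torus $X=M$ with $L=L_m=2\arccosh(3m/2)$. By the Gorshkov--Cohn correspondence recalled above, an unoriented simple closed geodesic on $M$ has length $L_m$ exactly when its absolute trace is $3m$. Hence $\#\lambda_M^{-1}(m)$, in the all-slope normalization, equals $m_M(L_m)$. This in turn equals half the number of primitive lattice points on $L_m\partial B_M$, and is therefore at most $\#(L_m\partial B_M\cap\Z^2)$.

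Next I check that $L_m\ge 2$, so that the theorem applies. The smallest Markoff number is $m=1$, which gives $L_1=2\arccosh(3/2)\approx 1.92$; this is slightly below $2$. For every Markoff number $m\ge 2$, we have $L_m\ge 2\arccosh(3)>2$. For $m=1$, the fiber is finite: it consists of the three classes $(1,0),(0,1),(1,1)$, as in Figure~\ref{fig:intro-fricke-mr-ball-dilate}. That case is absorbed by enlarging the constant, since $\log\log 3>0$.

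For $m\ge 2$, Theorem~\ref{thm:main} gives $\#\lambda_M^{-1}(m)\le C_M(\log L_m)^2$. It remains to compare $\log L_m$ with $\log\log(3m)$. Using $\arccosh x=\log\bigl(x+\sqrt{x^2-1}\bigr)\le\log(2x)$, we get $L_m\le 2\log(3m)$. Therefore
$$\log L_m\le \log 2+\log\log(3m)\le C'\log\log(3m),$$
where the last inequality holds uniformly because $\log\log(3m)\ge\log\log 6>0$ for $m\ge2$. Squaring and adjusting constants then yields the bound.

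The only real content lies in Theorem~\ref{thm:main}. The remaining points of care are the small-$m$ boundary case and making sure the fiber convention matches the unoriented count $m_M(L)$; neither is a serious obstacle.
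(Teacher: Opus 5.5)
Your proposal is correct and follows the paper's argument, which simply applies Theorem~\ref{thm:main} with $L=L_m=2\arccosh(3m/2)$. You also supply two details the paper leaves implicit: the case $m=1$, where $L_1=2\arccosh(3/2)<2$, is absorbed into the constant, and the comparison $\log L_m\le\log 2+\log\log(3m)\le C'\log\log(3m)$ holds for $m\ge 2$.
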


For context, the conjectural benchmark for Corollary \ref{cor:markoff-fibers} is the long-standing Frobenius unicity conjecture for Markoff numbers \cite{Frobenius1913,Aigner}.
 It concerns positive
integer solutions $(a,b,c)$ of the Markoff equation
$$
a^2+b^2+c^2=3abc.
$$
The conjecture asserts that a Markoff triple $(a,b,c)$ with $a\leq b\leq c$ is
determined by its largest entry $c$. By the modular-torus correspondence
recalled above, this is equivalent to the expected uniform bound
$
\#\lambda_M^{-1}(m)\leq 6
$
in our geometric convention. Many arithmetic partial results toward the Frobenius unicity conjecture are known: Button \cite{button2001markoff} proved uniqueness for largest Markoff numbers $m=kp^\ell$ with $1\leq k\leq 10^{35}$, while Chen--Chen \cite{ChenChen2013} proved it when either $3m-2$ or $3m+2$ equals $kp^\ell$ with $1\leq k\leq 10^{10}$, where $p$ is prime and $\ell\geq 1$; see also \cite{Baragar1996,Button1998,Schmutz1996,LangTan,Zhang2007}. Button's family is
arithmetically sparse in the Markoff multiset $M^s$: Bourgain--Gamburd--Sarnak
proved that, for every fixed $\nu\geq 1$, almost every element of $M^s$ has more
than $\nu$ distinct prime factors \cite{bourgain2026strong}.
 For a recent overview of progress, open problems, and possible directions, see the BIRS workshop report \cite{BIRSMarkov2025}.

Our result gives a growing upper bound, still far from the conjectural uniform
bound. Nevertheless, it improves the previously known general estimates.
As noted by Gaster \cite[Remark 1.13]{GasterBoundary}, Zagier's
tree analysis \cite{Zagier1982} gives the logarithmic fiber bound
$$
\#\lambda_M^{-1}(m)=O(\log m).
$$
Another logarithmic bound follows from Lee--Li--Rabideau--Schiffler
\cite[Corollary 1.5(b)]{LLRS}. The error term in the McShane--Rivin
cumulative counting in Equation \eqref{eq:MRestimate} for the modular torus gives a weaker estimate
$$
\#\lambda_M^{-1}(m)=O(\log m\log\log m).
$$
Our Corollary \ref{cor:markoff-fibers} gives a much smaller estimate. In particular, the Markoff-fiber size is subpolynomial in $\log m$: for every
$\epsilon>0$,
$
\#\lambda_M^{-1}(m)=O_{\epsilon}((\log m)^\epsilon).
$

\section*{Organization of the paper}
The exponential normal-turn tail is the common technical input behind both
main results. Section~\ref{sec:preliminaries} collects the preliminary material
used throughout the paper: the McShane--Rivin norm, the height convention, the
Farey-tree notation, the Fricke trace relation, Bowditch's sink theorem, the
height-gap decomposition, and the convex-geometric language of supporting
functionals and normal turn. Section~\ref{sec:one-gap-turn} proves the
exponential tail estimate for the normal turn away from low-height rational
directions. Section~\ref{sec:lattice-count} combines this tail estimate with a
localized Jarník-type lattice count and proves the global counting theorem.
Section~\ref{sec:local-regularity} uses the same turn estimate locally, through
the rational corner atoms, to prove the exact irrational flatness formula and
its rigidity consequence. The last section discusses further directions and
generalizations.

\section*{Acknowledgements}

The first-named author thanks Ser Peow Tan, Dídac Martínez-Granado, and Quang-Khai Nguyen for fruitful discussions. The later parts of the local-boundary section grew especially from discussions with Quang-Khai Nguyen and observations suggested by Ser Peow Tan during the first-named author’s visit to the National University of Singapore (NUS). He is extremely grateful to the Department of Mathematics at NUS for its hospitality, support, and stimulating research environment during his visits and 2-year postdoctoral stay. He thanks the organizers of the BIRS workshop \emph{Perspectives on Markov Numbers} for their invitation and support, despite his inability to attend due to visa issues. He also acknowledges support from the China Exchange Program (JC202512007F) for his visit to the second-named author at Southwest Jiaotong University and thanks its Department of Mathematics for their warm hospitality. His research was partially supported by the Singapore National Research Foundation (NRF) under grant E-146-00-0029-01.

The first two authors also thank the organizers of
\emph{Quantum Topology and Hyperbolic Geometry} in Phu Quoc, Vietnam, and
\emph{Teichm\"uller Theory and Related Topics} in Hefei, China, where they first met and discussed this project.

\section{Preliminaries}
\label{sec:preliminaries}

This section fixes the notation and standard inputs used throughout the paper.
We first set up primitive homology classes, height, and the McShane--Rivin
norm. We then recall the Farey-tree model, the Fricke trace relation,
Bowditch's sink theorem, and the height-gap decomposition. Finally, we introduce
the convex-geometric notation for supporting functionals, one-sided support
directions, and normal turn of the boundary of the McShane--Rivin unit ball.

\textbf{Convention.} 
Let $X$ be a complete finite-area hyperbolic once-punctured torus. Throughout the paper, we fix an integral basis of $H_1(X;\mathbb Z)$ and use it
to identify
$$
H_1(X;\mathbb Z)\cong \mathbb Z^2,
\qquad
H_1(X;\mathbb R)\cong \mathbb R^2.
$$

An (irrational or rational) direction means an unoriented
line through the origin in $H_1(X;\mathbb R)$, equivalently, an element of
$\mathbb P^1(\mathbb R)$. A direction is rational if it lies in $\mathbb P^1(\mathbb Q)$,
and irrational otherwise.

Constants denoted by $C_X,c_X,\kappa_X$, and similar symbols may change from
line to line and depend only on $X$, after the above background choices have
been fixed. Constants denoted by $C$ are absolute.

\subsection{Primitive classes, height, and the McShane--Rivin norm}

With the above identification, let
$$
\Prim \overset{\mathrm{def}}{=} \{(p,q)\in\mathbb Z^2:\gcd(p,q)=1\}.
$$
For $w=(p,q)\in\mathbb Z^2$, define
$$
\height(w) \overset{\mathrm{def}}{=} \max\{|p|,|q|\}.
$$
Thus $\height(\bar w)$ is well-defined for $\bar w\in\Prim/\{\pm 1\}=\mathbb P^1(\mathbb Q)$, since $w$ and $-w$ have the same height.

Primitive homology classes modulo sign correspond to unoriented simple closed
curves, up to free homotopy, on the once-punctured torus. We denote by $\gamma_w$ the closed geodesic
associated to the pair $\{w,-w\}$.

We need the following theorem of McShane--Rivin
\cite{McShaneRivinTori,McShaneRivinNorm}. It says that the hyperbolic length
of a simple closed geodesic is the value of a genuine norm on homology.

\begin{inputthm}[McShane--Rivin length norm]
\label{thm:mr}
There is a norm
$$
\|\cdot\|_X:H_1(X;\R)\to\R_{\geq 0}
$$
such that, for every $w\in\Prim$,
$$
\|w\|_X=\ell_X(\gamma_w).
$$
Its unit ball
$$
B_X  \overset{\mathrm{def}}{=} \{v\in\R^2:\|v\|_X\leq 1\}
$$
is compact, centrally symmetric, and strictly convex. Equivalently,
$$
\partial B_X \overset{\mathrm{def}}{=}\{v\in\R^2:\|v\|_X=1\}
$$
is a compact convex curve with no nontrivial line segment.
\end{inputthm}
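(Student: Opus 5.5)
The plan is to realize $\|\cdot\|_X$ as a \emph{stable-norm-type} functional on homology and to read off both the norm axioms and strict convexity from cut-and-paste arguments for curves on $X$. For $v\in\Z^2\cong H_1(X;\Z)$, define $f(v)$ to be the infimum of the total hyperbolic length of integral $1$-cycles (finite unions of oriented closed curves) on $X$ representing $v$. Two properties are immediate. First, $f$ is subadditive: take the disjoint union of cycles. Second, $f$ is finite. The main steps are then, in order: compute $f$ on multiples of primitive classes, extend to a norm on $\R^2$, and prove strict convexity.

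\textbf{Step 1 (key step): $f(kw)=k\,\ell_X(\gamma_w)$ for $w\in\Prim$ and $k\ge 1$.} This is where I expect the main work to lie.
\begin{itemize}
\item \emph{Existence of a minimizer.} Curves entering deep into the cusp can be shortened, since a horocyclic excursion can be replaced by a shorter path. Hence minimizing sequences stay in a compact core, and a minimizer exists. It is a union of closed geodesics.
\item \emph{Simplicity.} If the minimizer has a transverse self-intersection or an intersection between components, perform the oriented smoothing there. This preserves the homology class and, after geodesic straightening, strictly decreases length. So a minimizer is a disjoint union of simple closed geodesics.
\item \emph{Identifying the components.} On a once-punctured torus, disjoint essential non-peripheral simple closed geodesics all lie in a single free homotopy class. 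So they are parallel copies of one geodesic $\gamma_u$, carrying orientations $\pm u$. Peripheral curves are null-homologous and only add length, so they are discarded.
\item \emph{Signs.} Oppositely oriented copies cancel in homology and can be removed, shortening the cycle. So all copies carry the same sign, which forces $u=w$ and exactly $k$ copies.
\end{itemize}

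\textbf{Step 2: extension to a norm.} By Step 1, $f$ is positively homogeneous on $\Z^2$. Extend it to $\mathbb Q^2$ by $f(v/n)=f(v)/n$; subadditivity and homogeneity persist. The bound $f(v)\le |a|f(e_1)+|b|f(e_2)$ for $v=(a,b)$ makes $f$ Lipschitz, so it extends uniquely to a convex, positively homogeneous, symmetric function on $\R^2$.

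For definiteness, fix two simple closed geodesics $\alpha_1,\alpha_2$ forming a homology basis. A closed curve of class $(a,b)$ crosses $\alpha_2$ at least $|a|$ times and $\alpha_1$ at least $|b|$ times. Consecutive crossings of a fixed closed geodesic are separated by length at least a positive constant (collar lemma). Hence $f(v)\ge c_X\|v\|_\infty$ on $\Z^2$, and by homogeneity and continuity on $\R^2$. This gives a norm with $\|w\|_X=\ell_X(\gamma_w)$ on $\Prim$, and $B_X$ is compact, convex and centrally symmetric.

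\textbf{Step 3: strict convexity.} Suppose $\partial B_X$ contained a nontrivial segment $I$, with supporting linear functional $\phi$ satisfying $\phi=\|\cdot\|_X$ on the cone over $I$.
\begin{itemize}
\item The directions of $I$ form a nondegenerate interval in $\mathbb P^1(\R)$. Every such interval contains a Farey pair: primitive $u,v$ with $\det(u,v)=\pm1$, both in the cone, and then $u+v$ also lies in the cone.
\item Linearity of $\phi$ on the cone gives $\|u+v\|_X=\|u\|_X+\|v\|_X$.
\item On the other hand, $\gamma_u$ and $\gamma_v$ intersect exactly once. Smoothing that intersection yields a closed curve in class $u+v$ with length at most $\ell(\gamma_u)+\ell(\gamma_v)$. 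It has a genuine corner, so it is not geodesic, and the inequality is strict after straightening. By Step 1, $\|u+v\|_X=\ell(\gamma_{u+v})<\|u\|_X+\|v\|_X$.
\end{itemize}
This contradiction shows that $\partial B_X$ contains no segment.

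Alternatively, the strict inequality in the last step can be checked algebraically from the trace relation $\operatorname{tr}(AB)+\operatorname{tr}(AB^{-1})=\operatorname{tr}A\operatorname{tr}B$ with all traces exceeding $2$.
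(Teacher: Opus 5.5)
The paper gives no proof of this statement. It is imported as an input theorem from McShane--Rivin, and the later sections use only its conclusion, together with the Fricke--Bowditch input. Your proposal is a correct, self-contained proof along the standard stable-norm route.

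\begin{itemize}
\item Defining the norm as the least length of an integral cycle makes subadditivity automatic.
\item The cut-and-paste facts give $f(kw)=k\,\ell_X(\gamma_w)$: a minimizer is a simple multicurve, and on a once-punctured torus a simple multicurve consists of parallel copies of one curve.
\item Strict convexity reduces to the strict triangle inequality $\ell_X(\gamma_{u+v})<\ell_X(\gamma_u)+\ell_X(\gamma_v)$ for a Farey pair in the cone over a putative segment, and your corner-straightening argument proves this inequality. Such a pair exists, e.g.\ the endpoints of a height-$H$ Farey gap lying inside that cone for large $H$, by Lemma~\ref{lem:farey-gap}. For the straightening you only need $\gamma_u$ and $\gamma_v$ to cross somewhere, which $\det(u,v)=\pm1$ forces.
\end{itemize}

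Compared with the paper's black-box use, this makes visible that strict convexity is exactly strictness of the triangle inequality on Farey pairs.

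Two things to tidy.

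\textbf{The crossing bound and the minimizer.} The crossing bound in Step~2 is valid for closed geodesics: there, an arc between consecutive crossings of $\alpha_i$ must leave the collar. It fails for arbitrary curves, which can cross $\alpha_i$ and immediately cross back, so straighten first. Likewise, existence of a minimizer is cleaner than the cusp-excursion remark suggests. After straightening and deleting null-homologous components, a cycle is a multiset of closed geodesics, each of length at least the systole. Only finitely many closed geodesics have length below any given bound, so the infimum is attained.

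\textbf{The algebraic alternative in Step~3 does not work as stated.} Write $\ell_u=\ell_X(\gamma_u)$. The sum relation $x_{u+v}+x_{u-v}=x_ux_v$ together with $x_{u-v}>2$ only gives
$x_{u+v}<x_ux_v-2=2\cosh\frac{\ell_u+\ell_v}{2}+2\cosh\frac{\ell_u-\ell_v}{2}-2$.
This does not imply $x_{u+v}<2\cosh\frac{\ell_u+\ell_v}{2}$ when $\ell_u\neq\ell_v$. Indeed, the sum relation holds for every pair in $\mathrm{SL}_2(\R)$ and does not detect that the axes cross.

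What is needed is the product relation $x_{u+v}x_{u-v}=x_u^2+x_v^2$. Set $c=2\cosh\frac{\ell_u-\ell_v}{2}$. The product relation yields $(x_{u+v}-c)(x_{u-v}-c)=4$, which is the identity already used in the proof of Proposition~\ref{prop:functional-estimate}. The two factors have positive sum $x_ux_v-2c$, so both are positive. Hence $x_{u+v}=x_ux_v-x_{u-v}<x_ux_v-c=2\cosh\frac{\ell_u+\ell_v}{2}$.
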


\begin{remark}
Strict convexity is essential for boundary lattice counting: a nontrivial
boundary segment with rational endpoints would give suitable dilates containing
linearly many primitive lattice points.
\end{remark}

We shall repeatedly use the following comparison between height and the McShane--Rivin
norm. Since
$$
\height(v)=\|v\|_\infty=\max\{|v_1|,|v_2|\}
$$
on $\mathbb Z^2$, and all norms on $\mathbb R^2$ are comparable, there are
constants $c_X,C_X>0$ such that
\begin{equation}\label{eq:NormComparison}
    c_X\height(v)\leq \|v\|_X\leq C_X\height(v)
\end{equation}
for every $v\in\mathbb Z^2$.

\subsection{Trace labels and the Farey tree}

For $w\in\Prim$, set
$$
x_w  \overset{\mathrm{def}}{=} 2\cosh\frac{\|w\|_X}{2}.
$$
Since $x_w=x_{-w}$, this gives a well-defined trace label $x_{\bar w}$ on each
projective primitive class $\bar w\in\Prim/\{\pm 1\}$.

We use the Farey graph whose vertices are the elements of $\Prim/\{\pm 1\}$. Two
vertices are joined when they have representatives $u,v\in\Prim$ with
$
\det(u,v)=1.
$

Let $\mathcal T$ be the trivalent tree dual to the Farey graph. Its
complementary regions are naturally indexed by $\Prim/\{\pm 1\}$. We write
$R_{\bar w}$ for the complementary region labelled by $\bar w$; after choosing
a representative $w\in\Prim$, we also write
$$
R_w=R_{\bar w}.
$$
Thus $R_w=R_{-w}$. The region $R_w$ carries the trace label $x_w$.

A vertex of $\mathcal T$ is incident to three complementary regions. These
three region labels form a Farey triangle. An edge $e$ of $\mathcal T$ joins
two vertices whose Farey triangles share two labels. More concretely, if the
shared labels have representatives $u,v\in\Prim$ with
$
\det(u,v)=1,
$
then the two endpoints of $e$ are incident to
$
\{R_u,R_v,R_{u+v}\}
$
and
$
\{R_u,R_v,R_{u-v}\},
$
respectively. We call $u+v$ and $u-v$ the two third labels across $e$; their
trace labels are $x_{u+v}$ and $x_{u-v}$.

The following combines the Fricke trace identity with Bowditch's sink theorem
for positive Fuchsian Markoff maps \cite{BowditchMcShane,Bowditch}.

\begin{inputthm}[Fricke identity and Bowditch sink]
\label{thm:fricke-bowditch}
The trace labels $x_w$ have the following two properties.

\begin{itemize}
\item If $u,v\in\Prim$ satisfy $\det(u,v)=1$, then the two third trace labels
across the corresponding edge of $\mathcal T$ are $x_{u+v}$ and $x_{u-v}$.
They are the two roots of
$$
Z^2-x_ux_vZ+x_u^2+x_v^2=0.
$$
Equivalently,
$$
x_{u+v}+x_{u-v}=x_ux_v,
\qquad
x_{u+v}x_{u-v}=x_u^2+x_v^2.
$$
\item Orient each edge of $\mathcal T$ toward the endpoint whose third trace
label is smaller. If the two third trace labels are equal, orient the edge
both ways. Since the labels come from a positive Fuchsian once-punctured-torus
representation, Bowditch's sink theorem gives a vertex
$$
s_X\in\mathcal T
$$
such that every edge incident to $s_X$ has at least one orientation pointing
toward $s_X$, and every edge not incident to $s_X$ has at least one orientation
pointing toward $s_X$ along the unique path to $s_X$.
\end{itemize}
\end{inputthm}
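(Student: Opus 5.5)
The plan is to derive both parts from the trace identities satisfied by a lift of the holonomy representation to $\mathrm{SL}(2,\R)$, and then to run Bowditch's combinatorial argument on the Farey tree.

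\textbf{The Fricke identity.} Let $\rho:\pi_1(X)\to \mathrm{PSL}(2,\R)$ be the holonomy, and let $A,B$ be generators dual to a basis $u,v\in\Prim$ with $\det(u,v)=1$. Such $u,v$ are represented by simple closed curves meeting exactly once, so $A,B$ freely generate $\pi_1(X)$. The unique simple curves in the classes $\pm(u+v)$ and $\pm(u-v)$ are represented by $AB$ and $AB^{-1}$, after choosing the generators suitably.

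Lift to $\mathrm{SL}(2,\R)$ so that $\operatorname{tr}A,\operatorname{tr}B>0$. The puncture is parabolic with the sign forced by the lift, which gives $\operatorname{tr}[A,B]=-2$. Fricke's identity
$$
\operatorname{tr}[A,B]=x^2+y^2+z^2-xyz-2
$$
then yields the Markoff equation $x^2+y^2+z^2=xyz$, where $x=\operatorname{tr}A$, $y=\operatorname{tr}B$ and $z=\operatorname{tr}AB$. The identity $\operatorname{tr}AB+\operatorname{tr}AB^{-1}=\operatorname{tr}A\operatorname{tr}B$ shows that $z'=\operatorname{tr}AB^{-1}$ is the other root of $Z^2-xyZ+x^2+y^2=0$.

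It remains to show that all these traces are positive, so that $|\operatorname{tr}|=2\cosh(\ell/2)$ agrees with $x_w$. Both roots have positive product $x^2+y^2$ and positive sum $xy$, so both are positive once $x,y>0$. Propagating this along the tree shows the lift is a positive Markoff map, and hence $x_{u\pm v}$ are exactly the two roots. Vieta's formulas give the two stated relations.

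\textbf{Bowditch's sink.} Here all labels satisfy $x_w>2$, and the set $\{w : x_w\le K\}$ is finite for every $K$, since there are only finitely many simple geodesics of bounded length.

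1. First I show there is no vertex with two strictly outgoing edges (no \emph{fork}). Let the vertex have labels $x,y,z$, and suppose the edges opposite $x$ and $y$ both point away from it. Then $x'\ge x$ and $y'\ge y$, where $x'=yz-x$ and $y'=xz-y$. Adding these gives $z(x+y)\ge 2(x+y)$, and a sharpened version of this estimate using the Markoff equation forces one of the labels to be $\le 2$. This contradicts $x_w>2$. Equality cases correspond to the two-way edges and are handled the same way.

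2. Next, I show that following arrows from any vertex terminates. Along a directed path the newly created labels strictly decrease (or are tied). The path cannot run forever along the boundary of a single region $R_u$. Along such a boundary the neighbouring labels satisfy the linear recurrence $y_{n+1}=x_uy_n-y_{n-1}$ with $x_u>2$. Its solutions grow exponentially in at least one direction and are convex, so they cannot keep decreasing. Otherwise the path meets infinitely many distinct regions whose labels are bounded by the starting ones, which contradicts finiteness. Hence every directed path ends at a vertex with no outgoing edge, that is, a sink.

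3. Finally, the no-fork property implies that each vertex has at most one outgoing edge. The arrows therefore form a flow on a tree, and two distinct sinks would force a fork on the geodesic between them. So the sink $s_X$ is unique, and every edge points toward it along the unique path.

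\textbf{Main obstacle.} I expect the delicate step to be the no-fork inequality together with the bookkeeping for ties, where two adjacent vertices may both be sinks. Here I would follow Bowditch's argument directly, choosing $s_X$ to be either endpoint of the tied edge.
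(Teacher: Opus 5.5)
Your Fricke half is fine, and so are Steps 2 and 3. Two minor points there. The separate treatment of a path circling one region $R_u$ is unnecessary, since such a path already meets infinitely many distinct neighbours of $R_u$. Also, $\operatorname{tr}[A,B]$ does not depend on the lift, so the value $-2$ does not come from the choice of lift; it comes from non-elementarity, since trace $+2$ would make $\langle A,B\rangle$ reducible. The paper does not prove this statement but imports it from Fricke and Bowditch, so your outline is a self-contained substitute. In the Fuchsian case, where every label exceeds $2$, it is much simpler than Bowditch's general argument for Markoff maps satisfying the BQ conditions.

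The gap is in Step 1, the load-bearing lemma: you have reversed the orientation. Under the stated convention an edge points toward the endpoint with the smaller third label. So the edge opposite $x$ points \emph{away} from $v$ exactly when $x'=yz-x\le x$.

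The inequalities $x'\ge x$, $y'\ge y$ that you wrote describe two \emph{incoming} edges, which is the normal situation at a vertex. No sharpening via the Markoff equation can turn them into a contradiction. For example, on the modular torus the vertex with labels $(x,y,z)=(3,3,6)$ has $x'=y'=15$, while every label exceeds $2$. Your conclusion $z\ge 2$ is therefore vacuous.

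The repair is immediate. From $yz\le 2x$ and $xz\le 2y$, adding gives $(x+y)z\le 2(x+y)$, hence $z\le 2$, contradicting $z=x_w>2$. Because these inequalities are non-strict, the same line shows that at most one edge at each vertex is outgoing or two-way. So the tie bookkeeping you flagged as the main obstacle is automatic: a directed path can only stop at a vertex with no outgoing edge, or bounce across a single two-way edge. Step 3 then yields a sink that is unique up to the two endpoints of such an edge.
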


\subsection{Farey gaps}

For $H\geq 1$, define
$$
\ProjPrim_H
\overset{\mathrm{def}}{=}
\{\bar w\in\Prim/\{\pm 1\}:\height(\bar w)\leq H\}.
$$
The connected components of
$
\mathbb P^1(\mathbb R)\setminus \ProjPrim_H
$
are open projective intervals. We call them the height-$H$ Farey gaps.

The next lemma records the elementary fact that the endpoints of a height-$H$ Farey gap
are Farey neighbors.

\begin{lemma}
\label{lem:farey-gap}
Let $J$ be a height-$H$ Farey gap. Then its endpoints have primitive
representatives $u,v\in\Prim$ such that one lift of $J$ is
$
\{su+tv:s>0,\ t>0\},
$
and, after possibly interchanging $u$ and $v$,
$
\det(u,v)=1.
$
Moreover, the projective class of $u+v$ lies in $J$ and satisfies
$
\height(u+v)>H.
$
\end{lemma}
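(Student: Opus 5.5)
The plan is to reduce the statement to the classical fact that consecutive terms of a Farey sequence are unimodular neighbors. First I would normalize the projective line. The set $\ProjPrim_H$ is finite and contains the three classes $[(1,0)]$, $[(0,1)]$, $[(1,1)]$, and also $[(1,-1)]$. Every class in $\mathbb P^1(\mathbb R)$ has a representative $(x,y)$ with $x\geq 0$, which is unique up to positive scaling when $x>0$. So I would parametrize $\mathbb P^1(\mathbb R)\setminus\{[(0,1)]\}$ by the slope $y/x\in\mathbb R$.

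Then $\ProjPrim_H$ corresponds to the finite set of reduced fractions $a/b$ with $b\geq 1$, $\max\{|a|,b\}\leq H$, together with $\infty=[(0,1)]$. Consequently a height-$H$ Farey gap $J$ is either an open interval $(a/b,c/d)$ between consecutive such fractions, or the interval through $\infty$ joining the largest fraction $H/1$ to the smallest fraction $-H/1$.

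The key step is to show that consecutive elements are unimodular, that is, $bc-ad=1$. I would argue by the standard lattice-point argument. Let $u=(b,a)$ and $v=(d,c)$, and suppose $\det(u,v)=bc-ad=k\geq 2$. The half-open fundamental parallelogram $\{su+tv:0\leq s,t<1\}$ has area $k$, so it contains an integer point other than $0$. That point has the form $su+tv$ with $s,t\in[0,1)$, and since $u,v$ are primitive, both $s$ and $t$ are positive. Hence the point lies strictly inside the cone spanned by $u$ and $v$, and its $\ell^\infty$-norm is at most
$$
\max\{\height(u),\height(v)\}\leq H,
$$
because $\|su+tv\|_\infty\leq s\|u\|_\infty+t\|v\|_\infty$ when $s+t\leq 1$. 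If instead $s+t>1$, I would replace the point by $u+v-(su+tv)=(1-s)u+(1-t)v$, which has coefficients in $(0,1]$ summing to less than $1$. In either case we obtain a lattice point in the open cone with height at most $H$. Its primitive part has even smaller height and defines a class of $\ProjPrim_H$ inside $J$, which is a contradiction.

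This same argument also handles the gap through $\infty$, directly in vector form. With $u=(1,H)$ and $v=(-1,H)$ we have $\det(u,v)=2H$, so I should not take those vectors. Instead, I would pick representatives so that the lift $\{su+tv:s,t>0\}$ is the correct cone, namely $u=(1,H)$ and $v=(-1,-H+\dots)$. More cleanly, I would run the parallelogram argument for an arbitrary gap without using slopes at all. Choose representatives $u,v$ of the endpoints such that the open cone $\{su+tv:s,t>0\}$ projects onto $J$; this cone is less than a half-plane because $\ProjPrim_H$ contains at least three points. Swapping $u$ and $v$ if necessary makes $\det(u,v)>0$. The argument above then shows that $\det(u,v)=1$.

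Finally, $u+v$ is primitive because $\det(u,u+v)=1$, and its class lies in $J$ since both coefficients equal $1>0$. Because $J$ contains no element of $\ProjPrim_H$, we get $\height(u+v)>H$.

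The only delicate point is the choice of signs of the representatives, so that the positive cone projects onto $J$ rather than onto the complementary arc. This requires $J$ to be a proper arc of angle less than $\pi$ in the lifted picture, and that holds because $J$ omits the points $[(1,0)]$, $[(0,1)]$, $[(1,1)]$, $[(1,-1)]$ of $\ProjPrim_H$ except possibly at its endpoints.
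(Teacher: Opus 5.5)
Your final vector-form argument is correct and is essentially the paper's proof. You choose endpoint representatives so that the lift of $J$ is the open positive cone, take a nonzero lattice point in the half-open fundamental parallelogram, and if necessary reflect it via $q\mapsto u+v-q$ into the triangle with vertices $0,u,v$; this produces a class of height at most $H$ inside $J$, contradicting $\det(u,v)>1$.

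The slope-chart digression can simply be dropped. Its description of a gap through $\infty$ running from $H/1$ to $-H/1$ is wrong, since $[(0,1)]$ has height $1$ and splits that arc. Nothing in your final argument depends on it.
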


\begin{proof}
Choose a lift of $J$ to the circle of oriented rays, and choose primitive
vectors $u$ and $v$ on its two endpoint rays. Choose their signs so that the
lift is the cone
$$
\{su+tv:s>0,\ t>0\}.
$$
After interchanging $u$ and $v$, assume $\det(u,v)>0$. We show that
$\det(u,v)=1$.

Suppose $\det(u,v)>1$. Then the lattice $\mathbb Zu+\mathbb Zv$ has index
greater than $1$ in $\mathbb Z^2$. Hence the fundamental parallelogram spanned
by $u$ and $v$ contains an integer point
$$
q=au+bv
$$
not belonging to $\mathbb Zu+\mathbb Zv$, with $0\leq a,b<1$. Since $u$ and
$v$ are primitive, $q$ cannot lie on either boundary ray. Replacing $q$ by
$u+v-q$ if necessary, we may assume $a+b\leq 1$. Thus $q$ lies in the triangle
with vertices $0,u,v$, and its projective class lies in the open gap $J$.

Let $q_0$ be the primitive vector in the direction of $q$. Then
$$
\height(q_0)\leq \height(q)
\leq
\max\{\height(u),\height(v)\}
\leq H.
$$
This contradicts the definition of $J$, since $J$ contains no projective
primitive class of height at most $H$. Therefore $\det(u,v)=1$.

Now $u+v$ is primitive, because
$
\det(u,u+v)=1.
$
It lies in the open cone between $u$ and $v$, so its projective class lies in
$J$. Since $J$ contains no primitive projective class of height at most $H$, we
get
$
\height(u+v)>H.
$
\end{proof}

\begin{figure}[h]
\centering
\begin{tikzpicture}[scale=1.25,>=Latex,font=\small]

\newcommand{\orientededge}[2]{%
  \draw[thick] (#1) -- (#2);
  \draw[-{Latex[length=2.2mm,width=1.8mm]},
        red!75!black,
        line width=0.7pt]
        ($(#1)!0.45!(#2)$) -- ($(#1)!0.60!(#2)$);
}

\coordinate (L) at (-1.2,0);
\coordinate (R) at (1.2,0);
\coordinate (A) at (-2.35,-0.85);
\coordinate (S) at (-3.65,-1.15);

\coordinate (Lup) at (-2.25,1.05);
\coordinate (Aup) at (-3.15,0.05);
\coordinate (Sup) at (-4.35,-0.45);
\coordinate (Sdown) at (-4.25,-1.9);
\coordinate (Rup) at (2.35,1.05);
\coordinate (Rdown) at (2.35,-1.05);

\fill[blue!5,rounded corners] (-4.45,-2.15) rectangle (-1.05,1.75);
\fill[orange!8,rounded corners] (1.05,-2.15) rectangle (3.8,1.75);

\node[blue!60!black] at (-3.8,1.35) {$T_-$};
\node[orange!80!black] at (3.25,1.35) {$T_J$};

\orientededge{R}{L}
\orientededge{L}{A}
\orientededge{A}{S}

\orientededge{Lup}{L}
\orientededge{Aup}{A}
\orientededge{Sup}{S}
\orientededge{Sdown}{S}

\orientededge{Rup}{R}
\orientededge{Rdown}{R}

\node[left] at (Lup) {$\cdots$};
\node[left] at (Aup) {$\cdots$};
\node[left] at (Sup) {$\cdots$};
\node[left] at (Sdown) {$\cdots$};
\node[right] at (Rup) {$\cdots$};
\node[right] at (Rdown) {$\cdots$};

\foreach \P in {L,R,A}
  \filldraw[black] (\P) circle (1.6pt);

\filldraw[red!75!black] (S) circle (2.4pt);
\node[left,red!75!black] at (S) {$s_X$};

\node[above] at (0,-0.3) {$e$};

\node at (0,0.82) {$R_u\ (x_u)$};
\node at (0,-0.82) {$R_v\ (x_v)$};
\node[blue!65!black] at (-2,0.35) {$R_{u-v}$};
\node[blue!65!black] at (-2,0) {$(x_{u-v})$};
\node[orange!85!black] at (2.65,0) {$R_{u+v}\ (x_{u+v})$};

\node[
  red!75!black,
  fill=white,
  inner sep=2pt
] at (0,0.38) {$x_{u-v}\leq x_{u+v}$};

\end{tikzpicture}
\caption{An illustration for Lemma \ref{lem:large-root}. The edge $e$
separates the gap side $T_J$, adjacent to $R_{u+v}$, from the sink side $T_-$.
The arrows point toward the Bowditch sink $s_X$.}
\label{fig:large-root-gap-side}
\end{figure}

We next combine the Farey-neighbor property with Bowditch's sink theorem. Let
$r_1,r_2,r_3\in\Prim/\{\pm 1\}$ be the three labels of the complementary regions of
$\mathcal T$ incident to the Bowditch sink $s_X$. Define
$$
H_0  \overset{\mathrm{def}}{=} \max\{\height(r_1),\height(r_2),\height(r_3)\}.
$$

\begin{lemma}[The gap side carries the larger Fricke root]
\label{lem:large-root}
Let $H\geq H_0$, and let $J$ be a height-$H$ Farey gap with endpoint
representatives $u,v$ chosen as in Lemma \ref{lem:farey-gap}. Let $e$ be the
edge of $\mathcal T$ corresponding to the Farey pair $\{u,v\}$. Then the
component of $\mathcal T\setminus\{e\}$ facing $J$ is the component adjacent
to the complementary region $R_{u+v}$, and
$$
x_{u+v}\geq x_{u-v}.
$$
\end{lemma}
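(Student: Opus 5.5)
We first identify the component of $\mathcal T\setminus\{e\}$ that faces $J$, and then read off the inequality from the orientation of $e$ given by Theorem~\ref{thm:fricke-bowditch}.

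\emph{Which side faces $J$.} Removing $e$ splits $\mathcal T$ into two components. The complementary regions adjacent to only one of them, that is, all regions other than $R_u$ and $R_v$, are labelled by the primitive classes in the two open projective intervals of $\mathbb P^1(\mathbb R)\setminus\{\bar u,\bar v\}$. This is the standard Farey-tree fact that the regions beyond an edge $\{u,v\}$ are exactly the classes $au+bv$ with $a,b\ge1$ coprime on one side, and $au-bv$ on the other. By Lemma~\ref{lem:farey-gap}, the lift of $J$ is the cone $\{su+tv: s,t>0\}$. Hence $J$ contains $\overline{u+v}$, while $\overline{u-v}$ lies in the complementary interval. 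So the component facing $J$, call it $T_J$, is the one adjacent to $R_{u+v}$; the other, $T_-$, is adjacent to $R_{u-v}$.

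\emph{The sink lies in $T_-$ or at the endpoint of $e$ in $T_-$.} Let $r_1,r_2,r_3$ be the labels at $s_X$.

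Suppose $s_X$ were a vertex of $T_J$ other than the endpoint $u,u+v$ of $e$. Its three labels then form a Farey triangle consisting of classes in $\overline J\cap\mathbb P^1(\mathbb Q)$ (the classes $au+bv$ with $a,b\ge0$), and at least one of them lies in the open gap $J$. Any such class has height $>H\ge H_0$, contradicting $\height(r_i)\le H_0$.

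Suppose instead $s_X$ were the endpoint of $e$ in $T_J$. Then $u+v$ would be one of its labels, again of height $>H\ge H_0$ by Lemma~\ref{lem:farey-gap}, a contradiction.

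Hence $s_X$ is the endpoint of $e$ lying in $T_-$, or lies deeper in $T_-$.

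\emph{Reading off the inequality.} In both cases, Theorem~\ref{thm:fricke-bowditch} says that $e$ has an orientation pointing toward $s_X$: either $e$ is incident to $s_X$, or $e$ lies on the path from $T_J$ to $s_X$. That orientation points from the $T_J$ endpoint to the $T_-$ endpoint. By the orientation rule, this means the third label at the $T_-$ endpoint is the smaller one, i.e. $x_{u-v}\le x_{u+v}$. If the two labels are equal, the edge is oriented both ways and the inequality still holds.

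The main care point is the first step, namely the correct identification of which regions lie beyond $e$ on each side, together with the height bound on the labels at vertices of $T_J$. Every vertex of $T_J$ carries at least one label in $J$ (it is either $u+v$ or a mediant built from it), so the bound $\height>H$ follows directly from Lemma~\ref{lem:farey-gap}.
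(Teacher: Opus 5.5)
Your proposal is correct and follows the paper's proof essentially step for step. You identify $T_J$ as the side adjacent to $R_{u+v}$ via Lemma~\ref{lem:farey-gap}, rule out $s_X\in T_J$ because every vertex of $T_J$ carries a label in $J$ of height $>H\ge H_0$, and read off $x_{u-v}\le x_{u+v}$ from Bowditch's orientation of $e$ toward the $T_-$ endpoint. Your separate case for the endpoint of $e$ in $T_J$ is harmless but unnecessary, since $u+v\in J$ already places it under the general argument.
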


\begin{proof}
In this proof, a primitive vector is also used to denote its projective class.
Across the edge $e$, the two third labels are $u+v$ and $u-v$. Let $T_J$ be
the component of $\mathcal T\setminus\{e\}$ facing the gap $J$, and let $T_-$
be the other component.

By Lemma \ref{lem:farey-gap}, the signs of $u$ and $v$ were chosen so that
$u+v$ lies in $J$. Hence $T_J$ is the component adjacent to $R_{u+v}$, while
$T_-$ is the component adjacent to $R_{u-v}$.

Since $H\geq H_0$, the three labels incident to the Bowditch sink $s_X$ have
height at most $H$. None of them lies in $J$. By the planar dual-tree picture,
every vertex in $T_J$ is incident to some region labelled by a class in $J$.
Thus $s_X\notin T_J$, and so $s_X\in T_-$.

Therefore Bowditch's orientation of $e$ points toward the $u-v$ side. By the
orientation rule, an edge points toward the endpoint with smaller third trace
label. Hence
$
x_{u-v}\leq x_{u+v}.
$
\end{proof}

\subsection{Support directions and normal turn}
\label{sec:support-turn}

This subsection fixes the convex-geometric notation used later to measure the
boundary of the McShane--Rivin unit ball. The definitions and lemmas in this
subsection apply to the unit ball of any norm on $\R^2$; for convenience we write
them for the McShane--Rivin norm $\|\cdot\|_X$. The material is standard; see
Rockafellar \cite{RockafellarConvexAnalysis} for
supporting functionals and subdifferentials, and Schneider \cite{SchneiderConvexBodies} for normal
directions, and curvature measures of planar convex bodies.

Let $B_X$ be the unit ball of $\|\cdot\|_X$. Recall the fixed standard
Euclidean inner product $\langle\cdot,\cdot\rangle$ on $\mathbb R^2$. We use it
to identify $(\mathbb R^2)^*=\operatorname{Hom}(\mathbb R^2,\mathbb R)$ with
$\mathbb R^2$: for $\lambda\in(\mathbb R^2)^*$, let
$\lambda^\sharp\in\mathbb R^2$ be the unique vector such that
\begin{equation}\label{eq:LambdaSharp}
   \lambda(y)=\langle\lambda^\sharp,y\rangle
\end{equation}
for every $y\in\R^2$. If $\lambda\neq 0$, define
$$
\nu(\lambda)  \overset{\mathrm{def}}{=} \frac{\lambda^\sharp}{|\lambda^\sharp|},
\qquad
|\lambda^\sharp|=\sqrt{\langle\lambda^\sharp,\lambda^\sharp\rangle}.
$$
We also write
$$
|\lambda|\overset{\mathrm{def}}{=}|\lambda^\sharp|
$$
for the induced Euclidean norm on $(\R^2)^*$. Thus, when $\lambda=1$ is a
supporting line for $B_X$, the vector $\nu(\lambda)$ is its Euclidean outward
unit normal.

\begin{figure}[h]
\centering
\begin{tikzpicture}[scale=5.2,>=Latex,font=\small]

\draw[->,gray!60] (-0.72,0) -- (0.78,0) node[right] {$H_1(X;\mathbb R)$};
\draw[->,gray!60] (0,-0.72) -- (0,0.78);

\path[
  fill=blue!6,
  draw=blue!65!black,
  thick,
  line join=round
]
(0.5195,0.0000) --
(0.5332,0.0702) --
(0.5479,0.1468) --
(0.5609,0.2323) --
(0.5615,0.3242) --
(0.5441,0.4175) --
(0.5195,0.5195) --
(0.4175,0.5441) --
(0.3242,0.5615) --
(0.2323,0.5609) --
(0.1468,0.5479) --
(0.0702,0.5332) --
(0.0000,0.5195) --
(-0.0618,0.4697) --
(-0.1145,0.4273) --
(-0.1612,0.3893) --
(-0.2040,0.3533) --
(-0.2442,0.3182) --
(-0.2836,0.2836) --
(-0.3182,0.2442) --
(-0.3533,0.2040) --
(-0.3893,0.1612) --
(-0.4273,0.1145) --
(-0.4697,0.0618) --
(-0.5195,0.0000) --
(-0.5332,-0.0702) --
(-0.5479,-0.1468) --
(-0.5609,-0.2323) --
(-0.5615,-0.3242) --
(-0.5441,-0.4175) --
(-0.5195,-0.5195) --
(-0.4175,-0.5441) --
(-0.3242,-0.5615) --
(-0.2323,-0.5609) --
(-0.1468,-0.5479) --
(-0.0702,-0.5332) --
(0.0000,-0.5195) --
(0.0618,-0.4697) --
(0.1145,-0.4273) --
(0.1612,-0.3893) --
(0.2040,-0.3533) --
(0.2442,-0.3182) --
(0.2836,-0.2836) --
(0.3182,-0.2442) --
(0.3533,-0.2040) --
(0.3893,-0.1612) --
(0.4273,-0.1145) --
(0.4697,-0.0618) --
cycle;

\node[blue!65!black] at (-0.18,-0.15) {$B_X$};
\node[blue!65!black,anchor=west] at (-0.34,-0.61) {$\partial B_X$};

\coordinate (O) at (0,0);
\coordinate (pa) at (0.5195,0.5195);
\coordinate (avec) at (0.69,0.69);

\draw[->,black!45,dashed,thick] (O) -- (avec)
node[above right] {$a$};

\draw[->,black,thick] (O) -- (pa);

\filldraw[black] (pa) circle (0.012);
\node[anchor=south west] at (0.54,0.48) {$p_a$};

\coordinate (pb) at (-0.0618,0.4697);
\coordinate (bvec) at (-0.095,0.72);

\draw[->,black!45,dashed,thick] (O) -- (bvec)
node[above left] {$b$};

\draw[->,black!60,thick] (O) -- (pb)
node[left] {$p_b$};

\draw[orange!85!black,very thick,line cap=round,line join=round]
(0.5195,0.5195) --
(0.4175,0.5441) --
(0.3242,0.5615) --
(0.2323,0.5609) --
(0.1468,0.5479) --
(0.0702,0.5332) --
(0.0000,0.5195) --
(-0.0618,0.4697);

\draw[red!75!black,thick]
(0.30,0.6512) -- (0.75,0.3812)
node[below right] {support line $\lambda=1$};

\draw[->,red!75!black,very thick] (pa) -- (0.6095,0.6695)
node[above left] {$\nu(\lambda)$};

\end{tikzpicture}
\caption{The McShane--Rivin unit ball for the
modular torus. The ray $\mathbb R_{>0}a$ meets $\partial B_X$ at
$p_a=a/\|a\|_X$, where $\lambda=1$ supports $B_X$. The red arrow is the support
direction $\nu(\lambda)$, and the orange arc selects the one-sided supporting
functional determined by the cone spanned by $a$ and $b$.}
\label{fig:support-functional}
\end{figure}

For $a\in\R^2\setminus\{0\}$, set
$$
p_a  \overset{\mathrm{def}}{=} \frac{a}{\|a\|_X}\in\partial B_X.
$$
Define
$$
\partial_X(a)
 \overset{\mathrm{def}}{=}
\{\lambda\in(\R^2)^*:\lambda(y)\leq 1\text{ for all }y\in B_X,
\lambda(p_a)=1\}.
$$
Equivalently, $\lambda\in\partial_X(a)$ if and only if the line $\lambda=1$
supports $B_X$ at $p_a$. By homogeneity, the same set can be written as
\begin{equation}\label{eq:Subdifferential}
    \partial_X(a)
\overset{\mathrm{def}}{=}
\{\lambda\in(\R^2)^*:\lambda(y)\leq \|y\|_X\text{ for all }y\in\R^2,
\lambda(a)=\|a\|_X\}.
\end{equation}
Thus $\partial_X(a)$ is the subdifferential of the convex function
$\|\cdot\|_X$ at $a$.

We also use the normal-turn measure $\Turn$ on $\partial B_X$. This is the
curvature measure of the planar convex curve $\partial B_X$, expressed through
Euclidean outward normal directions. More concretely, as one traverses
$\partial B_X$ counterclockwise, $\Turn$ records the angular measure swept out
by the outward normals $\nu(\lambda)$. On a smooth arc, this is the usual
variation of the outward normal angle. At a corner $p$, the supporting
directions form a nontrivial interval in $S^1$, and
$$
\Turn(\{p\})
$$
is the angular length of this interval. We call $\Turn(\{p\})$ the corner atom
at $p$. If $p$ is not a corner, then $\Turn(\{p\})=0$. Since the outward normal
makes one full turn,
$$
\Turn(\partial B_X)=2\pi.
$$
Thus $\Turn$ is used throughout as a finite Borel measure on $\partial B_X$.
In particular, it is monotone: if $E\subset F\subset \partial B_X$ are Borel
sets, then $\Turn(E)\leq \Turn(F)$.

\begin{lemma}[One-sided derivative of a norm]
\label{lem:directional-derivative}
Let $a\neq 0$ and let $b\in\R^2$. Then
$$
\lim_{t\downarrow 0}\frac{\|a+tb\|_X-\|a\|_X}{t}
=
\max_{\lambda\in\partial_X(a)}\lambda(b).
$$
Moreover, if $t_j\downarrow 0$, if $\mu_j\in\partial_X(a+t_jb)$, and if
$\mu_j\to\mu$, then $\mu\in\partial_X(a)$ and
$$
\mu(b)=\max_{\lambda\in\partial_X(a)}\lambda(b).
$$
\end{lemma}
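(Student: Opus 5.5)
The plan is to prove the two assertions separately. The formula comes from the subgradient inequality together with a compactness argument. The limit statement comes from closedness of the subdifferential graph, in the form recorded in \eqref{eq:Subdifferential}. Throughout, write $N=\|\cdot\|_X$ and $\sigma(b)=\max_{\lambda\in\partial_X(a)}\lambda(b)$. The maximum is attained because $\partial_X(a)$ is nonempty, closed, and bounded. Nonemptiness holds by Hahn--Banach, or equivalently by the existence of a supporting line to the convex body $B_X$ at $p_a$. Boundedness holds because $\lambda\le N$ on $\R^2$ forces $|\lambda|\le \sup_{|y|=1}N(y)$.

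\emph{Lower bound.} For every $\lambda\in\partial_X(a)$ and every $t>0$, \eqref{eq:Subdifferential} gives
$$N(a+tb)\ge\lambda(a+tb)=N(a)+t\lambda(b).$$
Hence the difference quotient is at least $\sigma(b)$. The difference quotient is also nondecreasing in $t$, by convexity of $t\mapsto N(a+tb)$. So the limit as $t\downarrow0$ exists and is at least $\sigma(b)$.

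\emph{Upper bound.} Take any $t_j\downarrow0$ and choose $\mu_j\in\partial_X(a+t_jb)$, which is possible by nonemptiness. The $\mu_j$ are uniformly bounded by the same constant as above, so after passing to a subsequence $\mu_j\to\mu$. This step is exactly the second assertion. Passing to the limit in $\mu_j(y)\le N(y)$ gives $\mu(y)\le N(y)$. Passing to the limit in $\mu_j(a+t_jb)=N(a+t_jb)$, using continuity of $N$, gives $\mu(a)=N(a)$. Therefore $\mu\in\partial_X(a)$.

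Next, since $\mu_j(a)\le N(a)$,
$$N(a+t_jb)-N(a)\le \mu_j(a+t_jb)-\mu_j(a)=t_j\mu_j(b).$$
So the quotient along $t_j$ is at most $\mu_j(b)\to\mu(b)\le\sigma(b)$. Combined with the lower bound, the limit equals $\sigma(b)$.

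For the second assertion, let $\mu_j\to\mu$ be given as in the statement. The argument just made shows $\mu\in\partial_X(a)$. It also shows that $\mu(b)$ is bounded below by the limit, which equals $\sigma(b)$, and bounded above by $\sigma(b)$. Hence $\mu(b)=\sigma(b)$.

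Nothing here is a real obstacle. The only point that needs care is the uniform bound on the $\mu_j$, needed to extract a convergent subsequence. It follows from the comparison of $N$ with the Euclidean norm: if $|\lambda^\sharp|>0$, evaluate at $y=\lambda^\sharp/|\lambda^\sharp|$ to get $|\lambda|\le N(y)\le C_X$.
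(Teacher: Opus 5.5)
Your proof is correct. Its core step is the same as the paper's: the inequality $\|a+t_jb\|_X-\|a\|_X\le \mu_j(a+t_jb)-\mu_j(a)=t_j\mu_j(b)$, which comes from $\mu_j\in\partial_X(a+t_jb)$ together with $\mu_j(a)\le\|a\|_X$.

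The difference is in what is taken as given. The paper cites Rockafellar for two facts: the max formula (Theorem 23.4) and closedness of the subdifferential graph (Theorem 24.4). It then uses the max formula to supply $\mu(b)\le$ derivative, which finishes the second assertion.

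You instead derive everything directly from \eqref{eq:Subdifferential}:
\begin{itemize}
\item The lower bound of the max formula comes from the subgradient inequality. Monotonicity of convex difference quotients also gives existence of the limit.
\item The closed-graph step comes from passing to the limit in the two defining conditions.
\item The upper bound comes from running the paper's second-assertion inequality on an arbitrary sequence of subgradients. These are extracted by compactness: nonemptiness via a supporting line, and the uniform bound $|\mu_j|\le\sup_{|y|=1}\|y\|_X$.
\end{itemize}

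This reverses the paper's logical order. In your version, the second assertion's inequality is what proves the formula, rather than the formula being invoked to close the second assertion. It also shows that the paper's final appeal to the first formula only needs its easy half, $\mu(b)\le$ derivative for $\mu\in\partial_X(a)$. That is exactly your subgradient-inequality bound.

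What you gain is a self-contained finite-dimensional argument, at the cost of a few extra lines. The paper's version is shorter but leans on general convex-analysis theorems.

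One cosmetic point: if you use the $p_a$-based definition of $\partial_X$, note that $a+t_jb\neq0$ for small $t_j$. With \eqref{eq:Subdifferential} this issue does not arise.
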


\begin{proof}
The first formula is the standard directional-derivative formula for finite
convex functions: the one-sided directional derivative is the support function
of the subdifferential; see Rockafellar
\cite[Theorem 23.4]{RockafellarConvexAnalysis}. Applied to the
finite convex function $\|\cdot\|_X$, it gives
$$
\lim_{t\downarrow 0}\frac{\|a+tb\|_X-\|a\|_X}{t}
=
\max_{\lambda\in\partial_X(a)}\lambda(b).
$$

For the final assertion, the closed-graph property of the subdifferential of a
closed proper convex function gives $\mu\in\partial_X(a)$; see Rockafellar
\cite[Theorem 24.4]{RockafellarConvexAnalysis}. Since
$\mu_j\in\partial_X(a+t_jb)$, we have
$$
\|a+t_jb\|_X=\mu_j(a+t_jb)=\mu_j(a)+t_j\mu_j(b).
$$
Also $\mu_j(a)\leq \|a\|_X$. Hence
$$
\frac{\|a+t_jb\|_X-\|a\|_X}{t_j}\leq \mu_j(b).
$$
Passing to the limit gives
$$
\lim_{t\downarrow 0}\frac{\|a+tb\|_X-\|a\|_X}{t}
\leq
\mu(b).
$$
Since $\mu\in\partial_X(a)$, the first formula gives the reverse inequality.
Therefore
$$
\mu(b)=\max_{\lambda\in\partial_X(a)}\lambda(b).
$$
\end{proof}

We now define the supporting functional selected by a second direction. Let
$a,b\in\R^2$ be linearly independent. Define $\lambda_{a|b}$ to be the unique
element of $\partial_X(a)$ such that
\begin{equation}\label{eq:Lambda}
    \lambda_{a|b}(b)
=
\max_{\lambda\in\partial_X(a)}\lambda(b).
\end{equation}
It is unique because all elements of $\partial_X(a)$ agree on $a$, and if two
of them also agree on $b$, then they agree on the basis $a,b$. Geometrically,
if $p_a$ is a corner, the set of supporting functionals is an interval, and
$\lambda_{a|b}$ is the endpoint selected from the side of the cone spanned by
$a$ and $b$. By Lemma \ref{lem:directional-derivative},
\begin{equation}\label{eq:SubDifference}
\lambda_{a|b}(b)
=
\lim_{t\downarrow 0}
\frac{\|a+tb\|_X-\|a\|_X}{t}.
\end{equation}

For linearly independent $a,b\in\mathbb R^2$, define the open cone arc
$$
\partial B_X^\circ(a,b)
\overset{\mathrm{def}}{=}
\left\{
\frac{sa+tb}{\|sa+tb\|_X}:s>0,\ t>0
\right\}
\subset\partial B_X.
$$
Its endpoint support directions are taken in the one-sided sense: namely
$\nu(\lambda_{a|b})$ at $p_a$ and $\nu(\lambda_{b|a})$ at $p_b$. Thus
$\Turn(\partial B_X^\circ(a,b))$ means the normal turn swept along this open
arc, with endpoint corner atoms omitted.

The next elementary estimate converts differences of supporting functionals
into angular differences of Euclidean normal directions. Its only input is that
all supporting functionals lie on the compact dual unit circle.

\begin{lemma}[Functional distance and angular distance]
\label{lem:functional-angle}
There are constants $c_X>0$ and $K_X>0$ such that the following holds. If
$\lambda\in\partial_X(a)$ and $\mu\in\partial_X(b)$ for some $a,b\neq 0$, and
if $\theta$ is the smaller angle between $\nu(\lambda)$ and $\nu(\mu)$, then
$$c_X|\det(\lambda^\sharp,\mu^\sharp)|\le
\theta\leq K_X|\lambda-\mu|
.$$
\end{lemma}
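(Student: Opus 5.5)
The approach is to use compactness of the set of all supporting functionals. Every $\lambda\in\partial_X(a)$ satisfies $\lambda(y)\le\|y\|_X$ for all $y$, with equality at $a$. Hence $\lambda$ lies on the unit sphere of the dual norm $\|\cdot\|_X^*$. By comparison of norms on $(\R^2)^*$ there are constants $0<r_X\le R_X$ with
$$
r_X\le|\lambda^\sharp|\le R_X
$$
for every supporting functional $\lambda$. These two Euclidean bounds are the only input; both inequalities then follow from elementary planar trigonometry.

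For the upper bound, write $\lambda^\sharp=|\lambda^\sharp|\nu(\lambda)$ and $\mu^\sharp=|\mu^\sharp|\nu(\mu)$. Since both vectors have length at least $r_X$, the chord between the points $\nu(\lambda)=\lambda^\sharp/|\lambda^\sharp|$ and $\nu(\mu)=\mu^\sharp/|\mu^\sharp|$ on $S^1$ is controlled by the elementary estimate
$$
\Bigl|\frac{x}{|x|}-\frac{y}{|y|}\Bigr|\le\frac{2|x-y|}{\min\{|x|,|y|\}}.
$$
This estimate follows by adding and subtracting $y/|x|$ and using $\bigl||x|-|y|\bigr|\le|x-y|$. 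The angle $\theta\in[0,\pi]$ satisfies $\theta\le\frac{\pi}{2}\cdot|\nu(\lambda)-\nu(\mu)|$, since the chord length equals $2\sin(\theta/2)\ge 2\theta/\pi$. Combining these gives $\theta\le (\pi/r_X)|\lambda-\mu|$, so one can take $K_X=\pi/r_X$.

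For the lower bound, use
$$
|\det(\lambda^\sharp,\mu^\sharp)|=|\lambda^\sharp||\mu^\sharp|\sin\theta\le R_X^2\sin\theta\le R_X^2\,\theta,
$$
which holds because $\sin\theta\le\theta$ for $\theta\in[0,\pi]$. This gives $c_X=R_X^{-2}$.

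The only point needing care is the justification of the uniform bounds $r_X,R_X$. For $R_X$, note that $|\lambda(y)|\le\|y\|_X\le C\|y\|_2$ for all $y$, applying the defining inequality also to $-y$. For $r_X$, evaluate at $a$: we have $\|a\|_X=\lambda(a)\le|\lambda^\sharp||a|_2$, and $\|a\|_X\ge c|a|_2$ by equivalence of norms, so $|\lambda^\sharp|\ge c$. Both constants depend only on $X$, and nothing else is an obstacle.
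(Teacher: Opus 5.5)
Your proposal is correct and follows essentially the same route as the paper: you establish uniform two-sided Euclidean bounds on all supporting functionals, then derive the upper bound from the chord estimate with $K_X=\pi/r_X$ and the lower bound from $|\det(\lambda^\sharp,\mu^\sharp)|\le R_X^2\sin\theta\le R_X^2\theta$. The only cosmetic difference is that the paper obtains the bounds on $|\lambda^\sharp|$ by showing supporting functionals lie on the compact dual unit sphere, whereas you derive them directly from norm equivalence and from evaluating $\lambda$ at $a$.
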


\begin{proof}
Let $\|\cdot\|_{X,*}$ be the dual norm of $\|\cdot\|_X$, defined by
$$
\|\eta\|_{X,*}\overset{\mathrm{def}}{=}\sup_{\|y\|_X\leq 1}|\eta(y)|.
$$
If $\eta$ supports $B_X$ at $p\in\partial B_X$, then, with our normalization, $\eta(y)\leq 1$ for all $y\in B_X$ and $\eta(p)=1$. Since $B_X=-B_X$, this implies $|\eta(y)|\leq 1$ on $B_X$, so $\|\eta\|_{X,*}\leq 1$; the equality $\eta(p)=1$ gives the reverse inequality. Hence $\|\eta\|_{X,*}=1$. Thus all supporting functionals lie on the dual unit
sphere. This sphere is compact and does not contain $0$. Hence there are
constants $0<\kappa_X\leq C_X$ such that
\begin{equation}\label{eq:NormBound}
\kappa_X\leq |\eta|\leq C_X
\end{equation}
for every supporting functional $\eta$.

For supporting functionals $\lambda$ and $\mu$, we have
$$ |\nu(\lambda)-\nu(\mu)|=\left|\frac{\lambda^\sharp}{|\lambda^\sharp|}-\frac{\mu^\sharp}{|\mu^\sharp|}\right|\leq \frac{|\lambda^\sharp-\mu^\sharp|}{|\lambda^\sharp|}+\frac{\big||\mu^\sharp|-|\lambda^\sharp|\big|}{|\lambda^\sharp|}\leq \frac{2}{\kappa_X}|\lambda-\mu|. $$
If $\theta$ is the smaller angle between two unit vectors $u$ and $w$, then $|u-w|=2\sin(\theta/2)$, hence $\theta\leq(\pi/2)|u-w|$. Applying this to $u=\nu(\lambda)$ and $w=\nu(\mu)$ gives $$ \theta\leq \frac{\pi}{\kappa_X}|\lambda-\mu|=K_X|\lambda-\mu|. $$
For the lower bound, use
$$
|\det(\nu(\lambda),\nu(\mu))|
=
\frac{|\det(\lambda^\sharp,\mu^\sharp)|}
{|\lambda^\sharp|\,|\mu^\sharp|}.
$$
By \eqref{eq:NormBound}, the denominator is at most $C_X^2$. Hence
$$
\sin\theta
=
|\det(\nu(\lambda),\nu(\mu))|
\geq
C_X^{-2}|\det(\lambda^\sharp,\mu^\sharp)|.
$$
Since $\theta\geq \sin\theta$ for $0\leq\theta\leq\pi$, after changing the
constant we obtain
$$
\theta\geq c_X|\det(\lambda^\sharp,\mu^\sharp)|.
$$
\end{proof}

\section{Endpoint estimates and the exponential normal-turn tail}
\label{sec:one-gap-turn}
\label{sec:endpoint-estimates}
\label{sec:gap-turn-section}
\label{sec:tail-turn-section}

We prove the main normal-turn estimate in three steps. First, the Fricke
recurrence gives an endpoint estimate for the one-sided supporting lines of a
Farey gap. Second, this estimate shows that one high Farey gap carries very
little normal turn. Finally, summing over all height-$H$ gaps gives the
exponential tail estimate.

\begin{proposition}[Endpoint support estimate]
\label{prop:functional-estimate}
There is a constant $C_X>0$ such that, whenever $u,v\in\Prim$ satisfy
$\det(u,v)=1$ and
$
x_{u+v}\geq x_{u-v},
$
we have
$$
|\lambda_{u|v}(v)-\|v\|_X|
+
|\lambda_{v|u}(u)-\|u\|_X|
\leq
C_Xe^{-\|u\|_X-\|v\|_X}.
$$
\end{proposition}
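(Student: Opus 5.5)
Compute the one-sided derivative $\lambda_{u|v}(v)=\lim_{t\downarrow0}(\|u+tv\|_X-\|u\|_X)/t$ through a sequence of primitive vectors approaching the direction of $u$ from the $v$ side. Natural choice: $w_n=nu+v$, with $\det(u,w_n)=1$. By homogeneity,
$$\|u+\tfrac1n v\|_X-\|u\|_X=\tfrac1n\bigl(\|nu+v\|_X-n\|u\|_X\bigr),$$
so $\lambda_{u|v}(v)=\lim_{n\to\infty}(\|nu+v\|_X-n\|u\|_X)$. This reduces the estimate to controlling $\ell_n:=\|w_n\|_X-n\|u\|_X$ and showing $|\lim_n\ell_n-\|v\|_X|\le C_Xe^{-\|u\|_X-\|v\|_X}$. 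The estimate for $\lambda_{v|u}(u)$ follows symmetrically using $w_n'=u+nv$; the hypothesis $x_{u+v}\ge x_{u-v}$ is symmetric in $u,v$.

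Work with trace labels. Since $\det(u,w_n)=1$, the Fricke identity in Theorem \ref{thm:fricke-bowditch} gives $x_{w_{n+1}}+x_{w_{n-1}}=x_ux_{w_n}$. This is a linear recurrence with characteristic roots $e^{\pm\|u\|_X/2}$, since $x_u=2\cosh(\|u\|_X/2)$. Hence
$$x_{w_n}=Ae^{n\|u\|_X/2}+Be^{-n\|u\|_X/2},$$
with $A,B$ determined by $x_{w_0}=x_v$ and $x_{w_{-1}}=x_{v-u}=x_{u-v}$. Explicitly,
$$A=\frac{x_ve^{\|u\|_X/2}-x_{u-v}}{e^{\|u\|_X}-1},\qquad B=x_v-A.$$
Then $\|w_n\|_X=2\operatorname{arccosh}(x_{w_n}/2)=n\|u\|_X+2\log A+o(1)$, so $\lim\ell_n=2\log A$. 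The task becomes proving
$$2\log A=\|v\|_X+O_X\bigl(e^{-\|u\|_X-\|v\|_X}\bigr).$$

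Write $x_v=e^{\|v\|_X/2}+e^{-\|v\|_X/2}$. Then $A e^{-\|v\|_X/2}$ is a perturbation of $1$ whose size is governed by $x_{u-v}e^{-\|u\|_X/2-\|v\|_X/2}$ together with harmless terms of order $e^{-\|u\|_X}$ and $e^{-\|v\|_X}$. This is the main obstacle: both the $x_{u-v}$ term and these naive error terms must be shown to be $O(e^{-\|u\|_X-\|v\|_X})$, which is where the hypothesis enters.

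First, from $x_{u+v}x_{u-v}=x_u^2+x_v^2$ and $x_{u-v}\le x_{u+v}$, together with $x_{u+v}+x_{u-v}=x_ux_v$, we get that $x_{u-v}$ is the smaller root. Hence
$$x_{u-v}=\frac{x_u^2+x_v^2}{x_{u+v}}\approx\frac{x_u^2+x_v^2}{x_ux_v}\approx e^{|\|u\|_X-\|v\|_X|/2}.$$
Second, do the expansion exactly rather than term by term. I expect an identity of the form
$$x_ve^{\|u\|_X/2}-x_{u-v}=e^{(\|u\|_X+\|v\|_X)/2}\bigl(1-e^{-\|u\|_X}\bigr)\bigl(1+\varepsilon\bigr),$$
with the $e^{-\|u\|_X}$ parts cancelling against the denominator $e^{\|u\|_X}-1$ and leaving $\varepsilon=O(e^{-\|u\|_X-\|v\|_X})$. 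To verify this, substitute the exact smaller root
$$x_{u-v}=\frac{x_ux_v-\sqrt{x_u^2x_v^2-4(x_u^2+x_v^2)}}{2}$$
and expand in $e^{-\|u\|_X}$ and $e^{-\|v\|_X}$. The leading cancellation should correspond to the cusp condition (trace of the commutator equal to $-2$) encoded in the Markoff-type relation. Lengths of essential simple curves are bounded below on $X$, and primitive vectors are discrete, so the finitely many small cases can be absorbed into $C_X$.
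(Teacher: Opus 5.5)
Your argument follows the paper's proof. Both run the Fricke recurrence along $nu+v$, identify $\lambda_{u|v}(v)=2\log A$, and use $x_{u+v}\ge x_{u-v}$ to single out $x_{u-v}$ as the smaller Fricke root. The differences are cosmetic. You seed the recurrence with $x_v,x_{u-v}$ instead of $x_v,x_{u+v}$; this gives the same $A$, because $x_{u+v}+x_{u-v}=x_ux_v$. You also propose the quadratic formula where the paper uses the factorization $(2\cosh(a-b)-x_{u+v})(2\cosh(a-b)-x_{u-v})=4$, with $a=\|u\|_X/2$ and $b=\|v\|_X/2$.

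Two corrections are needed.

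\textbf{The formula for $A$.} Your denominator is wrong. Solving $A+B=x_v$ and $Ae^{-a}+Be^{a}=x_{u-v}$ gives
\[
A=\frac{x_ve^{a}-x_{u-v}}{e^{a}-e^{-a}},
\]
not a quotient by $e^{2a}-1$. With your denominator, your own expected identity would give $2\log A\approx\|v\|_X-\|u\|_X$ instead of $\|v\|_X$.

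\textbf{The decisive cancellation.} You only assert it, but it is true and quick. Put $S=\sinh a\sinh b$. The discriminant is $x_u^2x_v^2-4(x_u^2+x_v^2)=16(S^2-1)$, so
\[
x_{u-v}-2\cosh(a-b)=2\bigl(S-\sqrt{S^2-1}\bigr)\in(0,2/S].
\]
Hence
\[
0<1-Ae^{-b}=\frac{x_{u-v}-2\cosh(a-b)}{e^{b}(e^{a}-e^{-a})}\le C_Xe^{-2a-2b}
\]
for $a,b\ge\eta_X$. This is exactly your identity, with $|\varepsilon|\le C_Xe^{-\|u\|_X-\|v\|_X}$.

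You also need $A>0$ to write $\|nu+v\|_X=2na+2\log A+o(1)$. This follows from $x_ve^{a}-x_{u-v}=x_{u+v}-x_ve^{-a}\ge x_v\sinh a>0$, using $x_{u+v}\ge x_ux_v/2$.

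With these fixes your proof is complete, and the symmetric estimate along $u+nv$ goes through the same way.
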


\begin{proof}
Set
$$
a=\frac{\|u\|_X}{2},
\qquad
b=\frac{\|v\|_X}{2},
\qquad
x=x_u=e^a+e^{-a},
\qquad
y=x_v=e^b+e^{-b},
$$
and put
$$
z=x_{u+v},
\qquad
z'=x_{u-v}.
$$
Then $z\geq z'$, and the Fricke identity (Theorem \ref{thm:fricke-bowditch}) gives
$$
z+z'=xy,
\qquad
zz'=x^2+y^2.
$$
Since simple closed geodesic lengths on $X$ are bounded below, $a,b\geq\eta_X>0$.

Define
$$
A=\frac{z-ye^{-a}}{e^a-e^{-a}},
\qquad
B=\frac{z-xe^{-b}}{e^b-e^{-b}}.
$$
We claim that
$$
|2\log A-2b|+|2\log B-2a|
\leq
C_Xe^{-2a-2b}.
$$
It is enough to prove the estimate for $A$. Since $z+z'=xy$ and $z\geq z'$,
we have $z\geq xy/2$, hence
$$
z-ye^{-a}
\geq
\frac{y}{2}(e^a-e^{-a})>0.
$$
Thus $A>0$. A direct calculation gives
$$
\left(e^{a-b}+e^{b-a}-z\right)
\left(e^{a-b}+e^{b-a}-z'\right)=4.
$$
Moreover,
$$
\frac{xy}{2}-e^{a-b}-e^{b-a}
=
\frac{1}{2}e^{a+b}(1-e^{-2a})(1-e^{-2b})
\geq
c_Xe^{a+b}.
$$
Since $z\geq xy/2$, it follows that
$$
z-e^{a-b}-e^{b-a}\geq c_Xe^{a+b},
$$
and therefore
$$
0<z'-e^{a-b}-e^{b-a}\leq C_Xe^{-a-b}.
$$
Using $z+z'=xy$, we get
$$
e^b-A
=
\frac{z'-e^{a-b}-e^{b-a}}{e^a-e^{-a}},
$$
so
$$
0<1-Ae^{-b}\leq C_Xe^{-2a-2b}.
$$
For $a+b$ large, this gives $Ae^{-b}\geq 1/2$, and hence
$$
|2\log A-2b|
=
2|\log(Ae^{-b})|
\leq
4|1-Ae^{-b}|
\leq
C_Xe^{-2a-2b}.
$$
On the remaining bounded range $a,b\geq\eta_X$, the positivity of $A$ and
compactness allow us to enlarge $C_X$ and keep the same estimate. The estimate
for $B$ follows by interchanging $u$ and $v$.

Now apply the Fricke recurrence along the ray $nu+v$:
$$
x_{(n+1)u+v}+x_{(n-1)u+v}=x x_{nu+v}.
$$
Solving it with initial values $x_v=y$ and $x_{u+v}=z$ gives
$$
x_{nu+v}=Ae^{na}+O_{u,v}(e^{-na}).
$$
Since $A>0$,
$$
\|nu+v\|_X
=
2\arccosh\frac{x_{nu+v}}{2}
=
2na+2\log A+o(1).
$$
By homogeneity and \eqref{eq:SubDifference},
\begin{equation}\label{eq:FanComputing}
    \lambda_{u|v}(v)
=
\lim_{n\to\infty}
\left(\|nu+v\|_X-n\|u\|_X\right)
=
2\log A.
\end{equation}
Hence
$$
|\lambda_{u|v}(v)-\|v\|_X|
=
|2\log A-2b|
\leq
C_Xe^{-2a-2b}
=
C_Xe^{-\|u\|_X-\|v\|_X}.
$$
Similarly, the recurrence along $u+nv$ gives
$
\lambda_{v|u}(u)=2\log B,
$
and therefore
$$
|\lambda_{v|u}(u)-\|u\|_X|
=
|2\log B-2a|
\leq
C_Xe^{-2a-2b}
=
C_Xe^{-\|u\|_X-\|v\|_X}.
$$
\end{proof}

We now pass from endpoint estimates to normal turn over one Farey gap. Let
$J$ be a height-$H$ Farey gap, and choose endpoint representatives $u,v$ as in
Lemma \ref{lem:farey-gap}. Define
$$
\partial B_X^+(J)
\overset{\mathrm{def}}{=}
\left\{\frac{su+tv}{\|su+tv\|_X}:s>0,\ t>0\right\},\quad
\text{ and } \quad
\partial B_X^-(J)  \overset{\mathrm{def}}{=} -\partial B_X^+(J).
$$
Set
$$
\Turn(J)  \overset{\mathrm{def}}{=} \Turn(\partial B_X^+(J))+\Turn(\partial B_X^-(J)).
$$
Endpoint corner atoms at the rays through $u$, $v$, $-u$, and $-v$ are
omitted; corner atoms in the interiors of the two gap arcs are included.

\begin{proposition}[Turn of one Farey gap]
\label{prop:gap-turn}
There are constants $C_X>0$, $\kappa_X>0$, and $H_X\geq 1$ such that the
following holds. Let $H\geq H_X$, and let $J$ be a height-$H$ Farey gap with
endpoints $u,v$ chosen as in Lemma \ref{lem:farey-gap}. Then
$$
\Turn(J)
\leq
C_XH e^{-\|u\|_X-\|v\|_X}
\leq
C_XH e^{-\kappa_X\height(u+v)}.
$$
\end{proposition}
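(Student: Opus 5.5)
The plan is to bound the turn on the open arc $\partial B_X^+(J)$ by the angle between the two one-sided endpoint normals $\nu(\lambda_{u|v})$ and $\nu(\lambda_{v|u})$. The antipodal arc $\partial B_X^-(J)$ then has the same turn by central symmetry, which only doubles the constant.

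\emph{Turn equals angle between endpoint normals.} Since the arc is convex and sweeps less than a half turn once $H$ is large, its total turn (open arc, endpoint atoms omitted) is exactly the angle $\theta$ between these two normals. By Lemma \ref{lem:functional-angle}, $\theta\le K_X|\lambda_{u|v}-\lambda_{v|u}|$, so it suffices to bound the difference of the two functionals.

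\emph{Evaluating the difference on the basis $u,v$.} Since $\det(u,v)=1$, the pair $u,v$ is a basis. By definition $\lambda_{u|v}(u)=\|u\|_X$ and $\lambda_{v|u}(v)=\|v\|_X$. Hence
\[
(\lambda_{u|v}-\lambda_{v|u})(u)=\|u\|_X-\lambda_{v|u}(u),
\qquad
(\lambda_{u|v}-\lambda_{v|u})(v)=\lambda_{u|v}(v)-\|v\|_X .
\]
Lemma \ref{lem:large-root} (valid since $H\ge H_0$) gives $x_{u+v}\ge x_{u-v}$. Proposition \ref{prop:functional-estimate} then bounds both quantities by $C_Xe^{-\|u\|_X-\|v\|_X}$.

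\emph{Converting to a Euclidean bound.} Write the functional in the dual basis. Its Euclidean norm is at most $(|u^*|+|v^*|)$ times these values, where $u^*,v^*$ is the dual basis. Because $\det(u,v)=1$, the dual basis vectors are the rotated vectors $v^\perp,-u^\perp$, so $|u^*|+|v^*|\le |u|+|v|\le C\,H$. This produces the factor $H$, giving
\[
\Turn(J)\le C_XHe^{-\|u\|_X-\|v\|_X}.
\]

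\emph{Second inequality.} By \eqref{eq:NormComparison} and the triangle inequality for heights,
\[
\|u\|_X+\|v\|_X\ge c_X(\height(u)+\height(v))\ge c_X\height(u+v),
\]
so we may take $\kappa_X=c_X$.

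\emph{Main obstacle.} The delicate point is justifying that the arc turn is exactly the endpoint-normal angle, rather than that angle plus multiples of $2\pi$. I would argue monotonicity of the normal angle along a convex arc. For $H\ge H_X$ the gap $J$ is projectively short, since its length is at most about $1/H^2$, so the normals vary by less than $\pi$. Also, $\lambda_{u|v}$ and $\lambda_{v|u}$ are the limits of supporting functionals taken from inside the arc, by the closed-graph part of Lemma \ref{lem:directional-derivative}. Hence the open arc's turn is exactly the angle between these two limiting normals, with the endpoint atoms correctly excluded.
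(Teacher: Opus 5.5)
Your proposal is correct and is essentially the paper's proof: Lemma~\ref{lem:large-root} and Proposition~\ref{prop:functional-estimate} evaluated on the basis $u,v$, the dual-basis expansion producing the factor $H$, and Lemma~\ref{lem:functional-angle}. The second inequality differs only cosmetically. The one real variation is how you rule out a turn of $2\pi-\theta$. The paper observes that the two disjoint antipodal arcs have equal turn, so each has turn at most $\pi$. Your shortness-of-gap argument also works once you add that the outward normal at each $p\in\partial B_X$ makes an acute angle with $p$ (the origin is interior), so the normals over a sector of width $\epsilon$ fill at most $\pi+\epsilon<2\pi-\theta$. Note, however, that the gap length is $O(1/H)$, not $O(1/H^2)$; for example, the gap between $(1,0)$ and $(H,1)$ has length about $1/H$.
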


\begin{proof}
Choose $H_X\geq H_0$, where $H_0$ is as in Lemma \ref{lem:large-root}.
Then Lemma \ref{lem:large-root} gives
$
x_{u+v}\geq x_{u-v}.
$
We enlarge $H_X$ later if necessary. Since
$\lambda_{u|v}\in\partial_X(u)$ and $\lambda_{v|u}\in\partial_X(v)$, we have
$$
\lambda_{u|v}(u)=\|u\|_X,
\qquad
\lambda_{v|u}(v)=\|v\|_X.
$$
Thus Proposition \ref{prop:functional-estimate} gives
$$
|\lambda_{u|v}(u)-\lambda_{v|u}(u)|
+
|\lambda_{u|v}(v)-\lambda_{v|u}(v)|
\leq
C_Xe^{-\|u\|_X-\|v\|_X}.
$$
Let $u^*,v^*\in(\mathbb R^2)^*$ be the dual basis to $u,v$, so
$$
u^*(u)=1,\quad u^*(v)=0,\quad v^*(u)=0,\quad v^*(v)=1.
$$
Since $\det(u,v)=1$ and $\height(u),\height(v)\leq H$, the coordinates of
$u^*$ and $v^*$ are bounded by $CH$. Writing
$\eta=\lambda_{u|v}-\lambda_{v|u}$, we have the expansion
$$
\eta=\eta(u)u^*+\eta(v)v^*.
$$
Hence, the previous estimate gives
$$
|\lambda_{u|v}-\lambda_{v|u}| = |\eta|
\leq
CH\bigl(|\eta(u)|+|\eta(v)|\bigr)
\leq
C_XH e^{-\|u\|_X-\|v\|_X}.
$$
By Lemma \ref{lem:functional-angle}, the smaller angle between
$\nu(\lambda_{u|v})$ and $\nu(\lambda_{v|u})$ is at most
$$
C_XH e^{-\|u\|_X-\|v\|_X}.
$$
By the triangle inequality and \eqref{eq:NormComparison},
$$
\|u\|_X+\|v\|_X
\geq
\|u+v\|_X
\geq
\kappa_X\height(u+v)
\geq
 \kappa_X H.
$$
Thus, after increasing $H_X$ if necessary, the angle is less than $1$ for all
$H\geq H_X$. The one-sided endpoint support directions of $\partial B_X^+(J)$ are exactly
$\nu(\lambda_{u|v})$ and $\nu(\lambda_{v|u})$. Along a connected convex
boundary arc, the outward normals sweep one of the two circular intervals
between these endpoint directions. These intervals have lengths $\theta$ and
$2\pi-\theta$, where $\theta$ is the smaller angle. The antipodal arc
$\partial B_X^-(J)$ has the same turn as $\partial B_X^+(J)$; hence each has
turn at most $\pi$. Since $\theta<1<\pi$, the turn of $\partial B_X^+(J)$ must
be the smaller interval, namely $\theta$. Therefore
$$
\Turn(J)
\leq
C_XH e^{-\|u\|_X-\|v\|_X}
\leq
C_XH e^{-\kappa_X\height(u+v)}.
$$
\end{proof}

Summing the one-gap estimate gives the global tail bound.

\begin{theorem}[Exponential tail turn]
\label{thm:tail-turn}
There are constants $C_X>0$ and $\kappa_X>0$ such that, for every $H\geq 1$,
$$
\Turn\left(
\partial B_X\setminus
\left\{
\pm\frac{w}{\|w\|_X}:\bar w\in\ProjPrim_H
\right\}
\right)
\leq C_Xe^{-\kappa_XH}.
$$
\end{theorem}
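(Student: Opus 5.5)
The plan is to decompose the complement of the low-height boundary points into the arcs over height-$H$ Farey gaps and sum the one-gap bound of Proposition~\ref{prop:gap-turn}. For $H\ge H_X$, the set $\partial B_X\setminus\{\pm w/\|w\|_X:\bar w\in\ProjPrim_H\}$ is exactly the disjoint union, over all height-$H$ Farey gaps $J$, of the open arcs $\partial B_X^+(J)\cup\partial B_X^-(J)$. Indeed, each point of $\partial B_X$ not lying over a direction in $\ProjPrim_H$ lies over a unique component of $\mathbb P^1(\mathbb R)\setminus\ProjPrim_H$. By Lemma~\ref{lem:farey-gap}, each such component lifts to the open cone spanned by $u,v$ and to its negative. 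Since the endpoint atoms are excluded in $\Turn(J)$, and since $\Turn$ is a measure, the left-hand side equals $\sum_J \Turn(J)$.

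Next I bound the sum using Proposition~\ref{prop:gap-turn}, which gives $\Turn(J)\le C_XH e^{-\kappa_X\height(u+v)}$, where $\height(u+v)>H$. To sum, I need to control how many gaps have $u+v$ of a given height. The map $J\mapsto \overline{u+v}$ is injective, since distinct gaps are disjoint and $\overline{u+v}\in J$. The number of primitive classes of height exactly $h$ is at most $Ch$. Hence
$$
\sum_J \Turn(J)\le C_XH\sum_{h>H} Ch\,e^{-\kappa_X h}\le C_X H^2 e^{-\kappa_X H}.
$$
Absorbing the polynomial factor by halving $\kappa_X$ gives $C_Xe^{-\kappa_X H/2}$. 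Alternatively, I can use the sharper form $e^{-\|u\|_X-\|v\|_X}$ together with the count of Farey gaps, which is $O(H^2)$. Either route gives the stated bound for $H\ge H_X$.

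Finally, for $1\le H<H_X$, the left-hand side is at most $\Turn(\partial B_X)=2\pi$. Enlarging $C_X$ so that $C_Xe^{-\kappa_XH_X}\ge 2\pi$ covers this finite range.

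I expect the main (though mild) obstacle to be the first step: verifying that the gap arcs exhaust the complement without overlap, and that no turn is lost or double counted at the endpoints. The subtlety is that a corner at $\pm u/\|u\|_X$ with $\height(u)\le H$ is removed, which is consistent with $\Turn(J)$ omitting endpoint atoms, and that the open cone arcs carry exactly the turn of the open arcs.
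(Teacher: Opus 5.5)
Your proposal is correct and follows the paper's proof. You decompose the tail into the open arcs over the height-$H$ Farey gaps, apply Proposition~\ref{prop:gap-turn} to each gap, absorb the polynomial factor by halving $\kappa_X$, and use the trivial bound $2\pi$ for $H<H_X$. The only difference is cosmetic: you sum using the injective mediant map $J\mapsto\overline{u+v}$, which gives a factor $H^2$, whereas the paper multiplies the $O(H^2)$ gap count by the per-gap bound $C_XHe^{-\kappa_XH}$ and gets $H^3$.
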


\begin{proof}
By Proposition \ref{prop:gap-turn}, there is a cutoff $H_X$ such that the
one-gap estimate holds for all $H\geq H_X$. For $1\leq H<H_X$, the trivial
bound $\Turn(\partial B_X)=2\pi$ is absorbed into the constant. Hence we may
assume $H\geq H_X$.

Since $\ProjPrim_H$ is represented by primitive vectors in
$[-H,H]^2\cap\mathbb Z^2$, we have $\#\ProjPrim_H=O(H^2)$. Thus
$\mathbb P^1(\mathbb R)\setminus\ProjPrim_H$ has $O(H^2)$ height-$H$ Farey
gaps. For each gap $J$, choose endpoint representatives $u,v$ as in
Lemma \ref{lem:farey-gap}. Then
$\height(u+v)>H$, and Proposition \ref{prop:gap-turn} gives
$$
\Turn(J)
\leq
C_XH e^{-\kappa_X\height(u+v)}
\leq
C_XH e^{-\kappa_XH}.
$$
Summing over all height-$H$ gaps gives
$$
\Turn\left(
\partial B_X\setminus
\left\{
\pm\frac{w}{\|w\|_X}:\bar w\in\ProjPrim_H
\right\}
\right)\leq C_XH^3e^{-\kappa_XH}.
$$
Choose
$
\kappa_X'=\frac{\kappa_X}{2}.
$
Then, for every $H\geq 1$,
$$
H^3e^{-\kappa_XH}
=
\left(H^3e^{-\kappa_X'H}\right)e^{-\kappa_X'H}
\leq
\left(\frac{3}{e\kappa_X'}\right)^3 e^{-\kappa_X'H}.
$$
Thus the factor $H^3$ can be absorbed by weakening the exponential rate.
After enlarging $C_X$ and relabeling $\kappa_X'$ as $\kappa_X$, we get the
claimed estimate.
\end{proof}

\begin{remark}
\label{rem:tail-scale-sharpness}
The exponential scale in Theorem~\ref{thm:tail-turn} is sharp up to the value
of the exponent. Indeed, choose $w_H\in\Prim$ with
$
H<\height(w_H)\leq H+1.
$
Since the two corner atoms at
$
\pm w_H/\|w_H\|_X
$
remain in the height-$H$ tail, Proposition~\ref{prop:rational-corner-atoms}
gives
$$
\Turn\left(
\partial B_X\setminus
\left\{\pm\frac{u}{\|u\|_X}:\bar u\in\ProjPrim_H\right\}
\right)
\geq
2\alpha_X(w_H)
\geq
c_Xe^{-C_XH}.
$$
Thus the tail cannot decay faster than exponentially in this formulation. The
precise exponent is not used later.
\end{remark}

\section{Lattice counting and proof of Theorem \ref{thm:main}}
\label{sec:lattice-count}
\label{sec:proof-main}

We now convert the exponential normal-turn estimate into a lattice-point bound.
The key input is a localized version of Jarník's chord-counting argument
\cite{Jarnik}. In Jarník's global estimate, the angular range of chord
directions is bounded by an absolute constant and is absorbed into the implied
constant. Here we keep this angular range: if the outward normal directions
along a convex arc $A$ sweep only a small sector, then the tangent directions,
and hence the primitive chord directions between consecutive lattice points,
also lie in a small sector. Since short primitive lattice directions are sparse
in such a sector, many lattice points force many long chords. This gives the
extra factor $\Turn(A)^{1/3}$ in the localized estimate below. We also use the
standard monotonicity of normal directions along convex curves
\cite{SchneiderConvexBodies}.

\begin{lemma}[Lattice points on a small-turn arc]
\label{lem:lattice-sector}
Let $A$ be a connected arc of Euclidean arclength $\Len(A) \le R$ in the boundary of a
strictly convex plane body. Assume that
$
\Turn(A)\leq \theta,$ where $ 0\leq \theta\leq 2\pi.
$
Then there is a universal constant $C>0$ such that
$$
\#(A\cap\mathbb Z^2)
\leq
C\left(R^{2/3}\theta^{1/3}+1\right).
$$
\end{lemma}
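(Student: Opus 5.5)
Let $P_0,P_1,\dots,P_N$ be the lattice points of $A$, listed in order along $A$. The aim is to bound $N$ by a localized version of Jarník's argument. Strict convexity gives $N+1$ distinct points with $N$ nonzero chords $d_i=P_i-P_{i-1}\in\mathbb Z^2$. Each chord is no longer than the subarc it subtends, so
\[
\sum_{i=1}^N |d_i|\le \Len(A)\le R .
\]
The chord direction $d_i/|d_i|$ is a tangent (support) direction somewhere on the subarc from $P_{i-1}$ to $P_i$, and it is perpendicular to some outward normal on that subarc. Tangent directions along a convex arc turn monotonically, and they sweep exactly the angle $\Turn(A)$. Hence all the directions $d_i/|d_i|$ lie in a closed angular sector $S$ of opening $\theta$.

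Strict convexity and monotonicity of the tangent direction also force the chord directions to be strictly monotone in $i$. Two consecutive chords with the same direction would put three collinear points on $A$, which strict convexity rules out. Even nonconsecutive chords cannot repeat a direction once the total turn is less than $\pi$. If $\theta\ge\pi$, I would first cut $A$ into at most $7$ subarcs, each with turn at most $\theta/7<\pi$; this costs only a constant factor. So the vectors $d_i$ are pairwise distinct nonzero integer vectors, in fact with pairwise distinct directions, all lying in the sector $S$.

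The counting step is as follows. Let $M(r)$ be the number of nonzero integer vectors in $S$ of Euclidean length at most $r$. Since $S\cap\{|x|\le r\}$ is a circular sector of area $\theta r^2/2$, I expect
\[
M(r)\le C(\theta r^2+r+1).
\]
The term $r$ accounts for lattice points near the boundary rays; alternatively, counting primitive directions within a sector, each ray carries at least one point. This bound is the step needing care: thin sectors have poor lattice-point counts. A clean way to get it is to cover $S\cap\{|x|\le r\}$ by a rectangle of dimensions $r\times \theta r$. Such a rectangle contains at most $C(\theta r^2+r+1)$ lattice points, by counting lattice lines parallel to a coordinate axis (each meets the rectangle in $O(\theta r+1)$ points). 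Given this, I sort the $|d_i|$ increasingly. Because the $d_i$ are distinct, the $k$-th smallest length $r_k$ satisfies $M(r_k)\ge k$, so
\[
r_k\ge c\min\{\sqrt{k/\theta},\,k\}
\]
for $k\ge 2C$.

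There are then two regimes. If $N\le 1/\theta$, then $r_k\gtrsim k$ for all $k$, so $R\ge\sum r_k\gtrsim N^2$ and $N\lesssim R^{1/2}$. If $N>1/\theta$, then $\sum_{k\le N} r_k\gtrsim N^{3/2}\theta^{-1/2}$, so $N\lesssim R^{2/3}\theta^{1/3}$.

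In the first regime the bound $R^{1/2}$ has to be reconciled with the target. It suffices to note that $N\le 1/\theta$ and $N\lesssim R^{1/2}$ together give $N^3\lesssim R\,N\le R\cdot N^2\theta$ only when combined carefully, and that route does not close. Instead I would use distinct directions: with pairwise distinct primitive-like directions inside a sector of angle $\theta$, two integer vectors $d_i,d_j$ have $1\le|\det(d_i,d_j)|\le |d_i||d_j|\sin\angle(d_i,d_j)$. This makes lengths grow, because the vectors of length at most $r$ with distinct directions in $S$ number at most $C(\theta r^2+1)$ (the directions are separated by angle at least $1/r^2$). Rerunning the sorting argument with this improved bound gives $r_k\gtrsim\sqrt{k/\theta}$ for all $k\ge C$, hence $N\lesssim R^{2/3}\theta^{1/3}+1$ uniformly. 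I expect this distinct-direction counting bound, $M^*(r)\le C(\theta r^2+1)$, to be the main point to verify. Its proof is the separation $\angle(d_i,d_j)\ge 1/(|d_i||d_j|)$, combined with a dyadic decomposition in $r$.
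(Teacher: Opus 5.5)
Your final argument is correct and is essentially the paper's proof. After cutting into boundedly many subarcs of small turn, strict convexity makes the chord directions pairwise distinct inside a sector of opening at most $\theta$. The separation $1\le|\det(d_i,d_j)|\le r^2\angle(d_i,d_j)$ then caps the number of chords of length at most $r$ by $\theta r^2+1$, and $\sum|d_i|\le R$ forces $N\lesssim R^{2/3}\theta^{1/3}+1$. Your sorting of the $r_k$ replaces the paper's single threshold $r=(N/4C_0\theta)^{1/2}$, and keeping full chords instead of primitive parts changes nothing. The preliminary bound $M(r)\le C(\theta r^2+r+1)$ with its two regimes, and the dyadic decomposition, can simply be deleted, since consecutive directions are separated by angle at least $r^{-2}$, which gives $M^*(r)\le\theta r^2+1$ directly.
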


\begin{proof}
The proof has three steps: a sector count for primitive directions, its
application to consecutive chords on a convex graph arc, and a reduction of a
general convex arc to at most four small-turn arcs, where the piecewise estimates are combined
by H\"older's inequality.

\textbf{Step 1.} We first count primitive lattice directions in a fixed
angular sector. Let $I$ be an angular
sector of length $\theta$, and let $r\geq 1$. Split $I$ into subintervals
$I_1,\ldots,I_s$, where $s\leq 4$ and $|I_j|\leq \pi/2$. Let $M_j$ be the
number of oriented primitive directions in $I_j$ represented by vectors of
Euclidean norm at most $r$.

Fix $j$, and list these directions as primitive vectors
$z_1,\ldots,z_{M_j}$ in angular order. If $M_j\leq 1$, then
$M_j\leq |I_j|r^2+1$. Otherwise, for consecutive directions, put
$\phi_i=\angle(z_i,z_{i+1})$. Since $z_i$ and $z_{i+1}$ are
distinct primitive lattice directions,
$$
1\leq |\det(z_i,z_{i+1})|
=
|z_i||z_{i+1}|\sin\phi_i
\leq r^2\phi_i.
$$
Thus $\phi_i\geq r^{-2}$ for every $i$. Since the first and last primitive directions need not lie at the endpoints of $I_j$,
$$
|I_j|
\geq
\sum_{i=1}^{M_j-1}\phi_i
\geq
(M_j-1)r^{-2}.
$$
Therefore, if $M$ is the total number of such directions in $I$, then
\begin{equation}\label{eq:SectorCount}
   M\leq \sum_{j=1}^s M_j
\leq r^2\sum_{j=1}^s |I_j|+s
\leq \theta r^2+4
\leq C_0(\theta r^2+1).
\end{equation}

\textbf{Step 2.} We prove the estimate for an arc $A$ with
$\Turn(A)=\theta_A\leq \pi/2$. Let
$P_0,\ldots,P_N$ be the lattice points of $A$, listed in their order along
the arc. If $N\leq 2$, there is nothing to prove. Write
$$
P_{i+1}-P_i=t_i d_i,
$$
where $t_i\in \mathbb Z_{\geq 1}$ and $d_i\in \mathbb Z^2$ is primitive. Since the outward normal directions swept by $A$ lie in an angular interval
of length $\theta_A$, the tangent directions also lie in an angular interval
of length $\theta_A$. By convexity, each chord direction $d_i$ lies between
the tangent directions swept between $P_i$ and $P_{i+1}$. Hence all $d_i$
lie in one angular sector of length at most $\theta_A$. Strict convexity implies that the directions $d_i$ are distinct. Moreover,
\begin{equation}\label{eq:ChordDirectionBound}
   \sum_{i=0}^{N-1}|d_i|
\leq
\sum_{i=0}^{N-1}|P_{i+1}-P_i|
\leq R. 
\end{equation}

Assume $\theta_A>0$; the case $\theta_A=0$ is immediate. Choose
$$
r=\left(\frac{N}{4C_0\theta_A}\right)^{1/2}.
$$
If $r<1$ or $N<4C_0$, then $N$ is universally bounded. Thus we
may assume $r\geq 1$ and $N\geq 4C_0$. Let $M$ be the number of indices $i$ such that $|d_i|\leq r$. Since the
directions $d_i$ are distinct and all lie in an angular sector of length at
most $\theta_A$, the sector count in (\ref{eq:SectorCount}) gives
$$
M\leq C_0(\theta_A r^2+1).
$$
By the choice of $r$,
$$
C_0(\theta_A r^2+1)
=
C_0\left(\frac{N}{4C_0}+1\right)
=
\frac N4+C_0
\leq
\frac N2,
$$
where the last inequality uses $N\geq 4C_0$. Thus at most $N/2$ of the
vectors $d_i$ have length at most $r$.
Consequently at least $N/2$ of the $N$ vectors $d_i$ satisfy $|d_i|>r$.
Therefore, from (\ref{eq:ChordDirectionBound}),
$$
R
\geq
\sum_{i=0}^{N-1}|d_i|
\geq
\frac N2 r.
$$
Hence, for the graph case, we obtain
$$
\#(A\cap\mathbb Z^2) = N+1 \leq \frac{2R}{r}+1  \leq C(R^{2/3}\theta^{1/3}+1).
$$
\textbf{Step 3.} We reduce the general case to the case in Step 2. Orient the arc $A$. By the monotonicity of outward normal directions along a convex curve,
the outward normals swept along $A$ form a circular interval of length
$\Turn(A)\leq\theta$. Split this swept normal interval into at most four angular sectors of length
at most $\pi/2$, and cut $A$ into the corresponding connected subarcs $A_j$.

Set
$$
R_j=\Len(A_j),
\qquad
\theta_j=\Turn(A_j).
$$
Since cutting can only discard endpoint corner atoms, we have
$$
\sum_j R_j \le R,
\qquad
\sum_j \theta_j\leq \Turn(A)\leq\theta.
$$
Applying Step 2 to each $A_j$ and summing gives
$$
\#(A\cap\mathbb Z^2)
\leq
C\sum_j\left(R_j^{2/3}\theta_j^{1/3}+1\right).
$$
By H\"older's inequality,
$$
\sum_j R_j^{2/3}\theta_j^{1/3}
\leq
\left(\sum_j R_j\right)^{2/3}
\left(\sum_j \theta_j\right)^{1/3}
\leq
R^{2/3}\theta^{1/3}.
$$
Since the number of subarcs is at most four, this proves the lemma.
\end{proof}

\begin{proof}[Proof of Theorem \ref{thm:main}]
Let $\kappa_X>0$ be the constant from Theorem \ref{thm:tail-turn}. Fix $L\geq 2$, and let $H\geq 1$ be an integer to be chosen later. We separate the rational boundary points of height at most $H$. These points
may carry corner atoms, while the remaining boundary has exponentially small
normal turn by Theorem \ref{thm:tail-turn}. This is exactly where the localized
Jarník estimate, Lemma \ref{lem:lattice-sector}, improves the count.

Set
$$
P_H  \overset{\mathrm{def}}{=} 
\left\{
\pm\frac{w}{\|w\|_X}:\bar w\in\ProjPrim_H
\right\}
\subset \partial B_X.
$$
Since $P_H$ is indexed, up to a factor of $2$, by primitive vectors in
$[-H,H]^2\cap\mathbb Z^2$, we have
$$
\#P_H\leq C H^2.
$$
The dilation map $p\mapsto Lp$ is a bijection from $P_H$ onto $LP_H$. Hence
\begin{equation}\label{eq:EstimateRPH}
\#(LP_H\cap\mathbb Z^2)
\leq
\#(LP_H)
=
\#P_H
\leq
C H^2.
\end{equation}
Let $A_1,\ldots,A_m$ be the connected components of
$\partial B_X\setminus P_H$. Since $\#P_H\leq C H^2$, we have
$$
m\leq C H^2.
$$
For each $j$, set
$$
R_j=\Len(LA_j),
\qquad
\theta_j=\Turn(A_j),
$$
where endpoint atoms are not counted. By Theorem \ref{thm:tail-turn},
\begin{equation}\label{eq:ThetaSumMain}
\sum_{j=1}^m\theta_j
\leq
C_Xe^{-\kappa_XH}.
\end{equation}
The dilation $x\mapsto Lx$ multiplies Euclidean arclength by $L$ and preserves
outward normal directions. Hence
$$
R_j=\Len(LA_j)=L\Len(A_j),
\qquad
\Turn(LA_j)=\Turn(A_j)=\theta_j.
$$
Since $P_H$ is finite, it has zero arclength, and hence
\begin{equation}\label{eq:LengthSumMain}
\sum_{j=1}^m R_j
=
L\sum_{j=1}^m\Len(A_j)
=
L\Len(\partial B_X)
\leq
C_XL.
\end{equation}
Applying Lemma \ref{lem:lattice-sector} to the arcs $LA_j$ and summing gives
$$
\#\bigl(L(\partial B_X\setminus P_H)\cap\mathbb Z^2\bigr)
\leq
C\sum_{j=1}^m R_j^{2/3}\theta_j^{1/3}
+
Cm.
$$
By H\"older's inequality,
$$
\sum_{j=1}^m R_j^{2/3}\theta_j^{1/3}
\leq
\left(\sum_{j=1}^m R_j\right)^{2/3}
\left(\sum_{j=1}^m\theta_j\right)^{1/3}.
$$
Using \eqref{eq:ThetaSumMain}, \eqref{eq:LengthSumMain}, and $m\leq CH^2$, we
obtain
$$
\#\bigl(L(\partial B_X\setminus P_H)\cap\mathbb Z^2\bigr)
\leq
C_XL^{2/3}e^{-\kappa_XH/3}
+
CH^2.
$$
Together with \eqref{eq:EstimateRPH}, this gives
$$
\#(L\partial B_X\cap\mathbb Z^2)
\leq
C_XL^{2/3}e^{-\kappa_XH/3}
+
2CH^2.
$$
We now choose $H$ so that the exponential term is bounded, namely
$$
H=\left\lceil \frac{2}{\kappa_X}\log L\right\rceil .
$$
Then
$
L^{2/3}e^{-\kappa_XH/3}\leq 1,
$ and $
H\leq C_X\log L,
$
and hence
$$
\#(L\partial B_X\cap\mathbb Z^2)
\leq C_X(\log L)^2.
$$
Every primitive vector $w$ with $\|w\|_X=L$ lies on
$L\partial B_X$. Since $w$ and $-w$ represent the same unoriented simple closed
geodesic,
\begin{equation}\label{eq:MulBound}
  m_X(L)
=
\frac12\#(L\partial B_X\cap\Prim)
\leq
\frac12\#(L\partial B_X\cap\mathbb Z^2)
\leq
C_X(\log L)^2.  
\end{equation}
\end{proof}

\begin{remark}[The active-gap obstruction]
\label{rem:active-gap-loss}
For $H\geq 1$, let $\mathcal A_H(L)$ be the set of height-$H$ Farey gaps
that contain a projective primitive class $\bar w$ with $\|w\|_X=L$. Then the
proof above gives the sharper estimate
$$
m_X(L)
\leq
C_XL^{2/3}e^{-\kappa_XH/3}
+
C_X\#\mathcal A_H(L)
+
C_X\#\{\bar u\in\ProjPrim_H:\|u\|_X=L\}.
$$
Indeed, the last term counts the level-$L$ classes of height at most $H$. All remaining
level-$L$ classes lie in active height-$H$ gaps. Hence the additive $+1$ in
Lemma \ref{lem:lattice-sector} is charged only to those gaps, while the
$R^{2/3}\theta^{1/3}$ terms are summed as before using the tail-turn estimate.

By the upper bound in the norm comparison \eqref{eq:NormComparison}, there is
$\eta_X>0$ such that, if $H\leq \eta_XL$, then every
$\bar u\in\ProjPrim_H$ satisfies $\|u\|_X<L$. Under this condition, the last
term vanishes. Taking $H=\lceil A\log L\rceil$ with any fixed
$A>2/\kappa_X$, the first term is bounded independently of $L$, and the
condition $H\leq \eta_XL$ holds for all sufficiently large $L$. After enlarging
$C_X$ to absorb the remaining bounded range of $L$, we obtain
$$
m_X(L)
\leq
C_X\left(1+\#\mathcal A_{\lceil A\log L\rceil}(L)\right).
$$
Thus, within this approach, the remaining loss is the crude estimate
$$
\#\mathcal A_H(L)\leq C H^2,
$$
which uses only the number of height-$H$ gaps. Any improvement below $(\log L)^2$ therefore requires input beyond the convex-geometric estimate.
\end{remark}

\section{Local boundary geometry}
\label{sec:local-regularity}
In this section we prove Theorem \ref{thm:intro-local-regularity}. The main
input is the exponential turn estimate from the previous section: rational
directions of large height contribute only very small total bending to
$\partial B_X$. We use this to understand the local shape of $\partial B_X$. 

\subsection{Rational corners and unique supporting lines at irrational directions}
For $w\in\Prim$, define $\alpha_X(w)$ as the corner atom at $w$, equivalently
the exterior angle of $\partial B_X$ at the point $w/\|w\|_X$. We first prove
that rational corner atoms are comparable to
$$
\height(w)e^{-\|w\|_X}.
$$
The exponential tail estimate then implies that the normal-turn measure has no
remaining contribution over irrational directions.

\begin{proposition}[Rational corner atoms]
\label{prop:rational-corner-atoms}
There are constants $c_X,C_X>0$ such that, for every $w\in\Prim$,
$$
c_X\height(w)e^{-\|w\|_X}
\leq
\alpha_X(w)
\leq
C_X\height(w)e^{-\|w\|_X}.
$$
Consequently, there are constants $c_X,C_X,\kappa_X>0$ such that
$$
c_Xe^{-C_X\height(w)}
\leq
\alpha_X(w)
\leq
C_Xe^{-\kappa_X\height(w)}.
$$
\end{proposition}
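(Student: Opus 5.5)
The plan is to compute the corner atom exactly from the Fricke recurrence, with no asymptotic error, using the same machinery as in the proof of Proposition~\ref{prop:functional-estimate}. Fix $w\in\Prim$ and choose a Farey neighbour $v\in\Prim$ with $\det(w,v)=1$. The supporting functionals at $p_w$ form an interval whose two endpoints are the one-sided functionals $\lambda_{w|v}$ and $\lambda_{w|-v}$. This holds because every element of $\partial_X(w)$ takes the value $\|w\|_X$ on $w$, so the interval is parametrized by the value on $v$. Its maximum is $\lambda_{w|v}(v)$ and its minimum is $-\lambda_{w|-v}(-v)$. So $\alpha_X(w)$ is the angle between $\nu(\lambda_{w|v})$ and $\nu(\lambda_{w|-v})$. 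This angle is the smaller one once we know it is small, by the same $\pi$-argument as in Proposition~\ref{prop:gap-turn}: the point is a single corner and the antipodal corner carries the same atom.

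\textbf{Step 1: exact formula for $\eta(v)$.} Put $a=\|w\|_X/2$, $x=x_w=2\cosh a$, $y=x_v$, $z=x_{w+v}$, $z'=x_{w-v}$. Solving the recurrence $x_{(n+1)w+v}+x_{(n-1)w+v}=x\,x_{nw+v}$ gives
$$
x_{nw+v}=Ae^{na}+O(e^{-na}),\qquad A=\frac{z-ye^{-a}}{e^a-e^{-a}}.
$$
Since every $x_{nw+v}>2$, we get $A>0$. By \eqref{eq:FanComputing}, $\lambda_{w|v}(v)=2\log A$. Running the recurrence along $nw-v$ in the same way gives $\lambda_{w|-v}(-v)=2\log A'$, with $A'=(z'-ye^{-a})/(e^a-e^{-a})>0$. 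This step needs no sink hypothesis, unlike Proposition~\ref{prop:functional-estimate}. Set $\eta=\lambda_{w|v}-\lambda_{w|-v}$. Then $\eta(w)=0$ and $\eta(v)=2\log(AA')$. Using $z+z'=xy$ and $zz'=x^2+y^2$, a direct computation gives
$$
(z-ye^{-a})(z'-ye^{-a})=x^2+y^2\bigl(1-xe^{-a}+e^{-2a}\bigr)=x^2 .
$$
Hence $AA'=\coth^2 a$, and
$$
\delta:=\eta(v)=4\log\coth\bigl(\|w\|_X/2\bigr).
$$
Since $\|w\|_X\ge\eta_X>0$, this satisfies $c_Xe^{-\|w\|_X}\le\delta\le C_Xe^{-\|w\|_X}$.

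\textbf{Step 2: from functionals to angles.} Because $\eta(w)=0$, $\eta(v)=\delta$ and $\det(w,v)=1$, we have $\eta(y)=\delta\det(w,y)$. So $\eta^\sharp$ is $\delta$ times $w$ rotated by a right angle, and $|\eta|=\delta|w|$. This is comparable to $\delta\,\height(w)$.

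For the upper bound, Lemma~\ref{lem:functional-angle} gives
$$
\alpha_X(w)\le K_X\delta|w|\le C_X\height(w)e^{-\|w\|_X}.
$$
This is $<1$ for large $\height(w)$, which justifies taking the smaller angle. The finitely many remaining $w$ are absorbed into the constants, since their corner atoms are positive by Sorrentino--Veselov, or by positivity of $\delta$.

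For the lower bound, write $\det(\lambda^\sharp_{w|v},\lambda^\sharp_{w|-v})=\det(\lambda^\sharp_{w|v},\eta^\sharp)$ up to sign. With $\eta^\sharp=\delta\,Jw$, where $J$ is the rotation by a right angle, this equals $\pm\delta\langle\lambda^\sharp_{w|v},w\rangle=\pm\delta\|w\|_X$. So the lower bound in Lemma~\ref{lem:functional-angle} gives
$$
\alpha_X(w)\ge c_X\delta\|w\|_X\ge c_X\height(w)e^{-\|w\|_X}
$$
by \eqref{eq:NormComparison}.

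\textbf{Step 3: the consequence.} The second display follows from \eqref{eq:NormComparison}. For the upper bound, $\height(w)e^{-c_X\height(w)}\le C_Xe^{-\kappa_X\height(w)}$ with $\kappa_X=c_X/2$. For the lower bound, $\height(w)e^{-C_X\height(w)}\ge e^{-C_X\height(w)}$.

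The main obstacle I expect is Step 1: the identification of the endpoints of $\partial_X(w)$ with $\lambda_{w|\pm v}$ together with the exact product identity $AA'=\coth^2 a$. The identity needs the algebraic cancellation $1-xe^{-a}+e^{-2a}=0$, which I verified above. Everything after that is linear algebra.
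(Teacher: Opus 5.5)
Your proof is correct. The lower bound is the paper's argument: the fan computation along $nw\pm v$, the identity $A_+A_-=\coth^2(\|w\|_X/2)$, the determinant identity $|\det(\lambda^\sharp,\mu^\sharp)|=\delta\|w\|_X$, and the lower half of Lemma~\ref{lem:functional-angle}.

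The upper bound is where you take a different route. The paper does not reuse the fan computation at $w$. It takes the height-$(\height(w)-1)$ Farey gap $J$ containing $\bar w$, whose endpoints satisfy $w=u+v$, and bounds $\alpha_X(w)\le\Turn(J)$ via Proposition~\ref{prop:gap-turn}. That proposition rests on Proposition~\ref{prop:functional-estimate} and on Lemma~\ref{lem:large-root}, hence on Bowditch's sink theorem, and the heights up to $H_X$ are handled separately.

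You instead observe that $\eta=\lambda_{w|v}-\lambda_{w|-v}$ is determined by $\eta(w)=0$ and $\eta(v)=\delta$. So $\eta=\delta\det(w,\cdot)$ and $|\eta|=\delta|w|$, and the upper half of Lemma~\ref{lem:functional-angle} finishes. This route is shorter and needs no sink hypothesis. It also shows that both bounds come from one exact identity, $\sin\alpha_X(w)=\delta\|w\|_X/(|\lambda^\sharp_{w|v}|\,|\lambda^\sharp_{w|-v}|)$ with $\delta=4\log\coth(\|w\|_X/2)$. The paper's route mainly keeps the atom inside the gap-turn framework used elsewhere. It also gives an intermediate bound $C_X\height(w)e^{-\|u\|_X-\|v\|_X}$, but this is immediately weakened to $e^{-\|w\|_X}$.

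Two small points in your write-up. First, the $\pi$-argument and the exceptional small-height $w$ are unnecessary. The set $\partial_X(w)$ is a segment in the affine line $\{\lambda:\lambda(w)=\|w\|_X\}$, which misses the origin, so its normal directions automatically sweep the smaller arc. Second, to absorb finitely many $w$ into an upper bound you only need $\alpha_X(w)\le\pi$. Positivity of the atoms, which you cite from Sorrentino--Veselov, is irrelevant there.
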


\begin{proof}
We first prove the lower bound. Choose $u\in\Prim$ with $\det(w,u)=1$. Put
$$
L=\|w\|_X,
\qquad
r=e^{L/2}.
$$
Let $\lambda=\lambda_{w|u}$ and $\mu=\lambda_{w|-u}$ be the two one-sided
supporting functionals at $w$, selected by the two sides $u$ and $-u$. As in the fan computation in the proof of Proposition
\ref{prop:functional-estimate}, especially \eqref{eq:FanComputing}, solving
the Fricke recurrence along the two rays $nw+u$ and $nw-u$ gives
$$
\lambda(u)=2\log A_+,
\qquad
\mu(u)=-2\log A_-,
$$
where
$$
A_+
=
\frac{x_{w+u}-r^{-1}x_u}{r-r^{-1}},
\qquad
A_-
=
\frac{x_{w-u}-r^{-1}x_u}{r-r^{-1}}.
$$
The Fricke identities (Theorem \ref{thm:fricke-bowditch})
$$
x_{w+u}+x_{w-u}=x_wx_u,
\qquad
x_{w+u}x_{w-u}=x_w^2+x_u^2.
$$
give
$$
A_+A_-
=
\frac{(x_{w+u}-r^{-1}x_u)(x_{w-u}-r^{-1}x_u)}{(r-r^{-1})^2}
=
\frac{x_w^2}{(r-r^{-1})^2}
=
\coth^2\frac{L}{2}.
$$
Hence
\begin{equation}\label{eq:Difference}
\lambda(u)-\mu(u)
=
2\log A_+ +2\log A_-
=
4\log\coth\frac{L}{2}.    
\end{equation}
By multiplicativity of determinants, for any $\lambda,\mu\in(\mathbb R^2)^*$
and any $w,u\in\mathbb R^2$,
$$
\det
\begin{pmatrix}
\lambda(w) & \lambda(u)\\
\mu(w) & \mu(u)
\end{pmatrix}
=\det\bigl((\lambda^\sharp\ \mu^\sharp)^T(w\ u)\bigr)
=
\det(\lambda^\sharp,\mu^\sharp)\det(w,u).
$$
Since $\det(w,u)=1$, the two remaining determinants are equal. Moreover,
$\lambda,\mu\in\partial_X(w)$ implies $\lambda(w)=\mu(w)=L$. Therefore, using
\eqref{eq:Difference},
$$
|\det(\lambda^\sharp,\mu^\sharp)|
=
\left|
\det
\begin{pmatrix}
\lambda(w) & \lambda(u)\\
\mu(w) & \mu(u)
\end{pmatrix}
\right|
=
L|\lambda(u)-\mu(u)|
=
4L\log\coth\frac{L}{2}.
$$
 By definition in \eqref{eq:Lambda}, $\lambda=\lambda_{w|u}$ and $\mu=\lambda_{w|-u}$ are the two
supporting functionals in $\partial_X(w)$ with maximal and minimal value on
$u$, respectively. Hence they give the two endpoint normal directions at
$p_w$. Therefore the acute angle between $\nu(\lambda)$ and $\nu(\mu)$ is the corner
atom $\alpha_X(w)$. By Lemma \ref{lem:functional-angle} and the determinant computation above,
\begin{equation}\label{eq:Alpha}
    \alpha_X(w)
\geq
c_XL\log\coth\frac{L}{2}.
\end{equation}

We use the elementary inequality
$$
\log\coth(L/2)\geq 2e^{-L}
$$
valid for all $L>0$. Combining with \eqref{eq:NormComparison} and \eqref{eq:Alpha}, we have
$$
\alpha_X(w)\geq c_XLe^{-L}
\geq c_X\height(w)e^{-L}.
$$
For the upper bound, let $H_X$ be the cutoff in Proposition
\ref{prop:gap-turn}, and put $h=\height(w)$. The finitely many cases
$h\leq H_X$ are absorbed by enlarging $C_X$, so assume $h\geq H_X+1$.
Let $J$ be the height-$(h-1)$ Farey gap containing the direction of $w$.
Its endpoints can be represented by some $u,v\in\Prim$ with
$$
\det(u,v)=1,
\qquad
w=u+v.
$$
The atom $\alpha_X(w)$ is contained in the turn of this gap. Since
$h-1\geq H_X$, Proposition \ref{prop:gap-turn} gives
$$
\alpha_X(w)
\leq
\Turn(J)
\leq
C_X(h-1) e^{-\|u\|_X-\|v\|_X}
\leq
C_Xhe^{-\|w\|_X}.
$$
Finally, applying the norm comparison \eqref{eq:NormComparison} to
$\|w\|_X$ gives constants $c_X,C_X,\kappa_X>0$ such that
$$
c_Xe^{-C_X\height(w)}
\leq
\alpha_X(w)
\leq
C_Xe^{-\kappa_X\height(w)}.
$$
\end{proof}

\begin{remark}
\label{rem:explicit-computability-corner-angle}
The fan computation in the proof gives more than the estimate needed below.
After choosing $u\in\Prim$ with $\det(w,u)=1$, it computes the two endpoint
supporting functionals
$
\lambda=\lambda_{w|u},
\mu=\lambda_{w|-u}
$
from the trace labels $x_u,x_{w+u},x_{w-u}$ and the length $\|w\|_X$. Hence,
under the fixed Euclidean identification $(\mathbb R^2)^*\cong\mathbb R^2$,
the corner angle is
$$
\alpha_X(w)
=
\arccos
\frac{\langle\lambda^\sharp,\mu^\sharp\rangle}
{|\lambda^\sharp||\mu^\sharp|}.
$$
Thus $\alpha_X(w)$ is explicitly computable from these trace labels and the
fixed integral coordinates of $w$ and $u$. The value is independent of the
auxiliary choice of $u$. In this paper we only use the estimate in Proposition \ref{prop:rational-corner-atoms}.
\end{remark}

\begin{proposition}
\label{prop:irrational-differentiability}
At each boundary point lying over an irrational direction, $\partial B_X$
has a unique supporting line.
\end{proposition}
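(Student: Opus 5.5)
Fix an irrational direction $\beta$ and a point $p\in\beta\cap\partial B_X$; by central symmetry it suffices to treat one of the two antipodal points. A boundary point of a planar convex body has a unique supporting line if and only if it is not a corner, that is, if and only if its corner atom $\Turn(\{p\})$ vanishes. So the plan is to show $\Turn(\{p\})=0$ using the exponential tail estimate of Theorem~\ref{thm:tail-turn}.

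The key observation is that $p$ never belongs to the finite set
$$
P_H=\left\{\pm\frac{w}{\|w\|_X}:\bar w\in\ProjPrim_H\right\},
$$
for any $H$. Indeed, every point of $P_H$ lies over a rational direction, while $p$ lies over the irrational direction $\beta$, and the radial projection $\partial B_X\to\mathbb P^1(\mathbb R)$ is well defined. Hence, for every $H\ge 1$,
$$
\{p\}\subset \partial B_X\setminus P_H .
$$
By monotonicity of the finite Borel measure $\Turn$ and Theorem~\ref{thm:tail-turn},
$$
\Turn(\{p\})\le \Turn(\partial B_X\setminus P_H)\le C_Xe^{-\kappa_XH}.
$$
Letting $H\to\infty$ gives $\Turn(\{p\})=0$.

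It remains to translate this into uniqueness of the supporting line. By the definition of the normal-turn measure in Section~\ref{sec:support-turn}, $\Turn(\{p\})$ is the angular length of the interval of outward normal directions $\nu(\lambda)$, $\lambda\in\partial_X(a)$, where $p=p_a$. Hence all supporting functionals at $p$ have the same outward normal direction $\nu$. Every such $\lambda$ also satisfies $\lambda(p)=1$, so $\lambda^\sharp=t\nu$ with $t\langle\nu,p\rangle=1$. This determines $t$ uniquely, so $\partial_X(a)$ is a single functional and the supporting line is unique.

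The only point needing care is the identification in the last step: that vanishing of the atom means the subdifferential is a singleton. I would verify it directly as above rather than appeal to general curvature-measure theory. In particular, the tail estimate of Theorem~\ref{thm:tail-turn} does not omit atoms at irrational points, since only rational points are removed there.
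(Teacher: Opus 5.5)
Your proof is correct and takes essentially the same route as the paper. Both arguments use the exponential tail estimate of Theorem~\ref{thm:tail-turn} together with monotonicity of $\Turn$, and both rely on the fact that $p$ lies over an irrational direction and so is never removed. The paper bounds the turn of a shrinking arc $\Gamma_{I(H)}$ around $p$ and concludes that the two one-sided supporting lines coincide; you bound the atom $\Turn(\{p\})$ directly and then spell out why a zero atom forces $\partial_X(a)$ to be a single functional.
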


\begin{proof}
Let $\beta$ be an irrational
direction, and let $p\in \beta \cap \partial B_X$.  Since
$\beta$ is irrational, for any $H>0$, there is a projective interval $I(H)$ containing $\beta$
and containing none of rational directions of height at most $H$. Let $\Gamma_{I(H)}\subset\partial B_X$ be the boundary
arc lying over $I(H)$ containing $p$. Then $\Gamma_{I(H)}$ is contained in the height-$H$ tail, so
Theorem \ref{thm:tail-turn} gives
$$
\Turn(\Gamma_{I(H)})\leq C_Xe^{-\kappa_XH},
$$
for some positive constants $C_X,\kappa_X$. Since $C_Xe^{-\kappa_XH}\rightarrow 0$ when $H\rightarrow \infty$, the total turn near $p$ can be made arbitrarily small. Hence the two one-sided supporting lines at $p$ have angle zero, so they coincide. Therefore $\partial B_X$ has a unique supporting line at $p$.
\end{proof}

\subsection{Flatness order at irrational directions.}

Before quantifying the flatness order at irrational directions, we introduce
some local notions that will be used throughout this section. We first recall
the following graph notion of flatness, following McShane--Rivin
\cite{McShaneRivinTori}.

\begin{definition}[Graph flatness order]
\label{def:graph-flatness}
Let $p\in\partial B_X$ be a boundary point with a unique supporting line.
Choose Euclidean coordinates centered at $p$ such that this supporting line is
$y=0$ and $B_X$ lies locally in $y\geq0$. Write the boundary locally as
$$
y=f(x),
\qquad
f(0)=0.
$$
\begin{itemize}
    
\item For $N>0$, we say that $\partial B_X$ is flat to order at least $N$ at $p$ if
there exists $\delta>0$ such that
$$
f(x)\leq |x|^N
\qquad
\text{whenever } |x|<\delta.
$$
\item The flatness order at $p$ is
$$
\mathfrak f_X(p)
=
\sup\{N>0:\partial B_X\text{ is flat to order at least }N\text{ at }p\},
$$
with value $\infty$ if the set is unbounded.

\item For an irrational direction $\beta$, define
$$
\mathfrak f_X(\beta)=\mathfrak f_X(p)
$$
for either point $p\in\beta\cap\partial B_X$. This is well-defined because
the two points of $\beta\cap\partial B_X$ are antipodal and $B_X$ is centrally
symmetric.

\end{itemize}
\end{definition}

Fix the standard angular distance $d_{\mathbb P^1}$ on $\mathbb P^1(\mathbb R)$.
Let
\begin{equation}\label{eq:radialProjection}
   \pi:\partial B_X\to \mathbb P^1(\mathbb R),
\qquad
\pi(p)=[p], 
\end{equation}

be the radial projection. For an irrational direction $\beta$ and
$\rho>0$, set
\begin{equation}\label{eq:Gamma}
    \Gamma_\beta(\rho)
=
\pi^{-1}
\{\theta\in\mathbb P^1(\mathbb R):d_{\mathbb P^1}(\theta,\beta)<\rho\}.
\end{equation}
For $\rho$ sufficiently small, $\Gamma_\beta(\rho)$ is the disjoint union of
two antipodal connected boundary arcs.

Define the first-entry height by
\begin{equation}\label{eq:fristEntry}
H_\beta(\rho)
=
\min\{\height(\bar w):\bar w\in\Prim/\{\pm1\},
0<d_{\mathbb P^1}(\bar w,\beta)<\rho\}.
\end{equation}

Define the exponential approximation exponent at an irrational direction $\beta$ by
\begin{equation}\label{eq:Omega}
\Omega(\beta)
=
\limsup_{\bar w\to\beta}
\frac{\log(1/d_{\mathbb P^1}(\bar w,\beta))}
{\height(\bar w)}.
\end{equation}
The value of $\Omega(\beta)$ is unchanged if $d_{\mathbb P^1}$ is replaced by
any smooth projective metric.

For $\theta\in\mathbb P^1(\mathbb R)$, choose any nonzero vector
$v\in\theta$, and define the radial value
$$
\tau_X(\theta)
=
\frac{\|v\|_X}{\|v\|_\infty}.
$$
Here $
\|v\|_\infty=\max\{|v_1|,|v_2|\}$ for $v=(v_1,v_2).$
The ratio is independent of the choice of $v$, because both norms are
homogeneous and centrally symmetric. Hence $\tau_X$ is a continuous positive
function on $\mathbb P^1(\mathbb R)$. Equivalently, if $q \in\theta \cap \partial B_X$, then
$$
\tau_X(\theta)=\frac{1}{\|q\|_\infty}.
$$
\begin{proposition}[Localized turn near an irrational direction]
\label{prop:localized-turn-irrational}
Let $\beta$ be an irrational direction. For every $\epsilon>0$, there are
constants $C_{X,\beta,\epsilon}>0$ and $\rho_\epsilon>0$ such that, for all
$0<\rho<\rho_\epsilon$,
$$
\Turn(\Gamma_\beta(\rho))
\leq
C_{X,\beta,\epsilon}
e^{-(\tau_X(\beta)-\epsilon)H_\beta(\rho)}.
$$
\end{proposition}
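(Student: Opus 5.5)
We decompose $\Gamma_\beta(\rho)$ by Farey gaps and sum the one-gap estimate, Proposition~\ref{prop:gap-turn}. The point is to keep the sharper bound $C_X H e^{-\|u\|_X-\|v\|_X}$ rather than the crude $e^{-\kappa_X\height(u+v)}$. The sharper bound lets us replace the comparison constant by the local radial value $\tau_X(\beta)$, using $\|w\|_X=\tau_X([w])\height(w)$ and continuity of $\tau_X$.

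Fix $\epsilon>0$. By continuity of $\tau_X$, choose $\rho_\epsilon$ so small that $\tau_X(\theta)\geq \tau_X(\beta)-\epsilon/2$ for all $\theta$ with $d_{\mathbb P^1}(\theta,\beta)<2\rho_\epsilon$. Shrinking $\rho_\epsilon$, we may also assume that $H:=H_\beta(\rho)-1\geq H_X$ for $\rho<\rho_\epsilon$; this is possible since $H_\beta(\rho)\to\infty$ as $\rho\to0$ because $\beta$ is irrational. The projective interval $\{d(\theta,\beta)<\rho\}$ contains no rational directions of height $\leq H$, so it lies in a single height-$H$ Farey gap $J_0$. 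That is too crude, because the endpoints of $J_0$ may lie far from $\beta$ and their norms are then not controlled by $\tau_X(\beta)$.

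Instead, we cover the $\rho$-neighbourhood by height-$h$ gaps for each $h\geq H$. Every rational direction $\bar w$ in the neighbourhood has $\height(\bar w)\geq H+1$, and its corner atom satisfies $\alpha_X(w)\leq C_X\height(w)e^{-\|w\|_X}\leq C_X h e^{-(\tau_X(\beta)-\epsilon/2)h}$ by Proposition~\ref{prop:rational-corner-atoms}, where $h=\height(w)$. The irrational points carry no atoms. By Proposition~\ref{prop:irrational-differentiability} and the tail theorem, the non-atomic part of $\Turn$ vanishes: for each $H'$, the tail mass is at most $C_Xe^{-\kappa_XH'}$, while the atoms of height $\leq H'$ account for everything else. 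Hence
$$\Turn(\Gamma_\beta(\rho))=2\sum_{\bar w:\,d(\bar w,\beta)<\rho}\alpha_X(w).$$
We now bound this sum by grouping the directions by height $h\geq H_\beta(\rho)$. The number of primitive directions of height exactly $h$ in a projective interval of length $2\rho$ is at most $C(\rho h^2+1)\leq Ch^2$. This gives
$$\Turn(\Gamma_\beta(\rho))\leq C_X\sum_{h\geq H_\beta(\rho)}h^3e^{-(\tau_X(\beta)-\epsilon/2)h}.$$
Absorbing $h^3$ into $e^{\epsilon h/2}$ and summing the geometric series yields $C_{X,\beta,\epsilon}e^{-(\tau_X(\beta)-\epsilon)H_\beta(\rho)}$.

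The main obstacle is justifying that $\Turn$ restricted to $\Gamma_\beta(\rho)$ is purely atomic on rational points, i.e.\ that there is no continuous singular part. I would derive this directly from Theorem~\ref{thm:tail-turn}. For any $H'$, the measure of $\Gamma_\beta(\rho)$ minus the finitely many atoms of height $\leq H'$ is at most $C_Xe^{-\kappa_XH'}$, and the $\epsilon$-shift above absorbs it. Concretely, take $H'\to\infty$: the total turn equals the limit of the atom sums over heights $\leq H'$ plus a vanishing remainder. The second point to check is that $\tau_X$ is evaluated at $[w]$ near $\beta$, not at $\beta$ itself; the choice of $\rho_\epsilon$ handles this.
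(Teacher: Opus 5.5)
Your argument is correct, but it does not follow the paper's route. Your reason for abandoning the single gap is also mistaken, since that single gap is exactly what the paper uses. With $H=H_\beta(\rho)-1$, the paper bounds
$\Turn(\Gamma_\beta(\rho))\le\Turn(J_H)\le C_XHe^{-\|u_H\|_X-\|v_H\|_X}\le C_XHe^{-\|u_H+v_H\|_X}$
by Proposition~\ref{prop:gap-turn} and the triangle inequality. Since $H\to\infty$ as $\rho\to0$ and rational directions are dense, the gap $J_H$ shrinks to $\beta$, so $\tau_X(\overline{u_H+v_H})\to\tau_X(\beta)$. The bound $\height(u_H+v_H)\ge H_\beta(\rho)$ then finishes the proof. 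The endpoints may lie far from $\beta$ on the scale $\rho$, but they still converge to $\beta$, and only $\tau_X$ at the mediant enters.

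Your route is different. You use Theorem~\ref{thm:tail-turn} to show that $\Turn$ is purely atomic on rational boundary points, so that $\Turn(\Gamma_\beta(\rho))=2\sum_{d_{\mathbb P^1}(\bar w,\beta)<\rho}\alpha_X(w)$. You then bound each atom by the upper estimate of Proposition~\ref{prop:rational-corner-atoms}, using $\|w\|_X=\tau_X(\bar w)\height(w)$, and sum over heights $h\ge H_\beta(\rho)$ with a polynomial count (in fact there are only $O(h)$ classes of height $h$). Both inputs precede the statement, so there is no circularity.

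What your version buys is locality and an exact identity. Only rational directions actually inside the $\rho$-window are involved, and the local turn becomes a sum of local corner atoms. This pairs naturally with the single-atom lower bound used in Proposition~\ref{prop:exact-local-turn-exponent}. What the paper's version buys is brevity: one application of the one-gap estimate, no structural fact about $\Turn$, and no counting.

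Three small repairs are needed. Your opening paragraph announces a gap decomposition that you never carry out. The hypothesis $H\ge H_X$ is unused in your argument. The case $\epsilon\ge\tau_X(\beta)$ should be disposed of separately (it is trivial from $\Turn\le 2\pi$), because your final geometric series converges only when $\tau_X(\beta)-\epsilon>0$.
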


\begin{proof}
Let $H=H_\beta(\rho)-1$. By definition of $H_\beta(\rho)$ in \eqref{eq:fristEntry}, the set
$$
\{\theta:0<d_{\mathbb P^1}(\theta,\beta)<\rho\}
$$
is contained in
the height-$H$ Farey gap containing $\beta$. Let $u_H,v_H$ be the endpoint
representatives of this gap, chosen as in Lemma~\ref{lem:farey-gap}. As $\rho\to0$, we have $H=H_\beta(\rho)-1\to\infty$. The height-$H$ Farey
gap containing $\beta$ shrinks to $\beta$. Hence the projective directions of
$u_H$, $v_H$, and $u_H+v_H$ tend to $\beta$. Since $\tau_X$ is continuous, for
every $\epsilon>0$ there is $\rho_\epsilon>0$ such that, for all
$0<\rho<\rho_\epsilon$,
$$
\tau_X(\overline{u_H+v_H})
\geq
\tau_X(\beta)-\epsilon.
$$
Equivalently,
$$
\|u_H+v_H\|_X
\geq
(\tau_X(\beta)-\epsilon)\|u_H+v_H\|_\infty.
$$
Using Proposition~\ref{prop:gap-turn} and the triangle inequality for
$\|\cdot\|_X$, we obtain
$$
\Turn(\Gamma_\beta(\rho))
\leq
\Turn(J_H)
\leq
C_XH e^{-\|u_H\|_X-\|v_H\|_X}
\leq
C_XH e^{-\|u_H+v_H\|_X}.
$$
Since
$$
\|u_H+v_H\|_\infty=\height(u_H+v_H)\geq H_\beta(\rho),
$$
and $H\leq H_\beta(\rho)$, the preceding estimate gives
$$
\Turn(\Gamma_\beta(\rho))
\leq
C_XH_\beta(\rho)
e^{-(\tau_X(\beta)-\epsilon/2)H_\beta(\rho)}.
$$
After decreasing $\rho_\epsilon$, we may assume
$$
H_\beta(\rho)\leq e^{\epsilon H_\beta(\rho)/2}.
$$
Thus
$$
\Turn(\Gamma_\beta(\rho))
\leq
C_{X,\beta,\epsilon}
e^{-(\tau_X(\beta)-\epsilon)H_\beta(\rho)}.
$$
\end{proof}
The following proposition computes the lower logarithmic decay rate of the local turn
near an irrational direction in terms of its exponential rational approximation
rate.

\begin{proposition}
\label{prop:exact-local-turn-exponent}
Let $\beta$ be an irrational direction. Then
$$
\liminf_{\rho\to0}
\frac{\log \Turn(\Gamma_\beta(\rho))}{\log\rho}
=
\begin{cases}
\infty, & \Omega(\beta)=0,\\
\dfrac{\tau_X(\beta)}{\Omega(\beta)}, &
0<\Omega(\beta)<\infty,\\
0, & \Omega(\beta)=\infty.
\end{cases}
$$
\end{proposition}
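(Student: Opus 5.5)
We prove matching upper and lower bounds on the liminf. Write $T(\rho)=\Turn(\Gamma_\beta(\rho))$. Since $\log\rho<0$ for small $\rho$, the quotient $\log T(\rho)/\log\rho$ is large exactly when $T(\rho)$ is small compared with powers of $\rho$. The link between the scale $\rho$ and heights is the first-entry height $H_\beta(\rho)$ from \eqref{eq:fristEntry}, together with the rational corner atoms of Proposition~\ref{prop:rational-corner-atoms}.

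\textbf{Lower bound on the liminf.}

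1. By Proposition~\ref{prop:localized-turn-irrational}, for every $\epsilon>0$ we have
$$\frac{\log T(\rho)}{\log\rho}\ \ge\ \frac{(\tau_X(\beta)-\epsilon)\,H_\beta(\rho)-O(1)}{\log(1/\rho)}.$$
So it suffices to show
$$\liminf_{\rho\to0}\frac{H_\beta(\rho)}{\log(1/\rho)}\ \ge\ \frac{1}{\Omega(\beta)},$$
with the convention $1/0=\infty$.

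2. Let $\bar w_\rho$ realize $H_\beta(\rho)$, so that $d(\bar w_\rho,\beta)<\rho$. For any $\Omega'>\Omega(\beta)$ and all small $\rho$, the definition \eqref{eq:Omega} gives
$$\log\frac1\rho\ \le\ \log\frac{1}{d(\bar w_\rho,\beta)}\ \le\ \Omega'\,\height(\bar w_\rho).$$
This is the required inequality, and it also covers the case $\Omega(\beta)=0$. When $\Omega(\beta)=\infty$ the claimed value is $0$, which is the trivial lower bound, since $T\le 2\pi$ and $\log\rho<0$ make the quotient eventually $\ge 0$.

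\textbf{Upper bound on the liminf.}

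3. Assume $\Omega(\beta)>0$. Choose rational directions $\bar w_k\to\beta$ with
$$\frac{\log\bigl(1/d(\bar w_k,\beta)\bigr)}{\height(w_k)}\to\Omega(\beta).$$
Set $\rho_k=2\,d(\bar w_k,\beta)$. Then $\Gamma_\beta(\rho_k)$ contains the corner point $w_k/\|w_k\|_X$, so by Proposition~\ref{prop:rational-corner-atoms}
$$T(\rho_k)\ \ge\ \alpha_X(w_k)\ \ge\ c_X\,e^{-\|w_k\|_X}.$$

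4. Since $\bar w_k\to\beta$ and $\tau_X$ is continuous,
$$\|w_k\|_X=\tau_X(\bar w_k)\,\height(w_k)=(\tau_X(\beta)+o(1))\,\height(w_k).$$
Therefore
$$\frac{\log T(\rho_k)}{\log\rho_k}\ \le\ \frac{(\tau_X(\beta)+o(1))\,\height(w_k)+O(1)}{\log(1/\rho_k)}\ \longrightarrow\ \frac{\tau_X(\beta)}{\Omega(\beta)}.$$
The case $\Omega(\beta)=\infty$ gives limit $0$ in the same way. When $\Omega(\beta)=0$ the upper bound is vacuous.

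\textbf{Expected obstacle.}

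The main care point is step 2. We must make sure the first-entry rational direction genuinely lies within distance $\rho$ of $\beta$, so that the limsup in \eqref{eq:Omega} applies along $\bar w_\rho$. We must also check that $\bar w_\rho\to\beta$ as $\rho\to0$, which holds because $H_\beta(\rho)\to\infty$.

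Finally, we need $\Omega(\beta)\ge0$. This holds automatically for irrational $\beta$, since Dirichlet-type approximation already gives $d<1$ along infinitely many heights. So the case distinctions above are exhaustive.
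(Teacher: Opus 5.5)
Your proposal is correct and is essentially the paper's proof: the lower bound combines Proposition~\ref{prop:localized-turn-irrational} with $H_\beta(\rho)\ge \log(1/\rho)/\Omega'$, which follows from the definition of $\Omega(\beta)$. The upper bound tests the liminf along $\rho_k=2d_{\mathbb P^1}(\bar w_k,\beta)$ for an extremal sequence $\bar w_k\to\beta$, using the corner-atom lower bound of Proposition~\ref{prop:rational-corner-atoms}.

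The only nit is the case $\Omega(\beta)=\infty$. There, $T\le 2\pi$ gives only $\log T(\rho)/\log\rho\ge \log(2\pi)/\log\rho\to 0$, not eventual nonnegativity; eventual nonnegativity instead follows from $T(\rho)\to 0$, as the paper argues. Either way the liminf is at least $0$.
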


\begin{proof}

For $\rho >0$, density of rational directions and Proposition~\ref{prop:rational-corner-atoms} imply that $\Gamma_\beta(\rho)$ contains a positive corner atom, hence $\Turn(\Gamma_\beta(\rho))>0$ and $\log\Turn(\Gamma_\beta(\rho))$ is well-defined.

We first prove the lower bound. Assume $\Omega(\beta)<\infty$. Fix
$Q>\Omega(\beta)$ and $0<\delta<\tau_X(\beta)$. By the definition of
$\Omega(\beta)$ in \eqref{eq:Omega}, for all sufficiently small $\rho$,
$$
H_\beta(\rho)\geq \frac{1}{Q}\log\frac1\rho.
$$
Proposition~\ref{prop:localized-turn-irrational} gives
$$
\Turn(\Gamma_\beta(\rho))
\leq
C_{X,\beta,\delta}
e^{-(\tau_X(\beta)-\delta)H_\beta(\rho)}.
$$
Taking logarithms and dividing by $\log\rho<0$, we obtain
$$
\liminf_{\rho\to0}
\frac{\log\Turn(\Gamma_\beta(\rho))}{\log\rho}
\geq
\frac{\tau_X(\beta)-\delta}{Q}.
$$
If $\Omega(\beta)=0$, letting $Q\downarrow0$ gives an infinite liminf. If
$0<\Omega(\beta)<\infty$, letting $Q\downarrow\Omega(\beta)$ and then
$\delta\downarrow0$ gives
$$
\liminf_{\rho\to0}
\frac{\log\Turn(\Gamma_\beta(\rho))}{\log\rho}
\geq
\frac{\tau_X(\beta)}{\Omega(\beta)}.
$$
If $\Omega(\beta)=\infty$, the convergence
$\Turn(\Gamma_\beta(\rho))\to0$ as $\rho \to 0$ implies that the quotient is eventually
nonnegative, so the lower bound is $0$.

It remains to prove the reverse inequality when $0<\Omega(\beta)\leq\infty$.
Choose rational directions $\bar w_k\to\beta$ such that
$$
A_k:=
\frac{\log(1/d_{\mathbb P^1}(\bar w_k,\beta))}
{\height(\bar w_k)}
\xrightarrow{k\to\infty}
\Omega(\beta).
$$
Set
$
\rho_k=2d_{\mathbb P^1}(\bar w_k,\beta)
$, then $\bar w_k \in \Gamma_\beta(\rho_k)$, so
$$
\Turn(\Gamma_\beta(\rho_k))\geq \alpha_X(w_k),
$$
where $w_k$ is a primitive representative of $\bar w_k$.
Since $\bar w_k\to\beta$ and $\beta$ is irrational, we have $\height(\bar w_k)\to\infty$.
By Proposition~\ref{prop:rational-corner-atoms},
$$
\alpha_X(w_k)
\geq c_X \height(w_k) e^{-\|w_k\|_X} = 
c_X \height( w_k) e^{-\tau_X( w_k) \height( w_k)}.
$$
Fix $\delta>0$. Since $\tau_X$ is continuous, for all large $k$,
$$
\tau_X( w_k)\leq \tau_X(\beta)+\frac{\delta}{2}.
$$
Also, since $\height( w_k)\to\infty$, for all large $k$,
$$
c_X\height( w_k)\geq e^{-\delta \height(w_k)/2}.
$$Hence
$$
\alpha_X(w_k)
\geq
e^{-(\tau_X(\beta)+\delta)\height(\bar w_k)}.
$$
Since
$
\log\rho_k
<0$ for all large $k$, we have
$$
\frac{\log\Turn(\Gamma_\beta(\rho_k))}{\log\rho_k}
\leq
\frac{\log\alpha_X(w_k)}{\log\rho_k}
\leq
\frac{(\tau_X(\beta)+\delta)\height(\bar w_k)}
{A_k\height(\bar w_k)-\log 2}.
$$
Hence
$$
\liminf_{\rho\to0}
\frac{\log\Turn(\Gamma_\beta(\rho))}{\log\rho}
\leq
\liminf_{k\to\infty}
\frac{\tau_X(\beta)+\delta}
{A_k-\dfrac{\log 2}{\height(\bar w_k)}}.
$$
If $0<\Omega(\beta)<\infty$, letting $\delta\downarrow0$ gives the reverse
inequality. If $\Omega(\beta)=\infty$, the right-hand side is $0$.
\end{proof}

The next lemma records the elementary local estimates used in the flatness
argument: the graph parameter is comparable to projective distance, small
normal turn keeps the graph close to its tangent line, and a nearby corner
forces a definite height above the tangent line.

\begin{lemma}[Local graph-scale and height estimates]
\label{lem:convex-graph-height-estimates}
Let $\beta$ be an irrational direction and $p\in \beta \cap \partial B_X$. Choose graph coordinates at $p$ and write $f$ as in
Definition~\ref{def:graph-flatness}. For $|x|$ sufficiently small, let $\gamma(x)\in\partial B_X$ be the point
whose graph coordinates are $(x,f(x))$. Then the following hold.

\begin{itemize}
    \item 
 There exist constants $c_\beta,C_\beta,\eta>0$ such that, whenever
$|x|<\eta$,
$$
c_\beta |x|
\leq
d_{\mathbb P^1}(\pi(\gamma(x)),\beta)
\leq
C_\beta |x|.
$$
\item If $A_x$ is the graph arc from $\gamma(0)$ to $\gamma(x)$, then
$$
0\leq \frac{f(x)}{|x|}
\leq
2\Turn(A_x)
$$
for all sufficiently small $x\neq0$.

\item If $f$ has a corner at $x_0\neq0$ sufficiently close to $0$, with
exterior angle $\alpha$, and if
$
t=\frac{3x_0}{2}
$ (see Figure \ref{fig:corner-forces-height}),
then
$$
f(t)\geq \frac{1}{2}\alpha |x_0|.
$$
\end{itemize}

\end{lemma}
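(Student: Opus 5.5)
The three parts are elementary facts about a planar convex graph, and I will treat them one at a time. Throughout I use graph coordinates at $p$: the supporting line is $y=0$ (unique by Proposition~\ref{prop:irrational-differentiability}) and $B_X$ lies locally in $y\ge0$. Near $x=0$ the function $f$ is convex, nonnegative, and satisfies $f(0)=0$. Since this supporting line is unique, $f$ is differentiable at $0$ with $f'(0)=0$, and its one-sided derivatives $f'_\pm$ are monotone and continuous at $0$.

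\textbf{Part 1 (graph scale versus projective distance).} Consider the map $x\mapsto\pi(\gamma(x))$. The direction of the supporting line is transverse to $p$: the line $y=0$ cannot contain the origin, because $0$ is interior to $B_X$. Hence the radial projection restricted to that line is a local diffeomorphism near $p$, with derivative bounded above and below in terms of $|p|$ and the angle between $p$ and the tangent line. The point $\gamma(x)=p+(x,f(x))$ differs from the point $p+(x,0)$ on the line by $f(x)=o(|x|)$. So for small $|x|$ the projective distance is comparable to $|x|$, and we may take $c_\beta,C_\beta$ to be, say, half and twice the derivative of the projection at $p$.

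\textbf{Part 2 (small turn keeps the graph near the tangent line).} Take $x>0$; the case $x<0$ is symmetric. By convexity and $f'(0)=0$, we have $0\le f(x)\le x\,f'_-(x)$, using the left derivative at $x$. Now $f'_-(x)=\tan\phi$, where $\phi$ is the angle swept by the outward normal from $\gamma(0)$ to just before $\gamma(x)$, and $\phi\le\Turn(A_x)$. For small $x$ we have $\phi\le 1$, and then $\tan\phi\le2\phi$, which gives $f(x)/|x|\le2\Turn(A_x)$.

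\textbf{Part 3 (a nearby corner forces height).} Take $x_0>0$, again by symmetry. The exterior angle at $x_0$ means $f'_+(x_0)-f'_-(x_0)=\tan(\psi+\alpha)-\tan\psi\ge\alpha$, where $\psi=\arctan f'_-(x_0)\ge0$ and we use that $\tan$ has derivative at least $1$. Convexity gives $f(t)\ge f(x_0)+f'_+(x_0)(t-x_0)$. With $f(x_0)\ge0$, $f'_+(x_0)\ge f'_-(x_0)+\alpha\ge\alpha$ (the last step because $f'_-(x_0)\ge f'(0)=0$), and $t-x_0=x_0/2$, this yields $f(t)\ge\alpha x_0/2$.

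The only delicate point is the bookkeeping between normal angle and slope. Both the tangent-line direction and the required ranges for $x$ and $x_0$ are shrunk so that every angle involved is at most $1$ and every point lies inside the graph chart. None of the steps is a real obstacle.
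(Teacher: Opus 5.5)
Your proposal is correct and follows the paper's proof essentially step by step. In Part 1 the paper writes the linear functional $\det(p,\cdot)$ in graph coordinates as $ax+by$ with $a\neq0$ because the origin is interior; this is the same transversality fact as your nonvanishing derivative of the radial projection along the tangent line, combined with $f(x)=o(|x|)$. Parts 2 and 3 use the same convexity bounds, namely $f(x)\le x f'_-(x)=x\tan\phi_x$ with $\phi_x\le\Turn(A_x)$, and $f'_+(x_0)\ge\alpha$ followed by the supporting-line inequality at $x_0$.
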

\begin{figure}[!htbp]
\centering
\begin{tikzpicture}[scale=1.7,>=Latex,font=\small]


\coordinate (O) at (0,0);
\coordinate (C) at (1.4,0.120);
\coordinate (T) at (2.1,0.520587);

\draw[->,gray!70] (-0.25,0) -- (2.85,0) node[right] {$x$};
\draw[->,gray!70] (0,-0.20) -- (0,0.95) node[above] {$y$};

\draw[blue!70!black,very thick,smooth]
plot coordinates {
(0,0)
(0.35,0.0075)
(0.70,0.0300)
(1.05,0.0675)
(1.40,0.1200)
};
\draw[blue!70!black,very thick]
(1.40,0.1200) -- (2.55,0.7781);

\fill[black] (C) circle (1.3pt);
\fill[black] (T) circle (1.3pt);

\draw[gray!65,thin] (1.05,0.0600) -- (1.75,0.180);

\draw[orange!85!black,thick]
($(C)+(9.73:0.30)$)
arc[start angle=9.73,end angle=29.78,radius=0.30];
\node[orange!85!black] at (1.78,0.25) {$\alpha$};

\draw[gray!65,dashed] (1.4,0) -- (C);
\draw[gray!65,dashed] (2.1,0) -- (T);

\node[below] at (O) {$0$};
\node[below] at (1.4,0) {$x_0$};
\node[below] at (2.1,0) {$t$};

\draw[<->,black!70] (2.28,0) -- (2.28,0.520587);
\node[right] at (2.30,0.27) {$f(t)\geq \frac{1}{2}\alpha |x_0|$};

\end{tikzpicture}
\caption{A corner of exterior angle $\alpha$ creates a one-sided slope jump.}
\label{fig:corner-forces-height}
\end{figure}

\begin{proof}
We first compare the graph scale with the projective scale. Since the
supporting line at $p$ is unique and horizontal in the graph coordinates,
$$
f(x)=o(|x|).
$$
Write $(x,y)$ for the local graph coordinates centered at $p$. Then the function
$q\mapsto \det(p,q)$, where determinant is taken in the original vector space, has the form
$$
\det(p,q)=ax+by
$$
for some constants $a,b$.
We have $a\neq0$. Indeed, if $a=0$, then $\det(p,q)=0$ for every point $q$ on
the horizontal line $y=0$, which is the supporting line at $p$. Thus this
supporting line would be the line through $0$ and $p$, contradicting that the
origin lies in the interior of $B_X$. Therefore
$$
\det(p,\gamma(x))=ax+bf(x)=ax+o(|x|),
\qquad a\neq0.
$$
It follows that, for all sufficiently small $|x|$,
$$
c|x|\leq |\det(p,\gamma(x))|\leq C|x|.
$$
Since $\gamma(x)\to p\neq0$, and since projective angular distance is locally
comparable to
$$
\frac{|\det(p,\gamma(x))|}{|p|\,|\gamma(x)|},
$$
there exist constants $c_\beta,C_\beta,\eta>0$ such that, whenever $|x|<\eta$,
$$
c_\beta |x|
\leq
d_{\mathbb P^1}(\pi(\gamma(x)),\beta)
\leq
C_\beta |x|.
$$

(2) We prove the graph-height estimate. For a finite convex function, one-sided
derivatives exist and are monotone. Since the tangent slope at $0$ is $0$,
the one-sided tangent angles tend to $0$ near the origin. Thus, for all
sufficiently small $|x|$, all relevant tangent angles have absolute value at
most $\pi/4$.

Assume first that $x>0$. Let $\phi_x$ be the left tangent angle at
$\gamma(x)$. Convexity gives
$$
0\leq \frac{f(x)}{x}\leq f'_-(x)=\tan\phi_x.
$$
The tangent angle at the origin is $0$, and tangent-angle variation equals
normal-angle variation. Hence
$$
0\leq \phi_x\leq \Turn(A_x).
$$
Since $\tan\phi_x\leq2\phi_x$ for $0\leq\phi_x\leq\pi/4$, we get
$$
0\leq \frac{f(x)}{x}\leq2\Turn(A_x).
$$
The case $x<0$ follows by replacing $f(s)$ with $f(-s)$.

(3) Finally, we prove the corner estimate. Replacing $f(s)$ by $f(-s)$ if
necessary, assume $x_0>0$. Convexity gives
$$
f'_-(x_0)\leq f'_+(x_0).
$$
The exterior angle is
$$
\alpha=\arctan f'_+(x_0)-\arctan f'_-(x_0).
$$
Since $\arctan$ is increasing and $1$-Lipschitz,
$$
\alpha\leq f'_+(x_0)-f'_-(x_0).
$$
Because $x_0>0$ and the one-sided derivatives are monotone,
$$
f'_-(x_0)\geq0.
$$
Thus
$$
f'_+(x_0)\geq\alpha.
$$
Also $f(x_0)\geq0$, because the supporting line at the origin is $y=0$ and
the graph lies locally in $y\geq0$. Therefore convexity gives
$$
f(3x_0/2)
\geq
f(x_0)+\frac{x_0}{2}f'_+(x_0)
\geq
\frac{\alpha x_0}{2}.
$$
\end{proof}

\begin{theorem}[Exact irrational flatness order]
\label{thm:exact-irrational-flatness-order}
Let $\beta$ be an irrational direction and $p \in\beta\cap\partial B_X$. Then
$$
\mathfrak f_X(\beta)
= 1+\liminf_{\rho\to0}
\frac{\log \Turn(\Gamma_\beta(\rho))}{\log\rho} =
\begin{cases}
\infty, & \Omega(\beta)=0,\\
1+\dfrac{\tau_X(\beta)}{\Omega(\beta)}, &
0<\Omega(\beta)<\infty,\\
1, & \Omega(\beta)=\infty.
\end{cases}
$$
In particular, $\partial B_X$ is flat to infinite order at $p$ if and
only if $\Omega(\beta)=0$.
\end{theorem}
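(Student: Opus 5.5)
The second equality is Proposition~\ref{prop:exact-local-turn-exponent}. So the work is the first equality,
$$
\mathfrak f_X(\beta)=1+\ell,\qquad \ell:=\liminf_{\rho\to0}\frac{\log\Turn(\Gamma_\beta(\rho))}{\log\rho}.
$$
I will prove it by two inequalities, working in the graph coordinates of Definition~\ref{def:graph-flatness}. Lemma~\ref{lem:convex-graph-height-estimates} translates between the graph parameter $x$ and the projective radius $\rho$. By central symmetry it suffices to treat one point $p$, and only the arc of $\Gamma_\beta(\rho)$ through $p$ matters.

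\textbf{Lower bound $\mathfrak f_X(\beta)\ge 1+\ell$.} Fix $N<1+\ell$, and pick $\ell'$ with $N-1<\ell'<\ell$. By the definition of $\ell$ there is $\rho_0$ with
$$
\Turn(\Gamma_\beta(\rho))\le\rho^{\ell'}\quad\text{for } \rho<\rho_0 .
$$
For small $x$, part (1) of Lemma~\ref{lem:convex-graph-height-estimates} gives $A_x\subset\Gamma_\beta(2C_\beta|x|)$. Then part (2) yields
$$
f(x)\le 2|x|\,(2C_\beta|x|)^{\ell'}=O\bigl(|x|^{1+\ell'}\bigr)\le |x|^N
$$
for small $|x|$. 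Hence the boundary is flat to order at least $N$, and letting $N\uparrow 1+\ell$ gives the bound. The case $\ell=\infty$ works the same way and gives infinite flatness.

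\textbf{Upper bound $\mathfrak f_X(\beta)\le 1+\ell$.} This only matters when $\ell<\infty$, i.e.\ $\Omega(\beta)>0$. Here I would not work with the liminf abstractly. Instead I reuse the witnessing sequence from the proof of Proposition~\ref{prop:exact-local-turn-exponent}: rational directions $\bar w_k\to\beta$ with
$$
d_k:=d_{\mathbb P^1}(\bar w_k,\beta)=e^{-A_k\height(w_k)},\quad A_k\to\Omega(\beta),\qquad \alpha_X(w_k)\ge e^{-(\tau_X(\beta)+\delta)\height(w_k)} .
$$
Let $x_k$ be the graph parameter of the corner over $\bar w_k$. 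By part (1), $|x_k|\asymp d_k$. By part (3), at $t_k=3x_k/2$ we get
$$
f(t_k)\ge\tfrac12\alpha_X(w_k)|x_k|\ge c\,|t_k|^{1+(\tau_X(\beta)+\delta)/A_k+o(1)} .
$$
(Here one may need to argue that the exterior angle of the graph at $x_k$ equals the normal-turn atom $\alpha_X(w_k)$; this holds because graph coordinates are a Euclidean isometry.) So for every $N>1+(\tau_X(\beta)+2\delta)/\Omega(\beta)$ we have $f(t_k)>|t_k|^N$ along $t_k\to0$, and flatness of order $N$ fails. Letting $\delta\to0$ gives $\mathfrak f_X(\beta)\le 1+\tau_X(\beta)/\Omega(\beta)=1+\ell$. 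When $\Omega(\beta)=\infty$ this gives $\mathfrak f_X(\beta)\le 1$. Together with the trivial bound $\mathfrak f_X\ge1$, which follows from $f=o(|x|)$, this settles that case.

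\textbf{Main obstacle.} The delicate step is the upper bound. There I must check three things: that the chosen corners are close enough to $0$ for the lemma to apply, that the constants $c$ and $\tfrac12$ are harmless in the exponents, and that $\log|t_k|\sim -A_k\height(w_k)$, so the $o(1)$ errors vanish. Routing this step through the explicit witnessing sequence, rather than through $\ell$ itself, avoids any need to compare turn with a single corner in the reverse direction. The final claim, that $\partial B_X$ is flat to infinite order at $p$ if and only if $\Omega(\beta)=0$, is then read off from the case formula.
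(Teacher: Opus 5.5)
Your proposal is correct and follows essentially the same route as the paper: the lower bound comes from the liminf via parts (1) and (2) of Lemma~\ref{lem:convex-graph-height-estimates}, and the upper bound comes from an explicit sequence $\bar w_k$ realizing $\Omega(\beta)$, combined with the corner lower bound of Proposition~\ref{prop:rational-corner-atoms} and part (3) of that lemma at $t_k=3x_k/2$. The differences are only cosmetic: you phrase the lower bound uniformly in $\ell$ rather than splitting by the value of $\Omega(\beta)$, and you use the two-sided comparison $|x_k|\asymp d_k$ where the paper needs only the upper bound on $|x_k|$.
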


\begin{proof}
Choose graph coordinates at $p$ and write $f$ as in
Definition~\ref{def:graph-flatness}. For sufficiently small $|x|$, let
$\gamma(x)\in\partial B_X$ be the point whose graph coordinates are
$(x,f(x))$. Let $A_x$ be the boundary arc from $\gamma(0)=p$ to $\gamma(x)$. By the upper bound in the first part of Lemma~\ref{lem:convex-graph-height-estimates}, there is a constant
$C_\beta>0$ such that, for all sufficiently small $|x|$,
$$
A_x\subset \Gamma_\beta(C_\beta |x|),
$$
where $\Gamma_\beta(\cdot)$ is defined in \eqref{eq:Gamma}. Therefore the second part of Lemma~\ref{lem:convex-graph-height-estimates} gives
\begin{equation}\label{eq:TurnInequality}
    0\leq \frac{f(x)}{|x|}
\leq
2\Turn(A_x)
\leq
\Turn(\Gamma_\beta(C_\beta |x|)).
\end{equation}
Set
\[
L_\beta=
\liminf_{\rho\to0}
\frac{\log \Turn(\Gamma_\beta(\rho))}{\log\rho}.
\]
If $L_\beta>0$, then for every $0<s<L_\beta$, there exists $0<\rho_s<1$ such
that, for all $0<\rho<\rho_s$,
\begin{equation}\label{eq:AnotherTurnInequality}
\Turn(\Gamma_\beta(\rho))\leq \rho^s.
\end{equation}

If $\Omega(\beta)=0$, then Proposition~\ref{prop:exact-local-turn-exponent}
gives $L_\beta=\infty$. Let $N>0$ be arbitrary. Choose
$$
s>\max\{0,N-1\}.
$$
Then from \eqref{eq:TurnInequality} and \eqref{eq:AnotherTurnInequality}, for all sufficiently small $|x|$,
$$
0\leq f(x)
\leq
|x|\Turn(\Gamma_\beta(C_\beta |x|))
\leq
C_\beta^s |x|^{1+s}.
$$
Since $1+s>N$, we have
$$
C_\beta^s |x|^{1+s}\leq |x|^N
$$
for all sufficiently small $|x|$. Hence $\partial B_X$ is flat to order at
least $N$ at $p$. Since $N$ was arbitrary,
$$
\mathfrak f_X(\beta)=\infty.
$$

If $0<\Omega(\beta)<\infty$, then Proposition~\ref{prop:exact-local-turn-exponent}
gives
$
L_\beta=\frac{\tau_X(\beta)}{\Omega(\beta)}
>0$. Let
$
1<N<1+L_\beta.
$
Choose $s$ with
$
N-1<s<L_\beta.
$
Then, for all sufficiently small $|x|$,
$$
0\leq f(x)
\leq
|x|\Turn(\Gamma_\beta(C_\beta |x|))
\leq
C_\beta^s |x|^{1+s} \leq |x|^N.
$$
Hence $\partial B_X$ is flat to order at
least $N$ at $p$. Therefore
$$
\mathfrak f_X(\beta)\geq
1+\frac{\tau_X(\beta)}{\Omega(\beta)}.
$$
If $\Omega(\beta)=\infty$, then $L_\beta=0$. The preceding power estimate gives
no positive exponent. Instead, the unique supporting line at $p$ gives
$
f(x)/|x|\to0.
$
Hence
$
f(x)\leq |x|
$
for all sufficiently small $|x|$. Thus $\partial B_X$ is flat to order at
least $1$ at $p$, and so
$$
\mathfrak f_X(\beta)\geq1.
$$
It remains to prove the matching upper bounds. Assume first that
$0<\Omega(\beta)<\infty$, and let
\begin{equation}\label{eq:NTO}
    N>1+\frac{\tau_X(\beta)}{\Omega(\beta)}.
\end{equation}
Choose rational directions $\bar w_k\to\beta$ such that
$$
A_k:=
\frac{\log(1/d_{\mathbb P^1}(\bar w_k,\beta))}
{\height(\bar w_k)}
\xrightarrow{k\to\infty}
\Omega(\beta).
$$
 By the lower bound in the first part of
Lemma~\ref{lem:convex-graph-height-estimates} and the definition of $A_k$, there is $D_\beta>0$ such that, for all large $k$,
\begin{equation}\label{eq:xk}
    |x_k|
\leq
D_\beta d_{\mathbb P^1}(\bar w_k,\beta)
=
D_\beta e^{-A_k\height(\bar w_k)}.
\end{equation}
For each $k$, choose the sign of a primitive representative $w_k$ of
$\bar w_k$ so that
$$
q_k:=\frac{w_k}{\|w_k\|_X} \xrightarrow{k\to\infty}
p,
$$
and let $x_k$ be the horizontal coordinate of $q_k$ in the
graph chart.
Since
$$
\|w_k\|_X=\tau_X(\bar w_k)\height(\bar w_k),
$$
Proposition~\ref{prop:rational-corner-atoms} gives
$$
\alpha_X(w_k)
\geq
c_X\height(\bar w_k)
e^{-\tau_X(\bar w_k)\height(\bar w_k)}.
$$
By continuity of $\tau_X$ and because $\height(\bar w_k)\to\infty$, for every
$\epsilon>0$, there exists $k_\epsilon$ such that 
$$
\alpha_X(w_k)
\geq
e^{-(\tau_X(\beta)+\epsilon)\height(\bar w_k)},
$$
for all $k>k_\epsilon$. By the third part of Lemma~\ref{lem:convex-graph-height-estimates}, applied at the corner
$q_k$, if
$
t_k=\frac{3x_k}{2},
$
then
$$
f(t_k)\geq \frac12\alpha_X(w_k)|x_k|.
$$
Thus
$$
\frac{f(t_k)}{|t_k|^N}
\geq
c_N\alpha_X(w_k)|x_k|^{1-N}.
$$
Because $1-N<0$ and \eqref{eq:xk},
we get
$$
|x_k|^{1-N}
\geq
D_\beta^{1-N}e^{(N-1)A_k\height(\bar w_k)}.
$$
Therefore, for all large $k$,
\begin{equation}\label{eq:ft}
    \frac{f(t_k)}{|t_k|^N}
\geq
c_{N,\beta}
\exp\left(
\left((N-1)A_k-\tau_X(\beta)-\epsilon\right)
\height(\bar w_k)
\right).
\end{equation}
Due to \eqref{eq:NTO}, one can choose $\epsilon>0$ small enough such that
$$
(N-1)(\Omega(\beta)-\epsilon)>\tau_X(\beta)+\epsilon.
$$
Since $A_k>\Omega(\beta)-\epsilon$ for all large $k$, the right-hand side of \eqref{eq:ft}
tends to infinity. Hence, for all large $k$,
$$
f(t_k)>|t_k|^N.
$$
Since $t_k\to0$, this shows that $\partial B_X$ is not flat to order at least
$N$ at $p$. Since this holds for every
$
N>1+\frac{\tau_X(\beta)}{\Omega(\beta)},
$
we get
$$
\mathfrak f_X(\beta)\leq
1+\frac{\tau_X(\beta)}{\Omega(\beta)}.
$$
Finally, assume $\Omega(\beta)=\infty$. Fix any $N>1$, and choose rational
directions $\bar w_k\to\beta$ with $A_k\to\infty$. The same argument, with
$\epsilon=1$, gives
$
\frac{f(t_k)}{|t_k|^N}\to\infty.
$
Hence, for all large $k$,
$
f(t_k)>|t_k|^N.
$
Since $t_k\to0$, the boundary is not flat to order at least $N$ at $p$. This
holds for every $N>1$. Therefore
$
\mathfrak f_X(\beta)=1.
$
\end{proof}

\subsection{Realizing prescribed flatness orders}

The exact flatness formula reduces the realization problem to prescribing the exponential jumps in the continued-fraction denominators of a slope. We show that these jumps can be prescribed inside any interval of projective directions, and hence every flatness order in $[1,\infty]$ occurs in every projective interval. The size of the resulting flatness strata is studied in the next subsection.

\begin{lemma}[A continued-fraction formula]
\label{lem:continued-fractions-slope}
Let $\beta=[(1,\mathfrak b )]$ be an irrational direction, where
$\mathfrak b\in\mathbb R\setminus\mathbb Q$. Let $p_j/q_j$ be the continued-fraction
convergents of $\mathfrak b$, with $q_j>0$. Then
$$
\Omega(\beta)
=
\frac{1}{\max\{1,|\mathfrak b|\}}
\limsup_{j\to\infty}
\frac{\log q_{j+1}}{q_j}.
$$
Consequently, with the convention that a positive number divided by $0$ is
$\infty$ and divided by $\infty$ is $0$,
$$
\mathfrak f_X(\beta)
=
1+
\frac{\|(1,\mathfrak b)\|_X}
{\displaystyle\limsup_{j\to\infty}\frac{\log q_{j+1}}{q_j}}.
$$
\end{lemma}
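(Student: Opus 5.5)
The plan is to compare the projective distance and the height of a rational direction $\bar w=[(q,p)]$ with the slope-coordinate quantities $|\mathfrak b-p/q|$ and $q$, and then to use the classical best-approximation properties of continued fractions.

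\textbf{Step 1: local charts.} Near $\beta=[(1,\mathfrak b)]$ the slope chart $[(1,s)]\mapsto s$ is a smooth chart of $\mathbb P^1(\mathbb R)$. Hence $d_{\mathbb P^1}([(1,s)],\beta)$ is comparable to $|s-\mathfrak b|$ for $s$ near $\mathfrak b$, with constants depending only on $\beta$. Since $\Omega$ involves $\log(1/d)$ divided by a height tending to infinity, these bounded multiplicative constants do not affect $\Omega(\beta)$.

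\textbf{Step 2: heights.} Write $\bar w=[(q,p)]$ with $q>0$ and $p/q\to\mathfrak b$. Then $\height(\bar w)=\max\{q,|p|\}=q\max\{1,|p/q|\}$, and $\max\{1,|p/q|\}\to\max\{1,|\mathfrak b|\}$. It follows that
\[
\Omega(\beta)=\frac{1}{\max\{1,|\mathfrak b|\}}\,\limsup_{p/q\to\mathfrak b}\frac{\log(1/|\mathfrak b-p/q|)}{q}.
\]
Call the remaining limsup $\Lambda(\mathfrak b)$.

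\textbf{Step 3: continued fractions.} This is the main step, showing $\Lambda(\mathfrak b)=\limsup_j \log q_{j+1}/q_j$.

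For the lower bound, use convergents. The inequality $|\mathfrak b-p_j/q_j|<1/(q_jq_{j+1})$ gives
\[
\frac{\log(1/|\mathfrak b-p_j/q_j|)}{q_j}\ge\frac{\log q_{j+1}+\log q_j}{q_j}\ge \frac{\log q_{j+1}}{q_j}.
\]

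For the upper bound, take an arbitrary fraction $p/q$ near $\mathfrak b$. If $p/q$ is not a convergent, Legendre's theorem gives $|\mathfrak b-p/q|\ge 1/(2q^2)$. The corresponding ratio is then at most $(2\log q+\log 2)/q\to 0$, so these fractions contribute nothing to $\Lambda(\mathfrak b)$. Non-reduced fractions likewise have ratio at most that of their reduced form. If $p/q=p_j/q_j$ is a convergent, then $|\mathfrak b-p_j/q_j|\ge 1/(q_j(q_{j+1}+q_j))\ge 1/(2q_jq_{j+1})$. This gives the ratio bound
\[
\frac{\log q_{j+1}+\log q_j+\log 2}{q_j}=\frac{\log q_{j+1}}{q_j}+o(1).
\]
Combining the two bounds yields the equality. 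The degenerate cases $\Lambda=0$ and $\Lambda=\infty$ pass through the same inequalities unchanged.

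\textbf{Step 4: the flatness formula.} Substitute into Theorem~\ref{thm:exact-irrational-flatness-order}. Taking $v=(1,\mathfrak b)$ in the definition gives $\tau_X(\beta)=\|(1,\mathfrak b)\|_X/\max\{1,|\mathfrak b|\}$. Therefore
\[
\frac{\tau_X(\beta)}{\Omega(\beta)}=\frac{\|(1,\mathfrak b)\|_X}{\limsup_j \log q_{j+1}/q_j},
\]
because the factor $\max\{1,|\mathfrak b|\}$ cancels. The three cases of Theorem~\ref{thm:exact-irrational-flatness-order} match the stated conventions: $\Omega=0$ gives $1+\infty$, and $\Omega=\infty$ gives $1+0$.

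The main care point is Step 3's handling of non-convergent fractions. Each such fraction only needs to give a ratio tending to $0$, which Legendre's bound supplies.
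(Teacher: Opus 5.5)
Your proof is correct and follows the paper's argument. The same bi-Lipschitz slope-chart and height comparison reduce $\Omega(\beta)$ to $\frac{1}{\max\{1,|\mathfrak b|\}}\limsup_{p/q\to\mathfrak b} q^{-1}\log(1/|\mathfrak b-p/q|)$, and the same substitution $\tau_X(\beta)=\|(1,\mathfrak b)\|_X/\max\{1,|\mathfrak b|\}$ into Theorem~\ref{thm:exact-irrational-flatness-order} finishes it. The paper only cites Khinchin's convergent bounds and best-approximation theorems for the continued-fraction identity, whereas you carry that step out explicitly, using Legendre's criterion to show that non-convergents contribute only $o(1)$; this is a valid way to fill that step.
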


\begin{proof}
For a reduced rational number $p/q$, with $q>0$, the corresponding primitive
vector in the slope chart is
$
w_{p/q}=(q,p).
$
Hence
$$
\height(w_{p/q})
=
\max\{q,|p|\}
=
q\max\left\{1,\left|\frac{p}{q}\right|\right\}.
$$
For every $\epsilon>0$, if $p/q$ is sufficiently close to $\mathfrak b$, then
$$
(\max\{1,|\mathfrak b|\}-\epsilon)q
\leq
\height(w_{p/q})
\leq
(\max\{1,|\mathfrak b|\}+\epsilon)q.
$$
The slope chart $r\mapsto[(1,r)]$ is locally bi-Lipschitz for the projective
angular distance. Indeed, for $r$ close to $\mathfrak b$,
$$
d_{\mathbb P^1}([(1,\mathfrak b)],[(1,r)])
=
|\arctan \mathfrak b-\arctan r|.
$$
Hence there are constants $c_{\mathfrak b},C_{\mathfrak b}>0$ such that, for all rational $p/q$ sufficiently close to $\mathfrak b$, $$ c_{\mathfrak b}\left|\mathfrak b-\frac{p}{q}\right| \leq d_{\mathbb P^1}\left([(1,\mathfrak b)],\left[\left(1,\frac{p}{q}\right)\right]\right) \leq C_{\mathfrak b}\left|\mathfrak b-\frac{p}{q}\right|. $$
The additive constants $\log c_{\mathfrak b}$ and $\log C_{\mathfrak b}$ disappear after division by
$q$. Therefore
$$
\Omega(\beta)
=
\frac{1}{\max\{1,|\mathfrak b|\}}
\limsup_{\frac{p}{q}\to \mathfrak b}
\frac{\log(1/|\mathfrak b-\frac{p}{q}|)}{q},
$$
where the limsup is taken over reduced rationals $p/q$ with $q>0$. By the standard estimates \cite[Thm~9 and~13]{KhinchinContinuedFractions} and the best-approximation property of convergents
\cite[Thm 16 and 17]{KhinchinContinuedFractions},
one can deduce that
$$
\limsup_{\frac{p}{q}\to \mathfrak b}
\frac{\log(1/|\mathfrak b-\frac{p}{q}|)}{q}
=
\limsup_{j\to\infty}
\frac{\log q_{j+1}}{q_j}.
$$
Finally, the stated formula for $\mathfrak f_X(\beta)$ follows from
Theorem~\ref{thm:exact-irrational-flatness-order} and
$$
\tau_X(\beta)
=
\frac{\|(1,\mathfrak b)\|_X}{\max\{1,|\mathfrak b|\}}.
$$
\end{proof}

\begin{proposition}[Prescribed irrational flatness orders]
\label{prop:prescribed-flatness-orders}
For every nonempty projective interval $U\subset\mathbb P^1(\mathbb R)$ and
every $A\in[1,\infty]$, there exists an irrational direction $\beta\in U$ such
that
$$
\mathfrak f_X(\beta)=A.
$$
\end{proposition}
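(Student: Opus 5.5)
Using Lemma~\ref{lem:continued-fractions-slope}, it suffices to build an irrational slope $\mathfrak b$ whose direction $[(1,\mathfrak b)]$ lies in $U$ and whose continued-fraction denominators satisfy
$$
\Lambda(\mathfrak b):=\limsup_{j\to\infty}\frac{\log q_{j+1}}{q_j}=\Lambda^*,
$$
where $\Lambda^*$ is the target value determined by $A$. Since $\mathfrak f_X(\beta)=1+\|(1,\mathfrak b)\|_X/\Lambda(\mathfrak b)$, we need $\Lambda^*=0$ for $A=\infty$, $\Lambda^*=\infty$ for $A=1$, and $\Lambda^*=\|(1,\mathfrak b)\|_X/(A-1)$ for $1<A<\infty$. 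After applying a projective chart change if necessary (the chart $[(1,r)]$ or $[(r,1)]$, with the symmetric version of the lemma for the second), we may shrink $U$ to a small open interval of slopes $(r_0,r_1)$. We then fix a finite initial block of partial quotients $[a_0;a_1,\dots,a_m]$ whose cylinder set lies inside this interval. All later partial quotients will be chosen freely, so every constructed $\mathfrak b$ lies in $U$.

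\textbf{The cases $A=\infty$ and $A=1$.} For $A=\infty$, take all later partial quotients equal to $1$ (or bounded). Then $q_{j+1}\le C q_j$, so $\log q_{j+1}/q_j\to 0$ and $\Lambda=0$. For $A=1$, choose inductively $a_{j+1}\ge e^{j q_j}$. Since $q_{j+1}\ge a_{j+1}q_j$, we get $\log q_{j+1}/q_j\ge j$, hence $\Lambda=\infty$. Irrationality holds in both cases because the expansions are infinite.

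\textbf{The intermediate case $1<A<\infty$.} The difficulty is that the target $\Lambda^*=\|(1,\mathfrak b)\|_X/(A-1)$ depends on the unknown $\mathfrak b$. I will handle this with a self-consistent inductive construction. Set $g(r)=\|(1,r)\|_X/(A-1)$, which is continuous and positive. At stage $j$, let $r_j$ be the current rational convergent $p_j/q_j$. Along a sparse sequence of indices $j_k$, choose
$$
a_{j_k+1}=\left\lfloor \frac{e^{g(r_{j_k})q_{j_k}}}{q_{j_k}}\right\rfloor,
$$
so that $q_{j_k+1}=a_{j_k+1}q_{j_k}+q_{j_k-1}$ satisfies $\log q_{j_k+1}=g(r_{j_k})q_{j_k}+O(\log q_{j_k})$. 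At all other indices, set the partial quotient to $1$; these indices give ratios $\log q_{j+1}/q_j\to0$. Inserting many such indices between consecutive jumps also prevents a jump index from contributing anything beyond its intended value.

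To conclude, note that $|\mathfrak b-r_{j_k}|\le 1/q_{j_k}^2\to0$, so $g(r_{j_k})\to g(\mathfrak b)$ by continuity. Hence the limsup equals $g(\mathfrak b)$ exactly. Substituting into Lemma~\ref{lem:continued-fractions-slope} gives $\mathfrak f_X(\beta)=1+\|(1,\mathfrak b)\|_X/g(\mathfrak b)=A$.

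\textbf{Main obstacle.} The step needing the most care is this self-consistency: the target rate depends on the limit point $\mathfrak b$. It is resolved by using the convergent $r_{j_k}$ as a proxy for $\mathfrak b$ together with the continuity of $\|\cdot\|_X$. A smaller point to check is that the chart normalization factor $\max\{1,|\mathfrak b|\}$ in the lemma is already absorbed by working directly with the second formula of the lemma, which involves only $\|(1,\mathfrak b)\|_X$.
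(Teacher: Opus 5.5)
Your proposal is correct and follows essentially the paper's construction. You fix an initial block of partial quotients that forces the slope into $U$, take bounded partial quotients for $A=\infty$ and superexponential ones for $A=1$. For $1<A<\infty$ you choose partial quotients of size about $\exp(D_jq_j)/q_j$ with $D_j=\|(1,p_j/q_j)\|_X/(A-1)$, and continuity of the norm at the limit slope makes the limsup equal the self-consistent target.

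The only differences are cosmetic. You jump only along a sparse subsequence with partial quotients $1$ in between, whereas the paper jumps at every index. Your chart change is unnecessary, since any open projective interval already contains a subinterval of slopes in the chart $[(1,r)]$ avoiding $[(0,1)]$.
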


\begin{proof}
Choose an open slope interval $I\subset\mathbb R$ whose projectivization lies
in $U$. Fix a finite initial block $a_0,\ldots,a_k$ such that every irrational
number
$$
[a_0;a_1,\ldots,a_k,c_{k+1},c_{k+2},\ldots],
\qquad c_i\in\mathbb Z_{\geq1},
$$
lies in $I$. Assume first that $1<A<\infty$. Having chosen the partial quotients up to
$c_j$, let $p_j/q_j$ be the corresponding convergent and put
$$
D_j=\frac{\|(1,p_j/q_j)\|_X}{A-1}.
$$
Since $q_{j+1}=c_{j+1}q_j+q_{j-1}$ (\cite[Thm 1]{KhinchinContinuedFractions}), and since $D_j$ is bounded away from $0$, we may
choose $c_{j+1}$ comparable to $\exp(D_jq_j)/q_j$. Then
$q_{j+1}=\exp(D_jq_j)(1+o(1))$, so
$$
\frac{\log q_{j+1}}{q_j}=D_j+o(1).
$$ 
The resulting
irrational $\mathfrak b$ lies in $I$, hence $\beta=[(1,\mathfrak b)]\in U$.
Since $p_j/q_j\to\mathfrak b$, continuity of $\|\cdot\|_X$ gives
$$
\limsup_{j\to\infty}\frac{\log q_{j+1}}{q_j}
=
\frac{\|(1,\mathfrak b)\|_X}{A-1}.
$$
Lemma~\ref{lem:continued-fractions-slope} gives $\mathfrak f_X(\beta)=A$.

For $A=\infty$, choose all remaining partial quotients bounded. Then
$q_{j+1}=O(q_j)$, so $\log q_{j+1}/q_j\to0$, and
Lemma~\ref{lem:continued-fractions-slope} gives $\mathfrak f_X(\beta)=\infty$.

For $A=1$, choose infinitely many partial quotients so large that
$q_{j+1}\geq \exp(jq_j)$. Then
$\limsup_j \log q_{j+1}/q_j=\infty$, and
Lemma~\ref{lem:continued-fractions-slope} gives $\mathfrak f_X(\beta)=1$.
\end{proof}
\subsection{Measure, category, and dimension of irrational flatness}

The flatness strata have opposite metric and topological behavior: infinite
flatness is large for Lebesgue measure, while least flatness is large for
Baire category. Set
$$
\mathcal I=\mathbb P^1(\mathbb R)\setminus\mathbb P^1(\mathbb Q),
\qquad
\mathcal F_A=\mathcal F_A(X)=\{\beta\in\mathcal I:\mathfrak f_X(\beta)=A\}.
$$
All topological notions below are relative to $\mathcal I$.

\begin{proposition}
\label{prop:measure-category-dimension-flatness}
For every $A\in[1,\infty]$, the set $\mathcal F_A$ is dense in $\mathcal I$.
Moreover:
\begin{itemize}
\item $\mathcal F_1$ is a dense $G_\delta$ subset of $\mathcal I$ and has
Hausdorff dimension $0$;
\item $\mathcal F_\infty$ has full Lebesgue measure, its complement has
Hausdorff dimension $0$, and $\mathcal F_\infty$ is meagre;
\item for every $1<A<\infty$, the set $\mathcal F_A$ is meagre and has
Hausdorff dimension $0$.
\end{itemize}
\end{proposition}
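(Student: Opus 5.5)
Density of every $\mathcal F_A$ is exactly Proposition~\ref{prop:prescribed-flatness-orders}, so the work is in the size statements. I will reduce everything to the exponent $\Omega$ via Theorem~\ref{thm:exact-irrational-flatness-order}: $\mathcal F_\infty=\{\Omega=0\}$, $\mathcal F_1=\{\Omega=\infty\}$, and for $1<A<\infty$, $\mathcal F_A=\{\beta:\Omega(\beta)=\tau_X(\beta)/(A-1)\}\subset\{0<\Omega<\infty\}$. Since $\tau_X$ is bounded between positive constants, the key sets are
$$
E_Q=\{\beta\in\mathcal I:\ d_{\mathbb P^1}(\bar w,\beta)<e^{-Q\height(\bar w)}\text{ for infinitely many }\bar w\},\qquad Q>0.
$$
Then $\{\Omega>0\}\subset\bigcup_{n}E_{1/n}$, and $\{\Omega=\infty\}=\bigcap_n E_n$.

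\textbf{Dimension zero.} For each fixed $Q>0$, I will show $\dim_H E_Q=0$ by the natural cover. For any $M$, the sets $B(\bar w,e^{-Q\height(\bar w)})$ with $\height(\bar w)\ge M$ cover $E_Q$. The number of $\bar w$ of height $h$ is $O(h)$. Hence for every $s>0$ the $s$-dimensional cost is
$$
\sum_{h\ge M}Ch\,e^{-sQh}\to0 .
$$
Thus $\{\Omega>0\}$, a countable union of such sets, has Hausdorff dimension $0$. This gives the dimension statements for $\mathcal F_1$, for $\mathcal F_A$ with $1<A<\infty$, and for $\mathcal I\setminus\mathcal F_\infty$. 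Full Lebesgue measure of $\mathcal F_\infty$ follows because dimension zero forces measure zero (equivalently, Borel--Cantelli with $\sum h e^{-Qh}<\infty$).

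\textbf{Category.} Set
$$
G_n=\bigcup_{\height(\bar w)\ge n}\bigl\{\beta\in\mathcal I: d_{\mathbb P^1}(\bar w,\beta)<e^{-n\height(\bar w)}\bigr\}.
$$
Each $G_n$ is open in $\mathcal I$, and it is dense because rationals of arbitrarily large height are dense. I claim $\bigcap_n G_n\subset\{\Omega=\infty\}$. For $\beta$ in the intersection, each $n$ supplies a $\bar w_n$ with $\height(\bar w_n)\ge n$ and $d(\bar w_n,\beta)<e^{-n\height(\bar w_n)}$. Hence $\bar w_n\to\beta$ and the ratio in \eqref{eq:Omega} is at least $n$. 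Conversely, $\{\Omega=\infty\}\subset\bigcap_n G_n$, since $\Omega=\infty$ supplies infinitely many $\bar w$ with ratio exceeding $n$, and these have unbounded height. So $\mathcal F_1=\bigcap_n G_n$ is a dense $G_\delta$. By Baire's theorem in the Polish space $\mathcal I$, its complement is meagre. That complement contains $\mathcal F_\infty$ and every $\mathcal F_A$ with $1<A<\infty$, so these sets are meagre.

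\textbf{Main obstacle.} The delicate point is just the bookkeeping in the category part: $G_n$ must be defined with the condition $\height(\bar w)\ge n$. Otherwise a single fixed rational would place every nearby $\beta$ in all the $G_n$. It also has to be checked that working relative to $\mathcal I$ keeps $G_n$ open and dense there.
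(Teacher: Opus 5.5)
Your proof is correct and follows the paper's argument essentially step for step. Density comes from Proposition~\ref{prop:prescribed-flatness-orders}; the problem is reduced to $\Omega$ via Theorem~\ref{thm:exact-irrational-flatness-order}; the same $O(h)$-classes-per-height cover shows $\dim_{\mathrm H}E_Q=0$; and $\{\Omega=\infty\}$ is identified as a dense $G_\delta$. The only differences are cosmetic. You use the single-index sets $G_n$ (height threshold and rate both equal to $n$) where the paper intersects the doubly indexed family $U_{m,N}$, and both give exactly $\{\Omega=\infty\}$. Also, Baire's theorem is not actually needed, since the complement of a dense $G_\delta$ is meagre in any space.
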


\begin{proof}
The density of every level follows from
Proposition~\ref{prop:prescribed-flatness-orders}.

For $\epsilon>0$, let
$$
E_\epsilon
=
\left\{
\theta\in\mathbb P^1(\mathbb R):
d_{\mathbb P^1}(\theta,\bar w)<e^{-\epsilon\height(\bar w)}
\text{ for infinitely many }\bar w\in\Prim/\{\pm1\}
\right\}.
$$
There are $O(h)$ projective primitive classes of height $h$. Hence, for every
$s>0$ and $N\geq1$, the tail of $E_\epsilon$ is covered by intervals with
total $s$-content at most
$$
C\sum_{h\geq N}h e^{-s\epsilon h},
$$
which tends to $0$ as $N\to\infty$. Thus $\dim_{\mathrm H}E_\epsilon=0$.
Since
$$
\{\beta\in\mathcal I:\Omega(\beta)>0\}
\subset
\bigcup_{m=1}^{\infty}E_{1/m},
$$
we get
$$
\dim_{\mathrm H}\{\beta\in\mathcal I:\Omega(\beta)>0\}=0.
$$
In particular, $\{\Omega=0\}$ has full Lebesgue measure and its complement has
Hausdorff dimension $0$.

Next we prove the category statement. For integers $m,N\geq1$, set
$$
U_{m,N}
=
\mathcal I
\cap
\bigcup_{\substack{\bar w\in\Prim/\{\pm1\}\\ \height(\bar w)\geq N}}
\left\{
\theta\in\mathbb P^1(\mathbb R):
d_{\mathbb P^1}(\theta,\bar w)<e^{-m\height(\bar w)}
\right\}.
$$
Each $U_{m,N}$ is open and dense in $\mathcal I$. Hence
$$
G=
\bigcap_{m=1}^{\infty}\bigcap_{N=1}^{\infty}U_{m,N}
$$
is dense $G_\delta$ in $\mathcal I$, and by construction
$
G=\{\beta\in\mathcal I:\Omega(\beta)=\infty\}.
$

The exact flatness formula (Theorem \ref{thm:exact-irrational-flatness-order}) gives
$$
\mathcal F_1=\{\Omega=\infty\},
\qquad
\mathcal F_\infty=\{\Omega=0\}.
$$
Therefore $\mathcal F_1$ is dense $G_\delta$ and has Hausdorff dimension $0$,
while $\mathcal F_\infty$ has full Lebesgue measure and complement of
Hausdorff dimension $0$. Since $\mathcal F_1$ is dense $G_\delta$, its
complement is meagre; hence every $\mathcal F_A$ with $A>1$ is meagre,
including $\mathcal F_\infty$. Finally, every finite level
$\mathcal F_A$, $1\leq A<\infty$, is contained in $\{\Omega>0\}$, so it has
Hausdorff dimension $0$.
\end{proof}

The exact flatness formula gives a rigidity consequence: a dense overlap of
finite flatness levels already determines the marked torus.

\begin{corollary}[Finite flatness levels determine the marked torus]
\label{cor:finite-flatness-level-determines-marked-torus}
Let $A,B\in(1,\infty)$, and let $X,Y$ be marked complete finite-area hyperbolic
once-punctured tori. If
$$
\mathcal F_A(X)\cap\mathcal F_B(Y)
$$
is dense in $\mathcal I$, then $A=B$,
$
\|\cdot\|_X=\|\cdot\|_Y,
$
and $X=Y$ as marked hyperbolic tori. In particular, for every fixed
$A\in(1,\infty)$, the level set $\mathcal F_A(X)$ determines the marked torus
$X$.
\end{corollary}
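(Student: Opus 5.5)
The plan is to use the exact flatness formula in Theorem~\ref{thm:exact-irrational-flatness-order}. Since $A,B\in(1,\infty)$, every $\beta\in\mathcal F_A(X)\cap\mathcal F_B(Y)$ has $0<\Omega(\beta)<\infty$. Here $\Omega(\beta)$ depends only on $\beta$ and the fixed integral structure of $H_1$, not on the torus. For such $\beta$ the formula gives
$$
A-1=\frac{\tau_X(\beta)}{\Omega(\beta)},\qquad B-1=\frac{\tau_Y(\beta)}{\Omega(\beta)}.
$$
Eliminating $\Omega(\beta)$ yields
$$
(B-1)\,\tau_X(\beta)=(A-1)\,\tau_Y(\beta)
$$
for every $\beta$ in a dense subset of $\mathcal I$, hence of $\mathbb P^1(\mathbb R)$. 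Both $\tau_X$ and $\tau_Y$ are continuous on $\mathbb P^1(\mathbb R)$, so the identity holds everywhere. Since $\tau_X(\theta)=\|v\|_X/\|v\|_\infty$, this means $\|\cdot\|_X=c\|\cdot\|_Y$ with $c=(A-1)/(B-1)>0$.

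The next step is to show $c=1$, using the integrality of the Fricke trace relation. For every primitive $w$ we have $\ell_X(\gamma_w)=c\,\ell_Y(\gamma_w)$. The trace labels $x_w=2\cosh(\ell/2)$ must satisfy Theorem~\ref{thm:fricke-bowditch} for both tori. The cleanest route is to exploit $x_{u+v}+x_{u-v}=x_ux_v$ along a fan $nu+v$. From the asymptotics in the proof of Proposition~\ref{prop:functional-estimate}, the lengths grow like $n\|u\|$, so both sides are consistent, and the information has to come from subleading terms. Concretely, Equation~\eqref{eq:Difference} gives
$$
\lambda_{w|u}(u)-\lambda_{w|-u}(u)=4\log\coth\bigl(\|w\|/2\bigr)
$$
for each torus. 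The one-sided functionals scale linearly with the norm, so
$$
4\log\coth\frac{\|w\|_X}{2}=c\cdot 4\log\coth\frac{\|w\|_Y}{2}=c\cdot 4\log\coth\frac{\|w\|_X}{2c}.
$$
Fix any single $w$ and write $L=\|w\|_X$. The function $c\mapsto c\log\coth(L/(2c))$ is strictly increasing in $c$; I would check this by differentiating, using $\log\coth(t)\sim 2e^{-2t}$. Its value at $c=1$ equals the left-hand side, so injectivity forces $c=1$. This is the step I expect to be the main obstacle: the monotonicity must be verified for all $L>0$, not just asymptotically. A fallback is to take $L\to\infty$ along a sequence of $w$: the left side behaves like $8e^{-L}$ and the right side like $8c\,e^{-L/c}$, and these agree for all large $L$ only if $c=1$.

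With $c=1$ we get $A=B$ and $\|\cdot\|_X=\|\cdot\|_Y$. Thus $X$ and $Y$ have the same marked simple length spectrum, in particular the same lengths of $(1,0)$, $(0,1)$ and $(1,1)$. By Fricke, the traces $(x_{(1,0)},x_{(0,1)},x_{(1,1)})$ are Fricke coordinates on the Teichmüller space of once-punctured tori. Hence $X=Y$ as marked tori. For the final sentence, apply the result with $Y$ any torus satisfying $\mathcal F_A(Y)=\mathcal F_A(X)$. This level set is dense by Proposition~\ref{prop:measure-category-dimension-flatness}, so the intersection $\mathcal F_A(X)\cap\mathcal F_A(Y)$ is dense and $X=Y$.
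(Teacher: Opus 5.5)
Your proof is correct. It matches the paper at the start and end: you get $\|\cdot\|_X=c\|\cdot\|_Y$ with $c=(A-1)/(B-1)$ from the exact flatness formula and continuity of $\tau_X,\tau_Y$, and you conclude $X=Y$ from the Fricke trace triple $(x_a,x_b,x_{a+b})$. You take a genuinely different route to ruling out $c\neq 1$.

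The paper uses McShane's identity $\sum_\gamma (1+e^{\ell(\gamma)})^{-1}=\tfrac12$ for both tori. If $\ell_X=c\,\ell_Y$, every term is strictly smaller when $c>1$ and strictly larger when $c<1$, a contradiction.

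You stay inside the paper's own fan machinery. Since $\partial_X(w)=c\,\partial_Y(w)$, the one-sided functionals satisfy $\lambda^X_{w|\pm u}=c\,\lambda^Y_{w|\pm u}$ for the same integral $u$ with $\det(w,u)=1$. Applying the corner identity \eqref{eq:Difference} to both tori then gives $\log\coth(L/2)=c\log\coth(L/(2c))$, where $L=\|w\|_X$.

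The monotonicity you flagged as the main obstacle is immediate, with no differentiation or asymptotics needed. Put $s=L/(2c)$. Then $c\log\coth\frac{L}{2c}=\frac{L}{2}\cdot\frac{\log\coth s}{s}$. Both $s\mapsto\log\coth s$ and $s\mapsto 1/s$ are positive and strictly decreasing on $(0,\infty)$, so the right-hand side is strictly increasing in $c$. Hence a single primitive $w$ already forces $c=1$. Your fallback via $L\to\infty$ is also valid, but unnecessary.

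As for the trade-off: the paper's argument is a one-liner, but it imports a deep global identity summing over all simple closed geodesics. Yours is self-contained, using only the Fricke relations already behind Proposition \ref{prop:rational-corner-atoms}. It also shows something more local: the exterior angle at any one rational corner already pins down the scale of the norm. Your handling of the ``in particular'' clause, via density of $\mathcal F_A(X)$ from Proposition \ref{prop:measure-category-dimension-flatness}, is correct.
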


\begin{proof}
For every $\beta\in\mathcal F_A(X)\cap\mathcal F_B(Y)$, the exact flatness
formula gives
$$
\tau_X(\beta)=(A-1)\Omega(\beta),
\qquad
\tau_Y(\beta)=(B-1)\Omega(\beta).
$$
Since $\Omega(\beta)$ depends only on the fixed integral coordinates, we get
$$
\tau_X(\beta)=c\tau_Y(\beta),
\qquad
c=\frac{A-1}{B-1},
$$
on a dense subset of $\mathcal I$. By continuity,
$$
\tau_X=c\tau_Y
$$
on $\mathbb P^1(\mathbb R)$, and hence
$$
\|\cdot\|_X=c\|\cdot\|_Y.
$$
Thus every simple closed geodesic $\gamma$ satisfies
$$
\ell_X(\gamma)=c\ell_Y(\gamma).
$$
McShane's identity \cite{mcshane1998simple} for once-punctured tori gives
$$
\sum_\gamma \frac{1}{1+e^{\ell_X(\gamma)}}=\frac12
=
\sum_\gamma \frac{1}{1+e^{\ell_Y(\gamma)}}.
$$
If $c>1$, every term in the first sum is smaller than the corresponding term
in the second; if $c<1$, every term is larger. Hence $c=1$. Therefore
$A=B$ and $\|\cdot\|_X=\|\cdot\|_Y$.

Finally, equality of the norm gives equality of the lengths of the three
marked curves represented by a basis $a,b$ and by $a+b$. Hence the positive
Fricke trace triples
$$
(x_a,x_b,x_{a+b})
$$
agree for $X$ and $Y$. The standard Fricke parametrization of marked cusped
once-punctured tori then gives $X=Y$.
\end{proof}
\begin{remark}
    In fact, it is enough that the closure of the overlap contains the vertices of a Farey triangle. In particular, density in any nonempty open interval of $\mathcal{I}$ is enough.
\end{remark}
\section{Further directions and generalizations}
\label{sec:further-directions}

We have focused on once-punctured tori to keep the normal-turn argument
transparent, but the same strategy may apply more broadly, for instance to
hyperbolic one-holed tori and to Markoff-type trace recursions. There are also several generalizations of the Markoff
equation in literature. For instance,
Gyoda--Matsushita \cite{GyodaMatsushita} studied
$$
x^2+y^2+z^2+k_1yz+k_2zx+k_3xy
=
(3+k_1+k_2+k_3)xyz,
$$
which contains the symmetric $k$-Markoff equation when
$k_1=k_2=k_3=k$. Related generalized Markoff numbers and orderings were studied
by Banaian--Sen and Banaian \cite{BanaianSen,BanaianOrderings}. For these generalized Markoff equations, our method suggests a
colored shifted analog, and the normal-turn method should provide the
polylogarithmic fiber bounds and a local-boundary picture.

A different direction is to move beyond the once-punctured torus. Let
$S=S_{g,n}$ be a finite-type surface with
$3g-3+n\geq1,$
and let $X$ be a hyperbolic structure on $S$. This includes the four-holed
sphere, as well as surfaces of higher complexity. Simple closed curves are then no
longer parametrized by a two-dimensional lattice. The natural replacement is
the length ball
$$
\{\lambda\in\mathcal{ML}(S):\ell_X(\lambda)\leq 1\},
$$
together with the integral multicurves and Thurston measure. This is the
setting of Mirzakhani's counting theorem and the work of
Erlandsson--Souto on geodesic currents
\cite{Mirzakhani,ErlandssonSoutoCounting,ErlandssonSoutoBook}. A useful fact is the convexity of hyperbolic length in train-track
coordinates, which follows from Mirzakhani's convexity theorem
\cite[Theorem~A.1]{MirzakhaniThesis}; see also
\cite{eskin2022effective}. This makes it possible to study equal-length curves
using higher-dimensional convex geometry. A first estimate based on Andrews's
theorem for lattice polytopes \cite{andrews1963lower} gives
$$
m_X(L)\leq C_XL^{D(D-1)/(D+1)},
$$
where
$
D=\dim\mathcal{ML}(S)=6g-6+2n.
$
In forthcoming work, we aim to extend these methods to obtain sharper bounds for simple-length multiplicities and to develop a local geometric description of the boundary of the length ball.

\bibliographystyle{alpha}
\bibliography{references}

\end{document}